\newtheorem{teor}{Theorem}[section]
\newtheorem{prop}[teor]{Proposition}
\newtheorem{cor}[teor]{Corollary}
\newtheorem{lema}[teor]{Lemma}
\theoremstyle{definition}
\newtheorem{defin}[teor]{Definition}
\theoremstyle{remark}
\newtheorem{remark}[teor]{Remarks}
\newcommand{\fin}{\hspace*{\fill} $\Box$}
\newcommand{\R}{\mathbb{R}}
\newcommand{\N}{\mathbb{N}}
\newcommand{\To}{\longrightarrow}
\newcommand{\U}{\mathscr{U}}
\def\Ext{\operatorname{Ext}}
\def\PO{\operatorname{PO}}
\def\PB{\operatorname{PB}}
\def\fin{\operatorname{fin}}
\newcommand{\aproof}{\begin{proof}}
\newcommand{\zproof}{\end{proof}}
\def\XiU{[X_i]_{\mathscr U}}
\def\Xii{(X_i)_{i\in I}}
\def\dens{\operatorname{dens}}
\def\dom{\operatorname{dom}}
\def\e{\varepsilon}
\def\N{\mathbb{N}}
\def\span{\operatorname{span}}
\def\R{\mathbb R}
\def\PB{\operatorname{PB}}
\def\ran{\operatorname{ran}}
\begin{document}

\title{On separably injective Banach spaces}

\author[A. Avil\'es, F. Cabello S\'anchez, J. M. F. Castillo, M. Gonz\'alez and Y.
Moreno]{Antonio Avil\'es, F\'elix Cabello S\'anchez, Jes\'us M. F.
Castillo, Manuel Gonz\'alez and Yolanda Moreno}

\address{Departamento de Matem\'aticas, Universidad de Murcia, 30100 Espinardo,
Murcia, Spain} \email{avileslo@um.es}

\address{Escuela de Ingenieros T\'ecnicos Industriales, Universidad de Extremadura, Avenida de Elvas s/n, 06071 Badajoz, Spain}
             \email{fcabello@unex.es}

\address{Departamento de Matem\'aticas, Universidad de Extremadura, Avenida de Elvas s/n, 06071 Badajoz, Spain}
             \email{castillo@unex.es}

\address{Departamento de Matem\'aticas, Universidad de Cantabria, Avenida los Castros s/n, 39071 Santander, Spain}
             \email{manuel.gonzalez@unican.es}

\address{Escuela Polit\'ecnica, Universidad de Extremadura, Avenida de la Universidad s/n, 10071 C\'aceres, Spain}
             \email{ymoreno@unex.es}



\thanks{2010 Class. subject: 46A22, 46B04, 46B08,
46B26.}
\thanks{The first author was supported by MEC and FEDER (Project
MTM2008-05396), Fundaci\'{o}n S\'{e}neca (Project 08848/PI/08) and
Ramon y Cajal contract (RYC-2008-02051). The research of the other
four authors has been supported in part by project MTM2010-20190}

\maketitle

\begin{abstract}In this paper we deal with two weaker forms of injectivity which
turn out to have a rich structure behind: separable injectivity
and universal separable injectivity.  We show several structural
and stability properties of these classes of Banach spaces. We
provide natural examples of (universally) separably injective
spaces, including $\mathcal L_\infty$ ultraproducts built over
countably incomplete ultrafilters, in spite of the fact that these
ultraproducts are never injective. We obtain two fundamental
characterizations of universally separably injective spaces: a) A
Banach space $E$ is universally separably injective if and only if
every separable subspace is contained in a copy of $\ell_\infty$
inside $E$. b) A Banach space $E$ is universally separably
injective if and only if for every separable space $S$ one has
$\Ext(\ell_\infty/S, E)=0$. The final Section of the paper focuses
on special properties of $1$-separably injective spaces.
Lindenstrauss\ obtained in the middle sixties a result that can be
understood as a proof that, under the continuum hypothesis,
$1$-separably injective spaces are $1$-universally separably
injective; he left open the question in {\sf ZFC}. We construct a
consistent example of a Banach space of type $C(K)$ which is
$1$-separably injective but not $1$-universally separably
injective.
\end{abstract}

\section{Introduction}\label{ch:preliminaires}

A Banach space $E$ is said to be injective if for every Banach
space $X$ and every subspace $Y$ of $X$, each operator $t \colon Y
\to E$ admits an extension $T\colon X \to E$. And $E$ is said to
be $\lambda$-injective if, besides, $T$ can be chosen so that
 $\|T\|\leq \lambda \|t\|$. The space $\ell_infty$ is the best known example of 1-injective
space. The two basic facts about injective spaces are that
$1$-injective spaces are isometric to $C(K)$-spaces with $K$
extremely disconnected and that is not known if all injective
spaces are isomorphic to
$1$-injective spaces.\\

In this paper we deal with two weaker forms of injectivity which
turn out to have a rich structure behind: separable injectivity
and universal separable injectivity. A Banach space $E$ is said to
be separably injective if it satisfies the extension property in
the definition of injective spaces under the restriction that $X$
is separable; and it is said to be universally separably injective
if it satisfies the extension property when $Y$ is separable.
Obviously, injective spaces are universally separably injective
and these, in turn, are separably injective; the converse
implications fail.\\

The basic structural and stability properties of these classes are
studied in Section 3: we will show that infinite-dimensional
separably injective spaces are $\mathcal{L}_\infty$-spaces,
contain $c_0$ and have Pe\l czy\'nski's property $(V)$.
Universally separably injective spaces, moreover, are Grothendieck
spaces, contain $\ell_\infty$ and enjoy Rosenthal's property
$(V)$. In Section 4 we provide natural examples of (universally)
separably injective spaces, including the remarkable fact that
ultraproducts built over countably incomplete ultrafilters are
universally separably injective as long as they are $\mathcal
L_\infty$-spaces, in spite of the fact that they are never
injective. In Section 5 we obtain two fundamental
characterizations of universally separably injective spaces: a) A
Banach space $E$ is universally separably injective if and only if
every separable subspace is contained in a copy of $\ell_\infty$
inside $E$. b) A Banach space $E$ is universally separably
injective if and only if for every separable space $S$ one has
$\Ext(\ell_\infty/S, E)=0$; i.e., $E$ is complemented in any
superspace $Z$ such that $Z/E = \ell_\infty/S$. Characterization
(a) allows to prove that universal separable injectivity is a
$3$-space property, which provides many new examples of spaces
with this property. Characterization (b)leads to the result
$\Ext(\ell_\infty/c_0,\ell_\infty/c_0)=0$, which provides a new
unexpected solution for equation $\Ext(X,X)=0$. The final Section
6 focuses on special properties of $1$-separably injective spaces.
This is the point in which set theory axioms enter the game.
Indeed, Lindenstrauss\ obtained in the middle sixties what can be
understood as a proof that, under the continuum hypothesis,
$1$-separably injective spaces are $1$-universally separably
injective; he left open the question in {\sf ZFC}. We construct a
consistent example of a Banach space of type $C(K)$ which is
$1$-separably injective but not $1$-universally separably
injective.

\section{Background}

Our notation is fairly standard, as in \cite{lindtzaf}. Given a
set $\Gamma$ we denote by $\ell_\infty(\Gamma)$ the space of all
bounded scalar functions on $\Gamma$, endowed with the sup norm
and $c_0(\Gamma)$ is the closed subspace spanned by the
characteristic functions of the singletons of $\Gamma$. A Banach
space $X$ is said to be an $\mathcal{L}_{p,\lambda}$-space (with
$1\leq p \leq \infty$ and $\lambda \geq 1$) if every finite
dimensional subspace $F$ of $X$ is contained in another finite
dimensional subspace of $X$ whose Banach-Mazur distance to the
corresponding $\ell_p^n$ is at most $\lambda$. A space $X$ is said
to be a $\mathcal{L}_p$-space if it is a
$\mathcal{L}_{p,\lambda}$-space for some $\lambda \geq 1$; we will
say that it is a $\mathcal{L}_{p,\lambda+}$-space when it is a
$\mathcal{L}_{p,\lambda'}$-space for all $\lambda'>\lambda$. We
are especially interested in $\mathcal L_\infty$ spaces. A
Lindenstrauss\ space is one whose dual is isometric to $L_1(\mu)$
for some measure $\mu$. Lindenstrauss\ spaces and
$\mathcal{L}_{\infty,1+}$-spaces are identical classes. Throughout
the paper, {\sf ZFC} denotes the usual setting of set theory with
the Axiom of Choice, while {\sf CH} denotes the continuum
hpothesis.

\subsection{The push-out and pull-back constructions} The push-out construction appears naturally when one
considers a couple of operators defined on the same space, in
particular in any extension problem. Let us explain why. Given
operators $\alpha:Y\to A$ and $\beta:Y\to B$, the associated
push-out diagram is
\begin{equation}\label{po-dia}
\begin{CD}
Y@>\alpha>> A\\
@V \beta VV @VV \beta' V\\
B @> \alpha' >> \PO
\end{CD}
\end{equation}
Here, the push-out space $\PO=\PO(\alpha,\beta)$ is quotient of
the direct sum $A\oplus_1 B$ (with the sum norm) by the closure of
the subspace $\Delta=\{(\alpha y,-\beta y): y\in Y\}$. The map
$\alpha'$ is given by the inclusion of $B$ into $A\oplus_1 B$
followed by the natural quotient map $A\oplus_1 B\to (A\oplus_1
B)/\overline\Delta$, so that $\alpha'(b)=(0,b)+\overline\Delta$
and, analogously, $\beta'(a)=(a,0)+\overline\Delta$.

The diagram (\ref{po-dia}) is commutative:
$\beta'\alpha=\alpha'\beta$. Moreover, it is `minimal' in the
sense of having the following universal property: if $\beta'':A\to
C$ and $\alpha'':B\to C$ are operators such that
$\beta''\alpha=\alpha''\beta$, then there is a unique operator
$\gamma:\PO\to C$ such that $\alpha''=\gamma\alpha'$ and
$\beta''=\gamma\beta'$. Clearly, $\gamma((a, b) +
\overline{\Delta}) = \beta''(a)+\alpha''(b)$ and one has
$\|\gamma\|\leq \max \{\|\alpha''\|, \|\beta''\|\}$.

Regarding the behaviour of the maps in Diagram~\ref{po-dia}, apart
from the obvious fact that both $\alpha'$ and $\beta'$ are
contractive, we have:

\begin{lema}\label{isom}$\;$
\begin{itemize}
\item[(a)] $\max \|\alpha'\|\, \|\beta'\| \leq 1$. \item[(b)] If
$\alpha$ is an isomorphic embedding, then $\Delta$ is closed.
\item[(c)] If $\alpha$ is an isometric embedding and
$\|\beta\|\leq 1$ then $\alpha'$ is an isometric embedding.
\item[(d)] If $\alpha$ is an isomorphic embedding then $\alpha'$
is an isomorphic embedding. \item[(e)] If $\|\beta\|\leq 1$ and
$\alpha$ is an isomorphism then $\alpha'$ is an isomorphism and
$$\|(\alpha')^{-1}\|\leq \max \{1, \|\alpha\|\}.$$
\end{itemize}
\end{lema}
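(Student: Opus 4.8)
The plan is to reduce all five assertions to one explicit formula for the quotient norm on $\PO$. Since the distance from a point to a subspace equals its distance to the closure of that subspace, for every $(a,b)\in A\oplus_1 B$ one has
\[
\|(a,b)+\overline\Delta\|_{\PO}=\d\big((a,b),\Delta\big)=\inf_{y\in Y}\big(\|a-\alpha y\|+\|b+\beta y\|\big),
\]
and in particular $\|\alpha'(b)\|=\inf_{y\in Y}\big(\|\alpha y\|+\|b+\beta y\|\big)$ and $\|\beta'(a)\|=\inf_{y\in Y}\big(\|a-\alpha y\|+\|\beta y\|\big)$. Assertion (a) then drops out: taking $y=0$ in these infima (equivalently, noting that $b\mapsto(0,b)$ and $a\mapsto(a,0)$ are isometric into $A\oplus_1 B$ while the quotient map is contractive) gives $\|\alpha'\|,\|\beta'\|\le 1$.

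For (b), pick $c>0$ with $\|\alpha y\|\ge c\|y\|$ for all $y$. If $(\alpha y_n,-\beta y_n)$ converges in $A\oplus_1 B$, then $(\alpha y_n)$ is Cauchy, so $(y_n)$ is Cauchy by the lower estimate and $y_n\to y$ for some $y\in Y$; continuity of $\alpha$ and $\beta$ forces the limit to be $(\alpha y,-\beta y)\in\Delta$, so $\Delta$ is closed (hence $\overline\Delta=\Delta$ whenever $\alpha$ is an embedding). For (c), use the formula for $\|\alpha'(b)\|$: since $\alpha$ is isometric $\|\alpha y\|=\|y\|$, and since $\|\beta\|\le 1$ we have $\|b+\beta y\|\ge\|b\|-\|y\|$; thus $\|\alpha y\|+\|b+\beta y\|\ge\|b\|$ for every $y$, so $\|\alpha'(b)\|\ge\|b\|$, which together with (a) gives equality.

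Assertion (d) follows from the same formula by a crude two-case estimate. If $\beta=0$ the infimum equals $\|b\|$ and $\alpha'$ is isometric. If $\beta\ne 0$, fix $b$ and split according to whether $\|y\|\le\|b\|/(2\|\beta\|)$, in which case $\|b+\beta y\|\ge\|b\|/2$, or $\|y\|>\|b\|/(2\|\beta\|)$, in which case $\|\alpha y\|\ge c\|y\|>c\|b\|/(2\|\beta\|)$; either way $\|\alpha y\|+\|b+\beta y\|\ge\min\{1/2,\,c/(2\|\beta\|)\}\,\|b\|$, so $\alpha'$ is bounded below, i.e.\ an isomorphic embedding. (One can also reduce (d) to (c): rescaling $\beta$ by a positive scalar, and re-norming $A$ by the equivalent norm $|a|=\inf_{y\in Y}\big(\|y\|+c^{-1}\|a-\alpha y\|\big)$ for which $\alpha$ becomes isometric, changes $\PO$ and the map $\alpha'$ only up to an intertwining isomorphism.)

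For (e), assume $\|\beta\|\le 1$ and $\alpha$ invertible. Surjectivity of $\alpha'$ is immediate: given $(a,b)+\overline\Delta$, put $y=\alpha^{-1}a$ and subtract $(\alpha y,-\beta y)\in\Delta$ to get $(a,b)+\overline\Delta=(0,b+\beta y)+\overline\Delta=\alpha'(b+\beta y)$. For the estimate, for every $y$ one has $\|b\|\le\|b+\beta y\|+\|\beta y\|\le\|b+\beta y\|+\|y\|\le\|b+\beta y\|+\|\alpha^{-1}\|\,\|\alpha y\|\le\max\{1,\|\alpha^{-1}\|\}\big(\|\alpha y\|+\|b+\beta y\|\big)$, hence $\|\alpha'(b)\|\ge\|b\|/\max\{1,\|\alpha^{-1}\|\}$ and therefore $\|(\alpha')^{-1}\|\le\max\{1,\|\alpha^{-1}\|\}$. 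I do not expect a genuine obstacle anywhere; the only points that need a little care are the passage from $\overline\Delta$ to $\Delta$ in the norm formula (legitimate because distance to a set equals distance to its closure) and, in (d), checking that the lower bound for $\alpha'$ is uniform in $b$ once $\beta\ne 0$.
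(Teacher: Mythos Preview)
Your proof is correct and follows essentially the same route as the paper: both rely on the explicit quotient-norm formula $\|(a,b)+\overline\Delta\|=\inf_{y}\big(\|a-\alpha y\|+\|b+\beta y\|\big)$, and your chains of inequalities in (c) and (e) are the same ones the paper uses (in (e) the paper, like you, actually obtains the bound $\max\{1,\|\alpha^{-1}\|\}$, so the $\|\alpha\|$ in the printed statement appears to be a typo). The only place you differ is (d): the paper merely says ``clear after (c)'', whereas you supply both a direct two-case lower bound and an explicit renorming reduction---this is more honest, since the passage from (c) to (d) is not entirely automatic without one of those arguments.
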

\begin{proof} (a) and (b) are clear. (c) If $\|\beta\|\leq 1$,
$$
\|\alpha'(b)\| = \|(0,b)+ \Delta\| = \inf_{y\in Y}  \|\alpha y\|+
\|b - \beta y\|  \geq  \inf_{y} \|\beta y\|+ \|b - \beta y\| \geq
\| b\|,
$$
as required. (d) is clear after (c). (e) To prove the assertion
about $(\alpha')^{-1}$, notice that for all $a\in A$ and $b\in B$
one has $(a,b) + \delta = (0, b+\beta y) + \delta$ for $y\in Y$
such that $\alpha y=a$. Therefore, for all $y'\in Y$ one has
\begin{eqnarray*} \|b +
\beta y \| &\leq & \|b + \beta y + \beta y'\| + \|\beta y'\| \\
&\leq& |b + \beta y + \beta y'\| + \|y'\| \\ &\leq& \|b + \beta y
+ \beta y'\| + \|\alpha^{-1}\| \|\alpha y'\|\end{eqnarray*} from
where the assertion follows.
\end{proof}

The pull-back construction is the dual of that of push-out in the
sense of categories, that is, ``reversing arrows''. Indeed, let
$\alpha:A\to Z$ and $\beta:B\to Z$ be operators acting between
Banach spaces. The associated pull-back diagram is
\begin{equation}\label{pb-dia}
\begin{CD}
\PB@>{'}\!\beta>> A\\
@V {'}\!\alpha VV @VV \alpha V\\
B @> \beta >> Z
\end{CD}
\end{equation}
Here, the pull-back space is $\PB=\PB(\alpha,\beta)=\{(a,b)\in
A\times B: \alpha(a)=\beta(b) \}$, where $A\times B$ carries the
sup norm. The arrows after primes are the restriction of the
projections onto the corresponding factor. Needless to say
(\ref{pb-dia}) is minimally commutative in the sense that if the
operators ${''}\!\beta: C\to A$ and ${''}\!\alpha: C\to B$ satisfy
$\alpha\circ {''}\! \beta=\beta\circ {''}\!\alpha$, then there is
a unique operator $\gamma:C\to\PB$ such that
${''}\!\beta={'}\!\beta \gamma$ and ${''}\!\beta={'}\!\beta
\gamma$. Clearly, $\gamma(c)=({''}\!\beta(c), {''}\!\alpha(c))$
and  $\|\gamma\|\leq \max \{\|{''}\!\alpha\|, \|{''}\!\beta\|\}$.

Quite clearly ${'}\!\alpha$ is onto if $\alpha$ is.\\

\subsection{Exact sequences} A short exact sequence of Banach spaces is a diagram
\begin{equation}\label{SEX}
0 \To Y \stackrel{\imath}\To X \stackrel{\pi}\To Z \To 0
\end{equation}
where $Y$, $X$ and $Z$ are Banach spaces and the arrows are
operators in such a way that the kernel of each arrow coincides
with the image of the preceding one. By the open mapping theorem
$\imath$ embeds $Y$ as a closed subspace of $X$ and $Z$ is
isomorphic to the quotient $X/\imath(Y)$.

We say that $0 \to Y \to X_1 \to Z\to0$ is equivalent to
(\ref{SEX}) if there exists an operator $t: X \to X_1$ making
commutative the diagram
\begin{equation}\label{equiv}
\begin{CD}
0  @>>>  Y @>>> X @>>> Z @>>>0 \\
  &  &   @|  @VVtV  @|
   & \\
0  @>>>  Y @>>> X_1 @>>> Z @>>>0. \\
\end{CD}
\end{equation}
This is a true equivalence relation in view of the classical
`three-lemma' asserting that in a commutative diagram of vector
spaces and linear maps $$
\begin{CD}
0  @>>>  Y @>>> X @>>> Z @>>>0 \\
  &     &@V{u}VV       @V{t}VV  @VV{v}V \\
0  @>>>  Y_1 @>>> X_1 @>>> Z_1 @>>>0.
\end{CD}
$$
having exact rows, if $u$ and $v$ are surjective (resp.,
injective) then so is $t$. Hence, the operator $t$ in Diagram
\ref{equiv} is a bijection and so it is a linear homeomorphism,
according to the open mapping theorem.

The sequence (\ref{SEX}) is said to be trivial if it is equivalent
to the direct sum sequence $0\to Y\to Y\oplus Z\to Z\to 0$. This
happens if and only if it splits, that is, there is an operator
$p:X\to Y$ such that $p\imath={\bf 1}_Y$ ($\imath(Y)$ is
complemented in $X$); equivalently, there is an operator $s:Z\to
X$ such that $\pi s={\bf 1}_Z$.

For every pair of Banach spaces $Z$ and $Y$, we denote by
$\Ext(Z,Y)$ the space of all exact sequences $0 \to Y \to X \to
Z\to 0$ modulo equivalence. We write $\Ext(Z,Y)=0$ to indicate
that every sequence of the form (\ref{SEX}) is trivial. The reason
for this notation is that $\Ext(Z,Y)$ has a natural linear
structure \cite{cabecastlong,castgonz} for which the (class of
the) trivial extension is the zero element.

Suppose we are given an extension (\ref{SEX}) and an operator $t:
Y\to B$. Consider the push-out of the couple $(\imath, t)$ and
draw the corresponding arrows:
\begin{equation*}
\begin{CD}
0  @>>>  Y @>\imath>> X @>>> Z @>>>0 \\
  &  &   @Vt VV  @VVt'V
   & \\
& &B @>\imath'>> \PO  \\
\end{CD}
\end{equation*}
By Lemma~\ref{isom}(a), $\imath'$ is an isomorphic embedding. Now,
the operator $\pi:X\to Z$ and the null operator $n:B\to Z$ satisfy
the identity $\pi\imath=nt=0$, and the universal property of the
push-out gives a unique operator $\varpi: \PO\to Z$ making the
following diagram commutative:
\begin{equation}\label{po-seq}
\begin{CD}
0  @>>>  Y @>\imath>> X @>\pi>> Z @>>>0 \\
  &  &   @Vt VV  @VVt'V
   @| \\
0  @>>> B @>\imath'>> \PO  @>\varpi >> Z@>>> 0
\end{CD}
\end{equation}
Or else, just take $\varpi((x,b)+\Delta)=\pi(x)$, check
commutativity, and discard everything but the definition of $\PO$.

Elementary considerations show that the lower sequence in the
preceding Diagram is exact. That sequence will we referred to as
the push-out sequence. Actually,  the universal property of the
push-out makes this diagram unique, in the sense that for any
other commutative diagram of exact sequences
\begin{equation*}
\begin{CD}
0  @>>>  Y @>\imath>> X @>\pi>> Z @>>>0 \\
  &  &   @Vt VV  @VVV
   @| \\
0  @>>> B @>>> X'  @>>> Z@>>> 0
\end{CD}
\end{equation*}
the lower row turns out to be equivalent to the push-out sequence
in (\ref{po-seq}). For this reason we usually refer to a diagram
like that as a push-out diagram. The universal property  of the
push-out diagram immediately yields

\begin{lema}\label{crit-po} With the above notations,  the push-out sequence splits if and only if $t$ extends to $X$, that is, there is an operator $T:X\to B$ such that $T\imath=t$.
\end{lema}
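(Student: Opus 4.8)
The plan is to use the universal property of the push-out diagram together with the splitting criterion for exact sequences.

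Suppose first that the push-out sequence splits. Then there is an operator $r\colon \PO\to B$ with $r\imath'={\bf 1}_B$. I would check that $T:=r t'\colon X\to B$ does the job: from the commutativity relation $t'\imath=\imath' t$ in Diagram~\ref{po-seq} one gets $T\imath = r t'\imath = r\imath' t = t$, so $t$ extends to $X$.

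Conversely, suppose there is $T\colon X\to B$ with $T\imath=t$. The idea is to feed the pair of operators $(T,{\bf 1}_B)$ into the universal property of the push-out of $(\imath,t)$: since $T\imath={\bf 1}_B\circ t$, there is a unique operator $r\colon\PO\to B$ with $rt'=T$ and $r\imath'={\bf 1}_B$. The second identity says precisely that $r$ is a retraction onto $B$, so the push-out sequence splits. Alternatively, and more concretely, one can avoid invoking the universal property and simply define $r((x,b)+\overline\Delta)=b-T(x)$; I would verify that this is well defined on the quotient (because $T\imath=t$ kills $\Delta$), bounded, and satisfies $r\imath'={\bf 1}_B$, which again gives the splitting.

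There is no real obstacle here; the only point that needs a line of care is checking that the map $r$ in the converse direction is well defined on $\PO=(B\oplus_1 X)/\overline\Delta$, i.e.\ that it vanishes on $\overline\Delta=\overline{\{(t y,-\imath y):y\in Y\}}$, which follows from $T\imath=t$ and continuity. Everything else is a direct application of Lemma~\ref{crit-po}'s hypotheses and the definition of split exact sequence given in the Background section.
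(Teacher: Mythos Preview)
Your argument via the universal property is correct and is exactly what the paper has in mind: the paper does not write out a proof of this lemma at all, merely stating that ``the universal property of the push-out diagram immediately yields'' it, and your two directions unpack precisely that remark.

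One small bookkeeping slip in your alternative explicit construction: with the paper's conventions (push-out of $\imath:Y\to X$ and $t:Y\to B$, so $\PO=(X\oplus_1 B)/\overline\Delta$ with $\Delta=\{(\imath y,-ty):y\in Y\}$ and $\imath'(b)=(0,b)+\overline\Delta$), the correct retraction is $r((x,b)+\overline\Delta)=b+T(x)$, not $b-T(x)$; with your minus sign one gets $r(\imath y,-ty)=-2ty$, so the map is not well defined. Your parenthetical description of $\Delta$ as $\{(ty,-\imath y)\}$ also swaps the roles of the two factors. These are cosmetic issues that do not affect the validity of the main argument.
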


Proceeding dually one obtains the pull-back sequence. Consider
again (\ref{SEX}) and an operator $u:A\to Z$. Let us form the
pull-back diagram of the couple $(\pi, u)$ thus:
\begin{equation*}
\begin{CD}
0  @>>>  Y @>\imath>> X @>\pi>> Z @>>>0 \\
  &  & & &  @A {'}\!u AA  @AA u A
    \\
&& & & \PB  @> {'}\!\pi >> A
\end{CD}
\end{equation*}
Recalling that ${'}\!\pi$ is onto and taking
$\jmath(y)=(0,\imath(y))$, it is easily seen that the following
diagram is commutative:
\begin{equation}\label{pb-seq}
\begin{CD}
0  @>>>  Y @>\imath>> X @>\pi>> Z @>>>0 \\
  &  & @|  @A{'}\!u AA  @AA u A
    \\
0@>>> Y@>\jmath >> \PB  @>{'}\!\pi >> A@>>>0\\
\end{CD}
\end{equation}
The lower sequence is  exact, and we shall referred to it as the
pull-back sequence. The splitting criterion is now as follows.

\begin{lema}\label{crit-po} With the above notations,  the pull-back sequence splits if and only if $u$ lifts to $X$, that is, there is an operator $L:A\to X$ such that $\pi L=u$.
\end{lema}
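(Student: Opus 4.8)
The plan is to mimic, in dual form, the proof of the push-out criterion (Lemma \ref{crit-po} above), using the universal property of the pull-back exactly as the push-out criterion used the universal property of the push-out. Concretely, I would prove the two implications separately.

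\emph{Suppose first that the pull-back sequence in (\ref{pb-seq}) splits.} Then there is an operator $s\colon A\to \PB$ with ${'}\!\pi\, s={\bf 1}_A$. Composing with ${'}\!u\colon\PB\to X$ and setting $L={'}\!u\, s\colon A\to X$, the commutativity $\pi\, {'}\!u = u\, {'}\!\pi$ of Diagram (\ref{pb-seq}) gives $\pi L=\pi\, {'}\!u\, s = u\, {'}\!\pi\, s = u$, so $u$ lifts to $X$.

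\emph{Conversely, suppose $u$ lifts, i.e.\ there is $L\colon A\to X$ with $\pi L=u$.} The point is that the pair of operators ${\bf 1}_A\colon A\to A$ and $L\colon A\to X$ satisfies the compatibility condition needed to invoke the universal property of the pull-back $\PB=\PB(\pi,u)$: indeed $u\circ{\bf 1}_A = u = \pi\circ L$, which is precisely $\alpha\circ{''}\!\beta = \beta\circ{''}\!\alpha$ in the notation of Diagram (\ref{pb-dia}) (with $\alpha=\pi$, $\beta=u$, ${''}\!\beta=L$, ${''}\!\alpha={\bf 1}_A$). Hence there is a (unique) operator $s\colon A\to\PB$ with ${'}\!\pi\, s={\bf 1}_A$ (and ${'}\!u\, s=L$); explicitly $s(a)=(L(a),a)$, which indeed lands in $\PB$ because $\pi(L(a))=u(a)$. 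The identity ${'}\!\pi\, s={\bf 1}_A$ says exactly that $s$ is a right inverse for the quotient map ${'}\!\pi$ in the lower row of (\ref{pb-seq}), so that sequence splits.

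There is no real obstacle here: the only thing to be slightly careful about is checking that the candidate operator $s(a)=(L(a),a)$ is bounded and genuinely takes values in $\PB\subseteq A\times B$ (immediate from $\pi L=u$ and the fact that $A\times B$ carries the sup norm), and that a right inverse of the quotient map is the same thing as a splitting of the exact sequence — which was recorded in the discussion of trivial sequences following (\ref{SEX}). One may alternatively finish the converse direction in one line by noting that the universal property makes the lower row of (\ref{pb-seq}) unique, and then quoting the push-out criterion "dually"; but the explicit construction above is just as short and entirely self-contained.
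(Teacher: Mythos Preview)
Your proof is correct. The paper does not actually supply a proof of this lemma: it is stated without argument, immediately after the sentence ``The splitting criterion is now as follows,'' as a routine dualization of the push-out criterion. Your explicit argument via the universal property (and the concrete section $s(a)=(L(a),a)$) is exactly the standard verification one would give. One tiny notational slip: you write ``$\PB\subseteq A\times B$'', but in the specific situation of diagram~(\ref{pb-seq}) the ambient product is $X\times A$ (the generic $A,B$ of diagram~(\ref{pb-dia}) have been instantiated as $X$ and $A$); the mathematics is unaffected.
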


Given an exact sequence $0 \To Y \stackrel{\imath}\To X
\stackrel{\pi}\To Z \To 0$ and another Banach space $B$, taking
operators with values in $B$ one gets the exact sequence
$$
0 \To \mathfrak L(Z,B)  \stackrel{\pi^*}\To \mathfrak  L(X,B)
\stackrel{\imath^*}\To \mathfrak  L(Y,B)$$ (in which $\varkappa^*$
means composition with $\varkappa$ on the right) that can be
continued to form a ``long exact sequence"
$$
0 \To \mathfrak  L(Z,B)  \stackrel{\pi^*}\To \mathfrak  L(X,B)
\stackrel{\imath^*}\To \mathfrak  L(Y,B) \stackrel{\beta}\To
\Ext(Z,B)\To \Ext(X,B)\To \Ext(Y,B)
$$
A detailed description of homology sequences can be seen in
\cite{cabecastlong}. Here we just indicate the action of $\beta$:
it takes $t\in \mathfrak  L(Y,B)$ into (the class in $\Ext(Z,B)$
of) the lower extension of the push-out diagram  (\ref{po-seq}).

\section{Basic properties of (universally) separably injective
spaces}

\begin{defin}
A Banach space $E$ is separably injective if
for every separable Banach space $X$ and each subspace $Y\subset
X$, every operator $t:Y\to E$ extends to an operator $T:X\to
E$.  If some extension $T$ exists with $\|T\|\leq\lambda\|t\|$ we say that $E$ is $\lambda$-separably
injective.
\end{defin}

It is easy to check that every separably injective space $E$ is
$\lambda$-separably injective for some $\lambda$ since every
sequence of norm-one operators $t_n: Y_n \to E$ induces a norm-one
operator $t:\ell_1(Y_n) \to E$. Separable injective spaces can be
characterized as follows.

\begin{prop}\label{sepinj-char}
For a Banach space $E$ the following properties are equivalent.
\begin{enumerate}
\item[(a)] $E$ is separably injective. \item[(b)] Every operator
from a subspace of $\ell_1$ into $E$ extends to $\ell_1$.
\item[(c)] For every Banach space $X$ and each subspace $Y$ such
that $X/Y$ is separable, every operator $t:Y\to E$ extends to $X$.
\item[(d)] If $X$ is a Banach space containing $E$ and $X/E$ is
separable, then $E$ is complemented in $X$. \item[(e)] For every
separable space $S$ one has $\Ext(S, E)=0$.
\end{enumerate}
Moreover, \begin{enumerate} \item The space $E$ is
$\lambda$-complemented in every $Z$ such that $Z/E$ is separable
if and only if every operator $t: Y \to E$ admits an extension
${T} : X \to E$ with $\|{T}\|\leq \lambda \|t\|$, whenever $X/Y$
is separable.

\item If $E$ is $\lambda$-separably injective, then for every
operator $t: Y \to E$ there exists an extension ${T} : X \to E$ of
$T$ with $\|{T}\|\leq 3 \lambda \|t\|$, whenever $X/Y$ is
separable.
\end{enumerate}
\end{prop}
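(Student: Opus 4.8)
The plan is to prove the chain of equivalences $(a)\Leftrightarrow(b)\Leftrightarrow(c)\Leftrightarrow(d)\Leftrightarrow(e)$ together with the quantitative refinements (1) and (2), exploiting the push-out and pull-back machinery from Section~2. Several implications are immediate: $(c)\Rightarrow(a)$ and $(c)\Rightarrow(d)$ are trivial (take $X$ separable in the first case, and in the second take $t=\mathbf 1_E$, so that an extension $T\colon X\to E$ is exactly a projection); $(d)\Rightarrow(e)$ is the splitting criterion for exact sequences, since if $0\to E\to X\to S\to 0$ is exact with $S$ separable then $X/E=S$ is separable, whence $E$ is complemented in $X$ and the sequence is trivial; and $(e)\Rightarrow(d)$ is the converse reading of the same criterion. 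The heart of the matter is therefore to go from the ``small ambient space'' hypothesis in $(a)$ (or from the $\ell_1$-version $(b)$) to the ``small quotient'' conclusion in $(c)$.

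First I would record the equivalence $(a)\Leftrightarrow(b)$: one direction is a special case, and for $(b)\Rightarrow(a)$ one uses that every separable $X$ is a quotient of $\ell_1$, say $q\colon\ell_1\to X$, and every subspace $Y\subset X$ pulls back to $q^{-1}(Y)\subset\ell_1$; extending $t\circ q|_{q^{-1}(Y)}$ to $\ell_1$ and checking it factors through $q$ gives the extension of $t$. Next, the key step $(a)\Rightarrow(c)$. Given $Y\subset X$ with $X/Y$ separable and $t\colon Y\to E$, I would form the push-out sequence (\ref{po-seq}): pushing out $0\to Y\to X\to X/Y\to 0$ along $t$ yields a commutative diagram with bottom row $0\to E\to\mathrm{PO}\to X/Y\to 0$, and by Lemma~\ref{crit-po} the desired extension $T$ exists iff this bottom sequence splits. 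Since $X/Y$ is separable, the ambient space $\mathrm{PO}$ has the property that $\mathrm{PO}/E=X/Y$ is separable; but separable injectivity as stated in $(a)$ only directly handles \emph{separable} ambient spaces, so I must separately reduce the splitting of $0\to E\to\mathrm{PO}\to S\to 0$ (with $S$ separable) to a statement about operators on separable spaces. To do this, pick a separable subspace $W\subset\mathrm{PO}$ mapping onto $S$ (lift a dense sequence), let $V=W\cap E$, so $W/V$ embeds as a finite-codimensional-in-$S$... actually as a dense-range copy of $S$; then $0\to V\to W\to S'\to 0$ with $S'\subset S$ separable and $W$ separable. Applying $(a)$ to the inclusion $V\hookrightarrow E$ with ambient space $W$ gives a projection of $W$ onto $V$, equivalently a retraction; a standard diagram-chase (the push-out of this smaller sequence along $V\hookrightarrow E$ maps to the big sequence) then produces a splitting of the big sequence. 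This is the step I expect to be the main obstacle, since one must be careful that the separable ``slice'' $W$ genuinely witnesses the non-splitting, and keep track of norms.

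For the quantitative statements: part (1) is the isometric version of $(d)\Leftrightarrow(c')$ and falls out of the same push-out argument once one notes that in Lemma~\ref{isom}(a)(c) the map $E\to\mathrm{PO}$ is isometric when $\|t\|\le 1$, so a $\lambda$-bounded projection on $\mathrm{PO}$ pulls back to a $\lambda$-bounded extension $T$, with the reverse direction using the universal property and $\|\gamma\|\le\max\{\|\alpha''\|,\|\beta''\|\}$. Part (2) is where the factor $3$ appears: given a $\lambda$-separably injective $E$ and $t\colon Y\to E$ with $X/Y$ separable, one runs the push-out construction to get $0\to E\to\mathrm{PO}\to S\to 0$ with $S$ separable, but now $E$ sits \emph{$\lambda$-}separably injectively rather than $1$-, so the projection $P\colon\mathrm{PO}\to E$ one extracts has $\|P\|\le\lambda$ only after first extending $\mathbf 1_E$ from a norm-one copy of $E$; combined with the constant $1$ loss from the quotient-norm estimate on $\mathrm{PO}$ (the section $S\to\mathrm{PO}$ can be chosen with norm $\le 1+\eps$, contributing an extra factor into $\|T\|\le\|P\|\,\|$inclusion of $X\to \mathrm{PO}\|+\dots$) one arrives at the bound $3\lambda$; I would make the bookkeeping explicit by writing $T$ as $P\circ t'$ where $t'\colon X\to\mathrm{PO}$ is the natural map of norm $\le 1$ from the push-out diagram (\ref{po-seq}) and estimating $\|P\|$ via the two-step extension, $\|P\|\le\lambda\cdot(\text{norm of the identification})\le 3\lambda$. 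No single step is deep; the care is entirely in the norm estimates and in the separable reduction of the splitting problem.
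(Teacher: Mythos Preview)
The main gap is in your treatment of the quantitative part (2) and the closely related step $(a)\Rightarrow(c)$. Your plan is to push out along $t$ to obtain $0\to E\to\PO\to S\to 0$ with $S=X/Y$ separable, and then produce a bounded projection $P:\PO\to E$. You suggest two routes: ``extending $\mathbf 1_E$ from a norm-one copy of $E$'' and the separable-slice argument with $W\subset\PO$. Neither works as written. The first fails outright because $E$ need not be separable, so $\lambda$-separable injectivity tells you nothing about extending $\mathbf 1_E$ from $E\subset\PO$ to $\PO$. The second does yield a \emph{qualitative} splitting---your $r:W\to E$ extending $V\hookrightarrow E$ produces a section via $[w]\mapsto w-r(w)$ (not a ``projection of $W$ onto $V$'' as you write; $r$ lands in $E$, not in $V$)---but you have no control on the norm of this section as a map from $S$: the natural bijection $W/(W\cap E)\to S$ is continuous but not isometric, and its inverse may have arbitrarily large norm depending on how $W$ was chosen. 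Your remark that ``the section $S\to\PO$ can be chosen with norm $\le 1+\eps$'' conflates linear sections with the homogeneous bounded selections that always exist; a \emph{linear} section of small norm is precisely the object you are trying to construct.

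The paper sidesteps both difficulties by never forming the push-out along $t$. Instead it takes a quotient map $q:\ell_1\to X/Y$, lifts it to $Q:\ell_1\to X$, and realizes $X$ itself as (isomorphic to) the push-out of $0\to\ker q\to\ell_1\to X/Y\to 0$ along $\phi=Q|_{\ker q}:\ker q\to Y$. The only extension needed is that of $t\phi:\ker q\to E$ to all of $\ell_1$---a genuinely separable problem, handled by $(a)$ (indeed by $(b)$) with constant $\lambda$. The universal property then gives $L:\PO\to E$ with $\|L\|\le\lambda\|t\|$, and the factor $3$ arises from the explicit isomorphism $\gamma:\PO\to X$: writing a representative of $\gamma^{-1}(x)$ via a homogeneous (nonlinear) selection $X/Y\to\ell_1$ of norm $\le 1$ gives $\|\gamma^{-1}\|\le 3$ by a three-term triangle inequality. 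Since this argument uses separable injectivity only inside $\ell_1$, it simultaneously delivers $(b)\Rightarrow(c)$, which is why the paper says $(b)\Rightarrow(a)$ ``follows from the proof of (2)''; your separate direct argument for $(b)\Rightarrow(a)$ is correct but becomes redundant. Your treatment of part (1) and of the trivial implications is fine and matches the paper.
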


\begin{proof} It is clear that $(c)\Rightarrow (a) \Rightarrow
(b)$ and $(c) \Rightarrow (d) \Leftrightarrow (e)$. Moreover,
$(1)$ shows that $(d) \Rightarrow (c)$ and $(2)$ shows that
$(a)\Rightarrow (c)$. The remaining implication $(b)\Rightarrow
(a)$ follows from the proof of (2) below.\\

For the sufficiency statement in (1) simply consider $t$ as the
identity on $E$. For the necessity statement, given an operator
$t: Y \to E$ form  the associated  push-out diagram$$
\begin{CD}
 0 @>>> Y @>\imath>> X @>\pi>> X/Y @>>> 0\\
 & & @V{t}VV @VV{t'}V @|\\
 0 @>>> E @>\imath'>> \PO @>>> \PO/E @>>> 0.
\end{CD}
$$ Since $\PO/E = X/Y$ is separable, there
is a projection $p: \PO \to E$ with norm at most $\lambda$, and
thus, recalling that $\|t'\| \leq 1$, the composition $p t': X \to
E$ yields an extension of $t$ with norm at most $\lambda$.

The proof for (2) is a little more tricky. Let $q$ be a surjective
map from $\ell_1 \to X/Y$. The lifting property of $\ell_1$
provides an operator $Q: \ell_1\to X$. Consider thus the
commutative diagram
$$
\begin{CD}
 0 @>>> \ker q @>\jmath>> \ell_1 @>q>> X/Y @>>> 0\\
 & & @V{\phi}VV @VQVV @| \\
 0 @>>> Y @>>> X @>>> X/Y @>>> 0
\end{CD}
$$Let us construct the true push-out of the couple $(\phi, \jmath)$ and
the corresponding complete diagram
$$
\begin{CD}
 0 @>>> \ker q @>\jmath >> \ell_1 @>q>> X/Y @>>> 0\\
 & & @V{\phi}VV @VV\phi'V @|\\
 0 @>>> Y @>\jmath'>> \PO @>>> X/Y @>>> 0.
\end{CD}
$$
We can consider without loss of generality that $\|\phi\|=1$. Let
$S: \ell _1 \to E$ be an extension of $t\phi$ with $\|S\| \leq
\lambda \|t\phi\| \leq \lambda \|t\|$. By the universal property
of the push-out, there exists an operator $L : \PO \to E$ such
that $L \phi' = S$ and $ \| L\| \leq \max \{\|t\|, \|S\|\} \leq
\lambda \|t\|. $ Again by the universal property of the push-out,
there is a diagram of equivalent exact sequences
$$
\begin{CD}
 0 @>>> Y@>\jmath'>> \PO @>>> X/Y @>>> 0\\
 & & @|  @V{\gamma}VV @|\\
 0 @>>> Y @>\imath >> X @>p>> X/Y @>>> 0,
 \end{CD}
 $$
where the isomorphism $\gamma$ is defined as $\gamma((y, u) +
\Delta) = \jmath (y) +\; Q(u)$ is such that $ \|\gamma\| \leq \max
\{\|\jmath\|, \|Q\|\} \leq 1. $ The desired extension of $t$ to
$X$ is ${T} = L \gamma^{-1}$, where $\gamma^{-1}$ comes defined by
$$
\gamma^{-1}(x) = (x- s(px), s(px)) + \Delta,
$$where $s: X/Y \to \ell_1$ is a homogeneous bounded selection for
$q$ with $\|s\| \leq 1 $. One clearly has $\|\gamma^{-1}\| \leq
3$, and therefore $\|{T}\|\leq 3 \lambda$.
\end{proof}

We are especially interested in the following subclass of
separably injective spaces.

\begin{defin} A Banach space $E$ is said to be universally separably
injective if for every Banach space $X$ and each separable
subspace $Y\subset X$, every operator $t:Y\to E$ extends to an
operator $T:Y\to X$. If some extension $T$ exists with
$\|T\|\leq\lambda \|t\|$ we say that $E$ is
universally $\lambda$-separably injective.
\end{defin}

It is easy to check that a Banach space $E$ is universally
separably injective if and only if every $E$-valued operator with
separable range extends to any superspace. It is also easy to show
that every universally separably injective space is
$\lambda$-universally separably injective for some $\lambda$.

Recall that a Banach space $X$ has Pe\l czy\'nski's property $(V)$
if each operator defined on $X$ is either weakly compact or it is
an isomorphism on a subspace isomorphic to $c_0$. We will say that
$X$ has Rosenthal's property $(V)$ if it satisfies the preceding
condition with $\ell_\infty$ replacing $c_0$. It is well-known
that Lindenstrauss\ spaces (i.e.,
$\mathcal{L}_{\infty,1+}$-spaces) have this property
\cite{johnzippre}.

Not all $\mathcal{L}_\infty$-spaces have Pe\l czy\'nski's property
$(V)$: for example, the $\mathcal L_\infty$-spaces without copies
of $c_0$ constructed by Bourgain and Delbaen \cite{BourgainD80};
or those that can be obtained from Bourgain-Pisier \cite{bp}; or
the space $\Omega$ constructed in \cite{ccky} as a twisted sum
$$
0 \To C[0,1] \To \Omega \To c_0 \To 0
$$
with strictly singular quotient map. Recall that a Banach space
$X$ is said to be a Grothendieck space if every operator from $X$
to a separable Banach space (or to $c_0$) is weakly compact.
Clearly, a Banach space with property $(V)$ is a Grothendieck
space if and only  if it has no complemented subspace isomorphic
to $c_0$. It is well-known that $\ell_\infty$ is a Grothendieck
space. One moreover has.

\begin{prop}\label{(V)}$\;$
\begin{itemize}
\item[(a)] A separably injective space is of type
$\mathcal{L}_{\infty}$, has Pe\l czy\'nski's property (V) and,
when it is infinite dimensional, contains copies of $c_0$.

\item[(b)] A universally separably injective space is a
Grothendieck space of type $\mathcal{L}_{\infty}$, has Rosenthal's
property $(V)$ and, when it is infinite dimensional, contains
$\ell_\infty$.
\end{itemize}

\end{prop}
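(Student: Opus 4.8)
The plan is to reduce both parts to known facts about $\mathcal L_\infty$-spaces plus two extension arguments, one producing a copy of $c_0$ (resp.\ $\ell_\infty$) inside $E$ and one ruling out complemented copies of $c_0$ in the universal case. First I would establish the $\mathcal L_\infty$ claim: a separably injective $E$ must be an $\mathcal L_{\infty,\lambda}$-space for some $\lambda$. Indeed, fix a finite-dimensional $F\subset E$; embed $F$ isometrically into some $C(K)$-space, which is an $\mathcal L_{\infty,1+}$-space, so $F$ sits inside a finite-dimensional $G\subset C(K)$ with $\dbm(G,\ell_\infty^{\dim G})\le 1+\e$. Now $E$ is $\lambda$-separably injective (by the remark preceding Proposition~\ref{sepinj-char}), so the inclusion $F\hookrightarrow E$ extends to an operator $T\colon G\to E$ with $\|T\|\le\lambda$; then $T(G)$ is a finite-dimensional superspace of $F$ in $E$ whose distance to $\ell_\infty^{n}$ is controlled by $\lambda(1+\e)$ times $\|T^{-1}|_{T(G)}\|$, and a routine perturbation argument (taking $G$ to contain enough of a $1+\e$-net so that $T$ is an isomorphism on $G$) yields the desired bound. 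This gives that every infinite-dimensional separably injective space is an $\mathcal L_\infty$-space, hence contains $c_0$ (a classical fact: infinite-dimensional $\mathcal L_\infty$-spaces contain $c_0$). Property $(V)$ then follows because property $(V)$ is inherited by $\mathcal L_\infty$-spaces from the fact that a non-weakly-compact operator on an $\mathcal L_\infty$-space fixes a copy of $c_0$ — but this last step needs care, so instead I would argue directly: if $u\colon E\to W$ is not weakly compact, then (by standard Pe\l czy\'nski theory) there is a sequence equivalent to the $c_0$-basis on which $u$ is bounded below up to a subsequence; the separable subspace thus generated, together with the separable injectivity of $E$, lets one build an isomorphic $c_0$ inside $E$ on which $u$ is an isomorphism. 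This is exactly property $(V)$.

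For part (b), the $\mathcal L_\infty$ conclusion is immediate since universally separably injective implies separably injective. To get $\ell_\infty\subset E$ I would take any isometric copy of $c_0$ inside $E$ (which exists by part (a)) and use that $c_0\subset\ell_\infty$; universal separable injectivity applied to the inclusion $c_0\hookrightarrow E$, regarded as an operator from the separable subspace $c_0$ of $\ell_\infty$, extends it to an operator $T\colon\ell_\infty\to E$, and since $T|_{c_0}$ is an isometric embedding and $c_0$ is not complemented in $\ell_\infty$ while... — here one must verify $T$ is still an isomorphic embedding, which uses that $\ell_\infty/c_0$ maps into $E$ and a gliding-hump / the fact that any operator $\ell_\infty\to E$ that is an isomorphism on $c_0$ is an isomorphism on a copy of $\ell_\infty$; this is a known rigidity property of $\ell_\infty$ and is the technical heart of part (b). Rosenthal's property $(V)$ is proved exactly as Pe\l czy\'nski's $(V)$ above, replacing the $c_0$-sequence by the $\ell_\infty$-copy obtained from extending the associated operator on $c_0$ to $\ell_\infty$ using universal separable injectivity once more.

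It remains to see that a universally separably injective $E$ is a Grothendieck space. By the remark in the text, a space with property $(V)$ (here, Rosenthal's, hence also Pe\l czy\'nski's) is Grothendieck iff it has no complemented copy of $c_0$; so I would assume $c_0$ is complemented in $E$ via a projection $P$ and derive a contradiction. Complementation gives an operator $E\to c_0$; compose with the quotient $\ell_\infty\to\ell_\infty/c_0$ after extending suitably — more concretely, a complemented $c_0$ in $E$ produces, inside $E$, a copy of $c_0$ that is \emph{not} contained in any copy of $\ell_\infty$ (because the projection would then restrict to a projection of $\ell_\infty$ onto $c_0$, which is impossible). But by part (b)'s argument, the inclusion of this $c_0$ into $E$ extends along $c_0\hookrightarrow\ell_\infty$ to an embedding of $\ell_\infty$ into $E$ containing it — contradiction. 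The main obstacle I anticipate is precisely the rigidity step used twice: showing that an extension $T\colon\ell_\infty\to E$ of an embedding $c_0\hookrightarrow E$ is an isomorphism on some copy of $\ell_\infty$ (not merely bounded below on $c_0$). I would handle it via Rosenthal's theorem on operators on $\ell_\infty$: such a $T$, being an isomorphism on $c_0$, cannot be weakly compact, so it fixes a copy of $\ell_\infty$; and one checks this copy can be taken to contain the original $c_0$ by an exhaustion argument over the disjointly supported structure of $\ell_\infty$.
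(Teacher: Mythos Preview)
There are genuine gaps in your proposal, and the paper's approach differs in ways that matter.

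\textbf{The $\mathcal L_\infty$ claim.} Your direct argument does not work as written: when you extend the inclusion $F\hookrightarrow E$ to $T:G\to E$ with $\|T\|\le\lambda$, you have no control on $T$ outside $F$; in particular $T$ need not be injective on $G$, so $T(G)$ need not be a superspace of $F$ with bounded distance to $\ell_\infty^n$. No perturbation or net argument repairs this, because separable injectivity gives you \emph{an} extension, not one close to an isometry. The paper sidesteps the issue entirely: for finite-dimensional $F\subset X$ one extends $t|_{Y\cap F}$ to $T_F:F\to E$ with $\|T_F\|\le\lambda$, and then an ultrafilter limit in the weak* topology yields $T:X\to E^{**}$ extending $t$. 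This shows $E^{**}$ is $\lambda$-injective, whence (by Lindenstrauss) $E^{**}$ and therefore $E$ is an $\mathcal L_{\infty}$-space.

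\textbf{Containing $c_0$ and property $(V)$.} The assertion that every infinite-dimensional $\mathcal L_\infty$-space contains $c_0$ is \emph{false}: the Bourgain--Delbaen spaces (mentioned just above the Proposition in the paper) are $\mathcal L_\infty$-spaces with no copy of $c_0$. So you cannot deduce $c_0\subset E$ that way, and your fallback (``standard Pe\l czy\'nski theory'') is not an argument. The paper's device is the point you are missing: given a non-weakly-compact $T:E\to Z$, take a bounded sequence $(x_n)$ with $(Tx_n)$ having no weakly convergent subsequence, let $Y=[x_n]$, regard $Y$ as a subspace of $C[0,1]$, and use separable injectivity to extend the inclusion $Y\hookrightarrow E$ to $J:C[0,1]\to E$. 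Then $TJ:C[0,1]\to Z$ is not weakly compact, and one now invokes property $(V)$ \emph{of $C[0,1]$} (a known fact for Lindenstrauss\ spaces) to conclude $TJ$, hence $T$, fixes a copy of $c_0$. For part (b) the same scheme works with $\ell_\infty$ in place of $C[0,1]$, using Rosenthal's theorem that a non-weakly-compact operator on $\ell_\infty$ fixes a copy of $\ell_\infty$.

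\textbf{Grothendieck.} You are making this harder than necessary. Once Rosenthal's property $(V)$ is established, Grothendieck is automatic: any operator from $E$ into a separable space must be weakly compact, since otherwise it would fix a copy of $\ell_\infty$, which cannot embed into a separable space. Your detour through ``no complemented $c_0$'' and the attempt to arrange that the fixed copy of $\ell_\infty$ contains the original $c_0$ is unnecessary (and the last step is not obvious anyway).
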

\begin{proof}
(a) Let $E$ be a $\lambda$-separably injective space. We want to
see that if $Y$ is a subspace of any Banach space $X$, every
operator $t:Y\to E$ extends to an operator $T:X\to E^{**}$ with
$\|T\|\leq\lambda\|t\|$. This implies that $E^{**}$ is
$\lambda$-injective, by an old result of Lindenstrauss\
\cite[Theorem 2.1]{lind}. Being of infinite dimension,  $E^{**}$
is an $\mathcal{L}_{\infty,9\lambda^+}$ space and so is $E$. Let
$t:Y\to E$ be an operator. Given a finite-dimensional subspace $F$
of $X$, let $T_F:F\to E$ be any operator extending the restriction
of $t$ to $Y\cap F$. Let $\mathscr F$ be the set of
finite-dimensional subspaces of $X$, ordered by inclusion, let
$\U$ be any ultrafilter refining the Fr\'echet filter on $\mathscr
F$, that is, containing every set of the form $\{G\in\mathscr F:
F\subset G\}$ for fixed $F\in\mathscr F$. Then, define $T:X\to
E^{**}$ taking
$$
T(x)=\text{weak*-}\lim_{\U(F)}T_F(1_{F(x)}x).
$$
It is easily seen that $T$ is a linear extension of $t$, with $\|T\|\leq \lambda\|t\|$.

To show that $E$ contains $c_0$ and
has property $(V)$, let $T: E \to X$ be a non-weakly compact
operator ($E$ being an infinite dimensional $\mathcal L_\infty$ space cannot be
reflexive). Choose a bounded sequence $(x_n)$ in $E$ such that
$(Tx_n)$ has no weakly convergent subsequences and let $Y$ be the subspace spanned by $(x_n)$ in $E$. As $Y$ is separable we can regard it as a subspace of
$C[0,1]$. Let $J: C[0,1] \to E$ be any operator extending the inclusion of $Y$ into $E$. Since $TJ
:C[0,1] \to E$ is not weakly compact,  $TJ$ is an isomorphism on
some subspace isomorphic to $c_0$; and the same occurs to $T$.

(b) If, in addition to that, $E$ is universally separably
injective we may take $T:E\to Z$ and $Y\subset E$ as before but
this time we consider $Y$ as a subspace of $\ell_\infty$. If
$J:\ell_\infty\to E$ is any extension of the inclusion of $Y$ into
$E$, then $TJ :\ell_\infty\to Z$ is not weakly compact. Hence it
is an isomorphism on some subspace isomorphic to $\ell_\infty$ and
so is $T$.
\end{proof}

Several modifications on the proof of Ostrovskii \cite{ostr} yield

\begin{prop}\label{first}
 A $\lambda$-separably injective space with $\lambda
<2$ is either finite-dimensional or has density character at least $\frak c$.
\end{prop}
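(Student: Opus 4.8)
The plan is to follow the strategy of Ostrovskii's argument that separable spaces cannot be $\lambda$-separably injective for small $\lambda$, and push it to control the density character. First I would suppose, for contradiction, that $E$ is infinite-dimensional, $\lambda$-separably injective with $\lambda<2$, and has density character strictly less than $\mathfrak c$. The key idea is to produce a single separable subspace-extension problem whose solution would force a ``large'' separated family inside a small set, contradicting the cardinality bound. Concretely, I would look for a separable Banach space $X$ with a distinguished subspace $Y\subset X$ such that $X/Y$ (or some relevant quotient) is finite-dimensional or small, but such that any norm-$\lambda$ extension to $X$ of a suitable norm-one $Y$-valued operator has to take the unit ball of some continuum-sized index set to points that are pairwise more than (say) $\lambda$ apart, or more than $1$ apart — whereas in a space of density $<\mathfrak c$ one cannot have $\mathfrak c$-many points of the unit ball that are $\delta$-separated for a fixed $\delta$ (since a $\delta/2$-dense subset of the ball, of size equal to the density character, would have to hit each small ball, giving an injection from a $\delta$-separated family into it).

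The natural candidate is the classical example built on $c_0$ and $\ell_\infty$: take $Y = c_0$ sitting inside $X = c$ (or inside $\ell_\infty$, but we need $X$ separable, so $X=c$), or, more usefully, work with $c_0(\Gamma)$ for $|\Gamma|=\mathfrak c$ realized as an inductive limit. Since $E$ must be a $\mathcal L_\infty$-space and infinite-dimensional, it contains $c_0$; fix such a copy $C\cong c_0$ inside $E$. The map to extend would be the identity $c_0\to C\subset E$, posed on a well-chosen separable superspace of $c_0$. Here is where I would adapt Ostrovskii: he shows that for $\lambda<2$ one cannot extend the identity on $c_0$ from the superspace generated by adjoining a single point that is ``almost orthogonal at distance $1$'' to all of $c_0$'s basis — more precisely, adjoining an element $u$ with $\|u - e_n\| = 1$ for all $n$ but $\|u\|$ close to $1$ — because an extension $T$ would send $u$ to a point within distance $\lambda$ of each $e_n$, which is impossible in $c_0$ once $\lambda<2$. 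To get density $\mathfrak c$, I would instead adjoin a continuum-sized family $\{u_\gamma\}_{\gamma\in\Gamma}$ of such points, using $Y=c_0$ inside the separable space $X=\overline{\operatorname{span}}(c_0\cup\{u_\gamma: \gamma\in\Gamma\})$ — but that span need not be separable. So the real construction is to use an \emph{Auerbach-type} or \emph{independent} family indexed by a perfect set: build a separable $X\supset c_0$ and a norm-one operator $t\colon c_0\to E$ whose extension would have to realize $\mathfrak c$-many $\delta$-separated points in the unit ball of $E$ for some fixed $\delta>0$ depending only on $\lambda$. One clean way: let $X$ be a separable $\mathcal L_\infty$-space (e.g.\ a suitable $C(K)$ with $K$ metrizable) containing an isometric copy of $c_0$ in such a way that there is a continuum of elements of $B_X$ that are pairwise $(\ge 1)$-apart modulo $c_0$ but all at distance $<\lambda$ from $c_0$ in a rigid way; then any $\lambda$-extension of the inclusion $c_0\hookrightarrow E$ (which exists since $c_0\subset E$) maps this continuum into a $\delta$-separated subset of $\lambda B_E$, forcing $\dens E\ge\mathfrak c$.

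The main obstacle, and the part I expect to require the most care, is the simultaneous bookkeeping: Ostrovskii's original argument handles the extension of the identity on $c_0$ across the addition of essentially one new direction, and the inequality ``$\lambda<2$'' is exactly the threshold that makes a single such extension impossible \emph{inside $c_0$}; when the target is a general infinite-dimensional $\mathcal L_\infty$-space $E$ rather than $c_0$ itself, one must localize: use that $E$ is $\mathcal L_{\infty,\mu}$ for $\mu$ close to $1$ (indeed $E^{**}$ is $\lambda$-injective and $E$ is $\mathcal L_{\infty,9\lambda^+}$ by Proposition~\ref{(V)}, but one needs a sharper local structure — perhaps passing to the specific $c_0$-copy guaranteed inside $E$ and working in the bidual or in a suitable ultrapower where the copy becomes $1$-complemented). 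So I would (i) reduce to the case where the relevant finite pieces of $E$ are $(1+\eps)$-isomorphic to $\ell_\infty^n$'s, (ii) run Ostrovskii's separation estimate simultaneously over a continuum-indexed perfect family, keeping the constant $\delta=2-\lambda-\eps>0$ uniform, (iii) observe that a $\lambda$-separably injective $E$ provides a single extension handling the whole family at once (crucially, the whole $X$ is separable, so one extension suffices), and (iv) conclude that $\lambda B_E$ contains a $\delta$-separated set of size $\mathfrak c$, hence $\dens E\ge\mathfrak c$. Verifying that the constants can be kept uniform across the continuum of almost-orthogonal directions — i.e.\ that one can build a \emph{single} separable $X$ doing this, rather than $\mathfrak c$ many non-separable ones — is the crux, and I would handle it by taking $X$ to be a $C(K)$ for $K$ a metrizable compactum with a perfect set of suitably placed points (so that characteristic-function-like elements supply the almost-orthogonal family), which keeps $X$ separable while encoding $\mathfrak c$-many separated directions modulo a fixed copy of $c_0$.
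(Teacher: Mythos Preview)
The paper gives no proof of this proposition; it only says that ``several modifications on the proof of Ostrovskii \cite{ostr} yield'' the result, and then moves on. So there is nothing in the paper to compare your outline against line by line. That said, your outline contains a genuine gap that would make the argument fail as written.

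The fatal step is your insistence on building a \emph{single} separable superspace $X\supset c_0$ and using a \emph{single} norm-$\lambda$ extension $T:X\to E$ to carry a continuum-sized family $\{u_\gamma\}\subset B_X$ onto a $\delta$-separated subset of $\lambda B_E$. This is impossible for an elementary reason: $T(X)$ is the continuous linear image of a separable space, hence separable, and a separable metric space cannot contain uncountably many points at pairwise distance $\geq\delta$. However cleverly you ``encode $\mathfrak c$-many separated directions modulo $c_0$'' inside a metrizable $C(K)$, one bounded operator collapses the entire picture into a separable range. Your sentence ``crucially, the whole $X$ is separable, so one extension suffices'' is exactly backwards: it is precisely the separability of $X$ that prevents a single extension from ever producing an uncountable separated family in $E$.

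What does work---and what is presumably meant by ``modifications of Ostrovskii''---is to run \emph{continuum-many} extension problems. Fix a suitable norm-one operator $t:c_0\to E$ (built from a separated sequence in $E$, which exists since $E$ is infinite-dimensional). For each $A$ in a family $\mathcal A$ of subsets of $\mathbb N$ with $|\mathcal A|=\mathfrak c$, form the separable space $X_A=\span(c_0,u_A)\subset\ell_\infty$ for an appropriate $u_A$ depending on $A$, extend $t$ to $T_A:X_A\to E$ with $\|T_A\|\leq\lambda$, and set $x_A=T_A(u_A)\in\lambda B_E$. The points $x_A$ now come from \emph{different} operators, so no separability obstruction arises, and Ostrovskii's estimates (this is where the threshold $\lambda<2$ enters, and where the real work lies) give a uniform positive lower bound on $\|x_A-x_B\|$ for $A\neq B$ in $\mathcal A$. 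That forces a $\delta$-separated family of size $\mathfrak c$ inside $\lambda B_E$, hence $\dens E\geq\mathfrak c$. Your step (ii)---keeping the separation constant uniform---survives in this framework; it is step (iii) that has to be replaced.
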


Recall that a class of Banach spaces is said to have the 3-space
property if whenever $X/Y$ and $Y$ belong to the class, then so
$X$ does. See the monograph \cite{castgonz}.

\begin{prop}\label{quotsi}\quad

\begin{enumerate}
\item The class of separably injective spaces has the 3-space
property.

\item The quotient of two separably injective spaces is separably
injective.

\item The class of universally separably injective spaces has the
3-space property.

\item The quotient of a universally separably injective space by a
separably injective space is universally separably injective.
\end{enumerate}
\end{prop}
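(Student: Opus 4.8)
The plan is to use the push-out/pull-back machinery and the characterizations already established. For the 3-space property of separable injectivity (item 1), suppose $0\to Y\to X\to X/Y\to 0$ with $Y$ and $X/Y$ separably injective. By Proposition \ref{sepinj-char}(e) it suffices to show $\Ext(S,X)=0$ for every separable $S$. Apply the contravariant long exact sequence in the first variable to $0\to Y\to X\to X/Y\to 0$; actually the cleanest route is the covariant homology sequence associated to the separable space $S$ and the coefficient sequence $0\to Y\to X\to X/Y\to 0$, which gives an exact fragment $\Ext(S,Y)\to\Ext(S,X)\to\Ext(S,X/Y)$. Since the outer terms vanish by hypothesis, so does $\Ext(S,X)$. (If one prefers to avoid quoting the homology sequence with the ambient space $X$ in the middle, one can argue directly: given an extension $0\to X\to Z\to S\to 0$, pull back along $X\to X/Y$ composed suitably, split using separable injectivity of $X/Y$ to complement a copy of... — but the homology-sequence argument is shorter and legitimate since it is cited in the excerpt.)

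For item 2, let $0\to A\to E\to E/A\to 0$ be a quotient of separably injective spaces and let $0\to E/A\to Z\to S\to 0$ be an extension by a separable $S$; we must split it. Form the pull-back of $Z\to S$ along nothing — rather, form the pull-back square over the quotient map $E\to E/A$ and the extension $Z\to E/A$... more precisely, pull $0\to E/A\to Z\to S\to 0$ back along the identity is trivial, so instead one lifts: consider the diagram obtained by pulling back $Z\twoheadrightarrow E/A$'s inclusion. The honest argument: in the long exact sequence $\mathfrak L(E,Z)\to\mathfrak L(A,Z)\to\Ext(E/A,Z)\to\Ext(E,Z)$, since $A$ is separably injective and... this needs care. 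A cleaner approach: by Proposition \ref{sepinj-char}, $E/A$ is separably injective iff $\Ext(S,E/A)=0$ for all separable $S$; given such $S$, the homology sequence for $0\to A\to E\to E/A\to 0$ in the contravariant variable gives $\Ext(S,E)\to\Ext(S,E/A)\to\Ext^2$, but there is no $\Ext^2$ term available in this category in general. So instead use the \emph{covariant} sequence in the \emph{second} variable applied to the coefficient sequence: $\mathfrak L(S,E/A)\xleftarrow{}$ — no. The correct tool is: from $0\to A\to E\to E/A\to 0$ we get for fixed separable $S$ the exact sequence $\Ext(S,E)\to\Ext(S,E/A)\to\Ext(S,\,?)$; the term after $\Ext(S,E/A)$ would be $\Ext$ of $S$ with the cokernel, which here is $0$, and actually the relevant fragment is $\mathfrak L(S, E/A)\to \Ext(S,A)\to\Ext(S,E)\to\Ext(S,E/A)\to 0$ — the final $0$ because the next term $\Ext(S, \operatorname{coker})$ vanishes. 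Since $\Ext(S,E)=0$, surjectivity onto $\Ext(S,E/A)$ forces $\Ext(S,E/A)=0$. This is the key mechanism, and it is also exactly item 2.

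For items 3 and 4 the scheme is parallel but using the universal characterization. Here the obstacle is that universal separable injectivity is characterized (Section 5, which we may not invoke yet) by behaviour of $\Ext(\ell_\infty/S,\cdot)$, so within the tools of Section 3 we must argue by extension of operators directly. For item 3, given $0\to Y\to X\to X/Y\to 0$ with $Y,X/Y$ universally separably injective, a separable subspace $N$ of a superspace $W$, and $t\colon N\to X$: push out the inclusion $N\hookrightarrow W$ is not available; instead compose $t$ with $X\to X/Y$ to get $\bar t\colon N\to X/Y$, extend it to $\bar T\colon W\to X/Y$ by universal separable injectivity of $X/Y$, form the pull-back of $X\to X/Y$ along $\bar T$ to obtain $0\to Y\to W'\to W\to 0$ with $N$ lifting into $W'$; since $Y$ is separably injective and $W'/\!\!\sim$ — one checks $W'$ contains a separable subspace over which the needed splitting happens, then uses separable injectivity (not universal) of $Y$ together with $N$ being separable to extend the residual map, and reassembles. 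Item 4 is the same argument noting that $Y$ need only be separably injective for the final extension step because $N$ is separable.

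\medskip

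The main obstacle I expect is bookkeeping in items 3 and 4: one must verify that in the pull-back $W'$ the preimage of $N$ is again separable (it sits in an extension of $N$ by $Y$, which need \emph{not} be separable, so one must instead work with the splitting of the pull-back extension restricted over $N$ rather than over all of $W$), and that the two partial extensions glue to a genuine operator $W\to X$ of controlled norm. Concretely, the subtle point is reducing the extension problem for the non-separable target $Y$ to one where the relevant domain quotient is separable, so that separable injectivity of $Y$ applies — this is where Proposition \ref{sepinj-char}(c) (extension when $X/Y$ is separable) does the real work, and keeping track of which spaces are separable is the crux.
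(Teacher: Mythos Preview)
Your item 1 is fine and matches the paper's homological alternative. But items 2 and 3 both have real gaps.

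In item 2 you claim the covariant six-term sequence ends $\Ext(S,E)\to\Ext(S,E/A)\to 0$ ``because the next term $\Ext(S,\coker)$ vanishes''. This is a misunderstanding of the long exact sequence: the term after $\Ext(S,E/A)$ is $\Ext^2(S,A)$, not $\Ext$ of a cokernel, and your argument as written never invokes the separable injectivity of $A$. (The route is repairable: resolve the separable $S$ as $0\to K\to\ell_1\to S\to 0$ and dimension-shift to get $\Ext^2(S,A)\cong\Ext(K,A)=0$, now using that $K$ is separable and $A$ is separably injective.) The paper sidesteps this with a direct pull-back: given $\phi:Y\to G$ with $Y$ separable, pull back along $E\to G$ to get $0\to F\to\PB\to Y\to 0$; this splits because $Y$ is separable and $F$ is separably injective, and the resulting lift $Y\to E$ then extends to $X$ by (universal) separable injectivity of $E$. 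This handles items 2 and 4 simultaneously; note in particular that item 4 is about maps into the \emph{quotient}, not into $X$, so it is not ``the same argument'' as your item 3.

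Item 3 is the more serious gap, and the obstruction is exactly the one you flag at the end. After extending $\pi t$ to $\bar T:W\to X/Y$ and pulling back, you face the sequence $0\to Y\to W'\to W\to 0$ with a section only over the separable $N\subset W$; finishing requires extending that section to all of $W$, i.e., splitting an extension of $Y$ by the \emph{arbitrary} space $W$. Neither separable nor universal separable injectivity of $Y$ yields this, and Proposition~\ref{sepinj-char}(c) does not help since $W/N$ is not separable. The paper does not attempt a direct proof: it explicitly defers item 3 until after Theorem~\ref{Thetastar=Sinfty} (universal separable injectivity $\Leftrightarrow$ $\ell_\infty$-upper-saturation) and then proves the 3-space property for the upper-saturation condition (Proposition~\ref{3spu}). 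That detour appears to be essential; your direct approach does not close.
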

\begin{proof} The simplest proof for the 3-space property (1) follows
from characterization (2) in Proposition \ref{sepinj-char}: let us
consider an exact sequence $ 0 \To F \To E \stackrel{\pi}\To G \To
0$ in which both $F$ and $G$ are separably injective. Let $\phi: K
\to E$ be an operator from a subspace $\imath :K \to \ell_1$ of
$\ell_1$; then $\pi\phi$ can be extended to an operator $\Phi:
\ell_1 \to G$, which can in turn be lifted to an operator $ \Psi:
\ell_1 \to E$. The difference $\phi - \Psi \imath$ takes values in
$F$ and can thus be extended to an operator $e: \ell_1 \to F$. The
desired operator is $ \Psi + e$. A different homological proof
that properties having the form  $\Ext(X, -)=0$ are always 3-space
properties can be found in \cite{cabecastlong}.

To prove (2) and (4) let us consider an exact sequence $ 0 \To F \To E \stackrel{\pi}\To
G \To 0$ in which $F$ is
separably injective and $E$ is (universally) separably
injective. Let $\phi: Y \to G$ be an operator from a separable
space $Y$ which is a subspace of a separable (arbitrary)
space $X$. Consider the pull-back diagram
$$
\begin{CD} 0@>>> F @>>> E @>q>> G @>>> 0\\
& & @| @AA{\Phi}A @AA{\phi}A\\
 0@>>> F@>>> \PB @>Q>>
Y@>>> 0\end{CD}
$$
Since $F$ is separably injective, the lower exact sequence splits,
so $Q$ has a selection operator $s:Y\to \PB$. By the injectivity
assumption about $E$, there exists an operator $T:X\to E$ agreeing
with $Q s$ on $Y$. Then $q T:X\to G$ is the desired extension of
$\phi$.

The proof for (3) has to wait until Theorem \ref{Thetastar=Sinfty}
when a suitable characterization of universally separably
injective spaces will be presented.
\end{proof}

Several variations of these results can be seen in
\cite{castmorestud}. It is obvious that if $(E_i)_{i\in I}$ is a
family of $\lambda$-separably injective Banach spaces, then
$\ell_\infty(I,E_i)$ is $\lambda$-separably injective. The
non-obvious fact that also $c_0(I,E_i)$ is separably injective can
be considered as a vector valued version of Sobczyk's theorem.
Proofs for this result have been obtained by Johnson-Oikhberg
\cite{johnoikh}, Rosenthal \cite{roseco}, Cabello \cite{cabeco}
and Castillo-Moreno  \cite{castmoresob}, each with its own
estimate for the constant. These are $2\lambda^2$ (implicitly),
$\lambda(1+\lambda)^+, (3\lambda^2)^+$ and $6\lambda^+$,
respectively.

\begin{remark}\label{bourgremark}
Let $0\to F\to E\to G\to0$ be a short exact sequence of Banach
spaces. We know from Proposition~\ref{quotsi} that $E$ is
separably injective if the other two relevant spaces are; and the
same happens with $G$. What about $F$?
 Bourgain \cite{bour} constructed an uncomplemented copy
of $\ell_1$ in $\ell_1$, which yields an exact sequence
 $0 \to \ell_1 \to
\ell_1 \to B \to 0$ that does not split. By Lindenstrauss{'}
lifting $B$ is not an $\mathcal L_1$ space. Its dual sequence $0
\to B^* \to \ell_\infty \to \ell_\infty \to 0$ shows that the
kernel of a quotient mapping between two injective spaces may fail
to be even an $\mathcal L_\infty$-space.

\end{remark}


\section{Examples}\label{construct-sepinj}

All injective spaces are universally separably injective. Sobczyk
theorem states that $c_0$ --and $c_0(\Gamma)$, in general-- are
2-separably injective in its natural supremum norm. They are not
universally separably injective since they do not contain
$\ell_\infty$.

\subsection{Twisted sums} The $3$-space property yields that twisted sums of
separably injective are also separably injective. In particular:
\begin{itemize} \item Twisted sums of $c_0$ and $c_0(\Gamma)$.
This includes the Johnson-Lindenstrauss spaces $C(\Delta_\mathcal
M)$ \cite{johnlind} obtained taking the closure of the linear span
in $\ell_\infty$ of the characteristic functions $\{1_{n}\}_{n \in
\N}$ and $\{1_{M_\alpha}\}_{\alpha\in J}$ for an uncountable
almost disjoint family $\{M_\alpha\}_{\alpha\in J}$ of subsets of
$\N$. Marciszewski and Pol answer in \cite{marcpol} a question of
Koszmider \cite[Question 5]{koszpams} showing that there exist
$2^{\mathfrak c}$ almost disjoint families $\mathcal M$ generating
non-isomorphic $C(\Delta_{\mathcal M})$-spaces.

\item Twisted sums of two nonseparable $c_0(\Gamma)$ spaces. This
includes variations of the previous construction using the
Sierpinski-Tarski \cite{kuramos} generalization of the
construction of almost-disjoint families; the Ciesielski-Pol space
(see \cite{devilgod}); the WCG nontrivial twisted sums of
$c_0(\Gamma)$ obtained independently by Argyros, Castillo,
Granero, Jimenez and Moreno \cite{acgjm} and by Marciszewski
\cite{marc}

\item Twisted sums of $c_0$ and $\ell_\infty$, as those
constructed in \cite{cabecastuni}.

\item A twisted sum of $c_0$ and $c_0(\ell_\infty/c_0)$ that is
not complemented in any $C(K)$-space, as the one obtained in
\cite{castmorestud}.
\end{itemize}

It is not hard to prove that none of the preceding examples can be
universally separably injective.

\subsection{The space $\ell_\infty^c(\Gamma)$}  A typical $1$-universally separably injective space is the
space $\ell_\infty^c(\Gamma)$ of countably supported bounded
functions $f:\Gamma\to\R$, where $\Gamma$ is an uncountable set.
This space is isomorphic but not isometric to some $C(K)$ space,
showing in this way that the theory of universally separably
injective spaces does not run parallel with that of injective
spaces. What makes this space universally separably injective
space is the following property:

\begin{defin}
We say that a Banach space $X$ is $\ell_\infty$-upper-saturated if
every separable subspace of $X$ is contained in some (isomorphic)
copy of $\ell_\infty$ inside $X$.
\end{defin}

It is clear that an $\ell_\infty$-upper-saturated space is
universally separably injective. We will prove in Theorem
\ref{Thetastar=Sinfty} that the converse also holds.

\subsection{The space $\ell_\infty/c_0$}  Since $\ell_\infty$ is injective and $c_0$ is separably
injective, it follows from Proposition \ref{quotsi} that
$\ell_\infty/c_0$ is universally separably injective, although the
constant is not optimal. It follows from Proposition
\ref{cor:mideal}(a) that $\ell_\infty/c_0$ is 1-universally
separably injective, hence --by Theorem \ref{Thetastar=Sinfty}--
it is $\ell_\infty$-upper-saturated. This can be improved to show
that every separable subspace of $\ell_\infty/c_0$ is contained in
a subalgebra of $\ell_\infty/c_0$ isometrically isomorphic to
$\ell_\infty$.

It is well-known that $\ell_\infty/c_0$ is not injective. The
simplest proof appears in Rosenthal \cite{disjoint}: an injective
space containing $c_0(I)$ must also contain $\ell_\infty(I)$; it
is well-known that  $\ell_\infty/c_0$ contains $c_0(I)$ for $|I|=
\mathfrak c$ while it cannot contain $\ell_\infty(I)$. The proof
is is quite rough in a sense: it says that $\ell_\infty/c_0$ is
uncomplemented in its bidual, a huge superspace. Denoting
$\N^*=\beta \N \setminus \N$, Amir had shown in \cite{A} that
$C(\mathbb{N}^*)$ is not complemented in $\ell_\infty(2^\mathfrak
c)$, which provides another proof that $l_\infty/c_0$ is not
injective. Amir's proof can be refined in order to get
$C(\mathbb{N}^\ast)$ uncomplemented in a much smaller space. It
can be shown \cite{castplic} that $C(\mathbb{N}^\ast)$ contains an
uncomplemented copy $Y$ of itself. Corollary~\ref{unexpected}
yields that the corresponding quotient $C(\N^*)/ Y$ cannot be
isomorphic to a quotient of $\ell_\infty$ by a separable subspace.

\subsection{Other $C(K)$-spaces}\label{sub:CKF}

Recall that a compact Hausdorff space $K$ is said to be an
$F$-space if disjoint open $F_\sigma$ sets (equivalently,
cozeroes) have disjoint closures. Equivalently, if any continuous
function $f:K\to \R$ can be decomposed as $f= u|f|$ for some
continuous function $u: K\to \R$. One has (see \cite{accgm3} for a
proof of this and several generalized forms of this result).

\begin{prop} \label{people-sep}
A $C(K)$ space is  $1$-separably injective if and only if $K$ is
an $F$-space.
\end{prop}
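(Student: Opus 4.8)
A $C(K)$ space is $1$-separably injective if and only if $K$ is an $F$-space.

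The plan is to prove the two implications separately, and in both directions the bridge between the Banach-space statement and the topological statement is the fact that $K$ is an $F$-space precisely when $C(K)$ is a \emph{$1$-injective module over itself} in the sense that disjoint cozero sets can be separated by a continuous function, equivalently, that for every bounded sequence $(f_n)$ in $C(K)$ with pairwise disjoint cozero supports one can ``glue'' them together inside $C(K)$. Concretely I would use the classical equivalent formulation: $K$ is an $F$-space iff for any two sequences $(a_n),(b_n)$ in $C(K)$ with $a_m\le a_n\le b_n\le b_m$ for $m\le n$, there is $c\in C(K)$ with $a_n\le c\le b_n$ for all $n$ (a countable interpolation property). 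This is the property that the excerpt points to by citing \cite{accgm3}.

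For the direction ``$K$ an $F$-space $\Rightarrow$ $C(K)$ is $1$-separably injective'', I would argue as follows. Let $X$ be separable, $Y\subset X$ a subspace, and $t\colon Y\to C(K)$ a norm-one operator; we must extend it to $T\colon X\to C(K)$ with $\|T\|=1$. Fix a countable dense set $(x_j)$ in the unit ball of $X$ and, for each $j$, consider the continuous function $\widehat{x_j}:=$ ``the value $t$ should take at $x_j$'' — but of course $x_j\notin Y$ in general, so instead I would run the standard one-dimensional Hahn--Banach-type argument pointwise: for each $k\in K$, the functional $y\mapsto (ty)(k)$ on $Y$ extends, by Hahn--Banach, to a norm-one functional $\xi_k$ on $X$; the problem is that $k\mapsto \xi_k$ need not be continuous. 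This is exactly where the $F$-space hypothesis is used: one chooses the extensions coherently by a transfinite/inductive construction over a countable dense sequence in $X$, at each step invoking the countable interpolation property of $C(K)$ to keep the partially-defined extension continuous in $k$ and of norm $\le 1$. Passing to the closure gives the desired norm-one $T$.

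For the converse, ``$C(K)$ is $1$-separably injective $\Rightarrow$ $K$ is an $F$-space'', I would contrapose: if $K$ is not an $F$-space there are disjoint cozero sets $U,V$ with $\overline U\cap\overline V\neq\varnothing$, and from these one builds a concrete separable space $X$, a subspace $Y$, and a norm-one operator $t\colon Y\to C(K)$ that admits no norm-one (indeed, no norm-$<\!\sqrt2$, say) extension. The natural candidate is to take $Y\cong c_0$ generated by functions $g_n$ supported in $U$ and a single extra vector $h$ supported in $V$; the operator $t$ sends these to the obvious targets in $C(K)$, and a norm-one extension to $X=Y\oplus\R e$ (with $e$ chosen so that in $X$ the element $e$ "sees" both $U$ and $V$) would force a continuous function on $K$ interpolating incompatible values across $\overline U\cap\overline V$, a contradiction. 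Alternatively, and more cleanly, I would quote that $1$-separably injective $C(K)$ spaces are in particular $1$-separably injective as ordered Banach lattices, whence $K$ inherits the countable interpolation property, which is equivalent to being an $F$-space.

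The main obstacle is the forward direction: organizing the coherent choice of Hahn--Banach extensions $\xi_k$ so that the resulting map $k\mapsto \xi_k(x)$ lies in $C(K)$ for every $x$ in a countable dense set, and then verifying that the norm bound $1$ is preserved in the limit. The key technical input making this work is precisely the countable interpolation characterization of $F$-spaces (``$K$ is an $F$-space iff $C(K)$ satisfies countable interpolation''), which converts the continuity requirement at each stage of the induction into a solvable order-interpolation problem; since the construction only ever involves countably many constraints (coming from separability of $X$), countable interpolation is exactly strong enough. The remaining verifications — that the limit operator extends $t$ and has norm $1$ — are then routine. Since the excerpt explicitly defers the details and generalizations to \cite{accgm3}, I would present the $F$-space $\Rightarrow$ $1$-separably injective direction via this interpolation argument in a few lines and cite \cite{accgm3} for the full treatment, and dispatch the converse by the lattice/interpolation remark above.
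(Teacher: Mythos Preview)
The paper does not actually give a proof of this proposition: it simply states the result and refers the reader to \cite{accgm3} for a proof and generalizations. So there is no in-paper argument to compare your proposal against.

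That said, your outline is along the standard lines one finds in the literature (and presumably in \cite{accgm3}). The forward implication via Seever's countable-interpolation characterization of $F$-spaces, combined with a one-dimension-at-a-time Hahn--Banach extension over a countable dense set in $X$, is exactly the classical route; your identification of the countable interpolation property as the precise tool that converts each extension step into a solvable order problem is correct, and your remark that separability of $X$ is why countable interpolation suffices is the heart of the matter.

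Your converse is less sharp. The contrapositive idea is right, but the concrete construction you sketch (a $c_0$-type $Y$ together with one extra vector) is too vague to work as written, and the ``norm-$<\!\sqrt2$'' claim is not justified. The cleaner path, which you allude to at the end, is to show directly that $1$-separable injectivity of $C(K)$ forces the countable interpolation property: given $a_1\le a_2\le\cdots\le b_2\le b_1$ in $C(K)$, one manufactures a separable extension problem whose norm-one solution is exactly an interpolant $c$. This is a short argument once set up correctly, and it avoids the ad hoc counterexample. Since you already plan to cite \cite{accgm3} for the details, exactly as the paper does, your proposal is adequate as an outline; just be aware that the converse sketch would need tightening to stand on its own.
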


Simple examples show that when a $C(K)$-space is only isomorphic
to a $1$-separably injective then $K$ does not need to be an
$F$-space. It is an immediate consequence of Tietze's extension
theorem that a closed subset of an $F$-space is an $F$-space. In
particular, $\N^*=\beta \N \setminus \N$ is an $F$-space.

Given a compact space $K$, we write  $K'$ for its derived set,
that is, the set of its accumulation points. This process can be
iterated to define $K^{(n+1)}$ as $(K^{(n)})'$ with $K^{(0)}=K$.
We say that $K$ has height $n$ if $K^{(n)}=\varnothing$. We say
that $K$ has finite height if it has height $n$ for some $n\in\N$.

\begin{prop}\label{finiteith} If $K$ is a compact space of height $n$, then $C(K)$ is
$(2n-1)$-separably injective. Consequently, if $K$ is a compact
space of finite height then $C(K)$ is separably injective although
it is not universally separably injective.
\end{prop}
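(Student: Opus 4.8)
The plan is to argue by induction on the height $n$. The base case $n=1$ means $K^{(1)}=\varnothing$, i.e.\ $K$ has no accumulation points, so $K$ is finite and $C(K)$ is finite-dimensional; any operator into $C(K)$ extends with the same norm (finite-dimensional spaces are $1$-injective in the real case up to the constant $1=2\cdot1-1$), so the statement holds trivially. For the inductive step, suppose the result holds for all compact spaces of height $<n$ and let $K$ have height $n$, so $L:=K^{(n-1)}$ is a nonempty finite set (it is compact and has empty derived set). The idea is to split a function on $K$ into its behaviour near $L$ and its behaviour away from $L$, extend each piece using the inductive hypothesis on a space of strictly smaller height, and control the norms, losing a factor of $2$ at each step — which is exactly what produces the bound $2n-1 = 2(n-1)-1 + 2$.

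First I would set up the decomposition. Since $L$ is finite, pick disjoint clopen neighbourhoods: one can find a clopen set $U\supset L$ in $K$ (using that $L$ consists of finitely many points and $K$ is compact Hausdorff, hence normal, and points of $L$ have a clopen neighbourhood basis in the relevant sense — actually one uses that $K\setminus U$ can be taken to avoid $L$). The key point is that $\overline{K\setminus U}$ — or rather a suitable clopen $K_0:=K\setminus U$ — is a compact space whose $(n-1)$-th derived set is empty, i.e.\ it has height $\le n-1$. On the other side, near $L$ one exploits that $L$ is finite: a function on a clopen neighbourhood of $L$ that is constant on each point of $L$ modulo a piece vanishing at $L$ can be handled directly, or alternatively one treats the finitely many points of $L$ as a finite-dimensional obstruction. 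Concretely, given $t:Y\to C(K)$ with $Y\subseteq X$, $X$ separable, one writes each $t(y)$ as $t(y) = r(y) + s(y)$ where $r(y)$ records the values on $L$ (a map into the $|L|$-dimensional space $C(L)\cong\ell_\infty^{|L|}$, extendable with no norm loss since that space is $1$-injective) and $s(y)$ is supported off a neighbourhood of $L$, hence lives in $C(K_0)$ with $K_0$ of height $\le n-1$, to which the inductive hypothesis applies giving an extension with constant $2(n-1)-1 = 2n-3$. Assembling $r$ and $s$ and accounting for the norm of the splitting map $t\mapsto(r,s)$, which costs a factor $2$, yields the constant $2(2n-3)\le$ — more carefully, $(2n-3)+$ something — the claimed $2n-1$; the exact bookkeeping is the kind of routine estimate I would not grind through here, but the factor-of-$2$ loss per level is the structural heart of it.

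For the consequences: if $K$ has finite height $n$, then $C(K)$ is $(2n-1)$-separably injective by the above, in particular separably injective. To see it is \emph{not} universally separably injective, invoke Proposition~\ref{(V)}(b): a universally separably injective infinite-dimensional space must contain $\ell_\infty$, hence contain a copy of $c_0$ whose closed span is complemented — in fact it must contain $\ell_\infty$, which is nonseparable and has cardinality $\mathfrak c$ with no weak-$*$-null normalized sequences, etc. But $C(K)$ for $K$ of finite height contains no copy of $\ell_\infty$: indeed $\ell_\infty\cong C(\beta\N)$ and $\beta\N$ has a perfect subset (it has infinite height, its derived sets never stabilize to $\varnothing$), so $C(\beta\N)$ cannot embed into $C(K)$ when $K$ has finite height — one can see this from the fact that a copy of $\ell_\infty$ inside $C(K)$ would force, via the Stone space, a continuous surjection from (a closed subset of) $K$ onto a space of infinite height, contradicting that closed subsets and continuous images appropriately preserve finiteness of height. (Even more simply: $C(K)$ with $K$ scattered of finite height is an Asplund space, or has separable dual when $K$ is countable, whereas $\ell_\infty$ is not Asplund; and in general $C(K)$ for $K$ finite-height scattered does not contain $\ell_1$, hence not $\ell_\infty$.) Thus $C(K)$ is separably but not universally separably injective.

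**The main obstacle.** The delicate point is the norm bookkeeping in the inductive step: making the decomposition $t(y)=r(y)+s(y)$ genuinely bounded with the factor-$2$ loss and no more, and ensuring the two partial extensions recombine into a single operator into $C(K)$ of the right norm. This requires choosing the clopen separation of $L$ from the rest carefully (so that $s(y)$ really does vanish on a neighbourhood of $L$ and $r$ extends isometrically), and verifying that the ``glued'' extension $T$ satisfies $T|_Y=t$ — the pieces must be glued on \emph{disjoint} clopen parts of $K$ so that the sup norm of the sum is the max of the sup norms. Getting the clopen decomposition to interact correctly with the (non-canonical) extensions supplied by the inductive hypothesis is where the real work lies.
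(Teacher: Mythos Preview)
Your inductive step has a genuine gap: the decomposition $t(y)=r(y)+s(y)$ cannot be arranged so that $r$ lands in a finite-dimensional space \emph{and} $s$ lands in $C(K_0)$ with $K_0$ of height $\le n-1$. If $r(y)$ ``records the values on $L=K^{(n-1)}$'' and you extend it back to $K$ (say constantly on disjoint clopen neighbourhoods of the points of $L$), then $s(y)=t(y)-r(y)$ vanishes \emph{at} $L$ but is \emph{not} supported off a neighbourhood of $L$; it lives in $C_0(K\setminus L)$, whose one-point compactification is $K/L$, and $(K/L)^{(n-1)}=\{*\}\neq\varnothing$, so the height is still $n$ and the induction does not close. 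If instead you use the clopen splitting $t(y)=t(y)|_U+t(y)|_{K_0}$, then the first summand takes values in $C(U)$, and $U^{(n-1)}=L\neq\varnothing$, so again height $n$. Either way one piece lands in a space of the same height and the recursion stalls. (An induction peeling off $K'$ rather than $K^{(n-1)}$, using the $M$-ideal sequence $0\to c_0(K\setminus K')\to C(K)\to C(K')\to 0$ together with Sobczyk and the Ando--Choi--Effros lifting, can be made to give \emph{some} constant, but not obviously $2n-1$; your write-up does not set this up.)

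The paper's argument is completely different and bypasses the difficulty. Since $X$ is separable, the range of $t$ is separable, and any separable subspace of $C(K)$ sits isometrically inside $C(L)$ for the metrizable quotient $L$ of $K$ obtained by identifying points that the range cannot separate. Quotients do not increase height, so $L$ is a countable compact metric space of height $\le n$, hence by Mazurkiewicz--Sierpi\'nski homeomorphic to an interval $[0,\omega^r\cdot k]$ with $r<n$. Baker's theorem that $C[0,\omega^r\cdot k]$ is $(2r+1)$-separably injective then extends $t$ inside $C(L)\subset C(K)$ with constant $2r+1\le 2n-1$. The reduction to the metrizable case via the separable range is the idea you are missing.

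Your argument for ``not universally separably injective'' is fine: $K$ of finite height is scattered, so $C(K)$ contains no copy of $\ell_1$ (equivalently, is Asplund), hence no copy of $\ell_\infty$, and Proposition~\ref{(V)}(b) applies.
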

\begin{proof} Let $Y \subset X$ with $X$ separable and let $t: Y \to C(K)$ be a norm one operator.
The range of $t$ is separable and every separable subspace of a
$C(K)$ is contained in an isometric copy of $C(L)$, where $L$ is
the quotient of $K$ after identifying $k$ and $ k'$ when
$y(k)=y(k')$ for all $y \in Y$. This $L$ is metrizable because $Y$
is separable. Moreover, if $K$ has height $n$, then $L$ has height
at most $n$ and so it is homeomorphic to $[0,\omega^r\cdot k]$
with $r<n$, $k<\omega$. Since $C[0,\omega^r\cdot k]$ is
$({2r+1})$-separably injective \cite{bake},  our operator can be
extended to an operator ${T}: X \to C(K)$ with norm
$$
\|{T}\|\leq (2r+1)\|t\| \leq (2n-1)\|t\|,
$$
concluding the proof.\end{proof}

When $K$ is a metrizable compact of finite height $n$,
Baker~\cite{bake} showed that $2n-1$ is the best constant for
separable injectivity, using arguments from Amir~\cite{A}. There
are some difficulties in generalizing those arguments for
nonmetrizable compact spaces, so we do not know if it could exist
a nonmetrizable compact space $K$ of height $n$ such that $C(K)$
is $\lambda$-separably injective for some $\lambda<2n-1$.

\begin{prop} The space of all bounded Borel (respectively, Lebesgue)
measurable functions on the line is 1-separably injective in the
sup norm.\end{prop}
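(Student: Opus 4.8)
The plan is to verify the characterization in Proposition~\ref{people-sep}, namely that the relevant space is $C(K)$ with $K$ an $F$-space; the spaces of bounded Borel and bounded Lebesgue measurable functions on $\R$ are both of the form $C(K)$ for a suitable compact $K$, and the point is to see that $K$ is an $F$-space. First I would recall that the space $B(\R)$ of bounded Borel functions, with the sup norm, is a commutative $C^*$-algebra (it is closed under the pointwise operations and uniform limits, and contains the constants), so by Gelfand duality it is isometrically isomorphic to $C(K)$ where $K$ is its maximal ideal space; the same applies to the space $L^\infty(\R)$ (bounded Lebesgue measurable functions modulo nothing — one takes genuine functions, but the algebra structure is the same), giving $C(K')$ for the corresponding spectrum $K'$. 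In both cases the Boolean algebra of idempotents is, respectively, the Borel $\sigma$-algebra of $\R$ and the $\sigma$-algebra of Lebesgue measurable subsets of $\R$; the Stone space $K$ is the Stone space of that Boolean algebra.

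The key step is then: the Stone space of a $\sigma$-complete Boolean algebra is an $F$-space (indeed it is basically disconnected, i.e.\ the closure of every open $F_\sigma$ set is open, which is a stronger property). Both the Borel $\sigma$-algebra and the Lebesgue $\sigma$-algebra are $\sigma$-complete Boolean algebras — they are closed under countable unions and intersections — so their Stone spaces are $F$-spaces. Concretely, I would argue directly on the function level: given a bounded Borel (resp.\ Lebesgue measurable) function $f$, one can write $f = u\,|f|$ with $u$ bounded Borel (resp.\ measurable), taking $u(x) = f(x)/|f(x)|$ where $f(x)\neq 0$ and, say, $u(x)=1$ where $f(x)=0$; then $u$ is again in the algebra because the set $\{f=0\}$ is Borel (resp.\ measurable) and $u$ is Borel (resp.\ measurable) on its complement. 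By the ``equivalently'' clause in the definition of $F$-space recalled in Subsection~\ref{sub:CKF}, this exactly says that the corresponding $K$ is an $F$-space. Hence by Proposition~\ref{people-sep} the space $C(K)$ — which is $B(\R)$, resp.\ the space of bounded Lebesgue measurable functions — is $1$-separably injective.

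The main obstacle, and the only slightly delicate point, is the identification of the two function spaces as $C(K)$-spaces with $K$ compact: one must be careful that the sup norm makes them complete (clear, since a uniform limit of Borel functions is Borel, and likewise for measurable functions) and that one is genuinely dealing with honest bounded functions — for the Lebesgue case one should stress that we take the space of bounded Lebesgue measurable functions as functions, not the quotient $L^\infty$; but the argument is insensitive to this, since the factorization $f=u|f|$ is pointwise. Everything else — the Gelfand representation and the $\sigma$-completeness of the two $\sigma$-algebras — is standard, and once the $F$-space property of $K$ is in hand, Proposition~\ref{people-sep} finishes the proof immediately. I would therefore present the proof in the short form: observe both spaces are $C^*$-algebras hence $C(K)$-spaces, observe the pointwise factorization $f=u|f|$ stays inside the algebra, invoke the equivalent formulation of ``$F$-space'', and apply Proposition~\ref{people-sep}.
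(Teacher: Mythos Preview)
Your proposal is correct and follows essentially the same route as the paper: represent the spaces as $C(K)$ (the paper cites the Albiac--Kalton characterization rather than Gelfand duality directly, but this is the same point), check the factorization $f=u|f|$ stays inside the algebra to conclude $K$ is an $F$-space, and invoke Proposition~\ref{people-sep}. Your additional remark that the underlying Boolean algebras are $\sigma$-complete, hence the Stone spaces are basically disconnected, is a correct strengthening that the paper does not mention, but it is not needed for the argument.
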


\begin{proof}
Clearly the given spaces are in fact Banach algebras satisfying
the inequality required by Albiac-Kalton characterization (see
\cite{albiac-kalton-book}). Thus they can be represented as $C(K)$
spaces. On the other hand, each measurable function can be
decomposed as $f=u|f|$, with $u$ (and $|f|$, of course)
measurable. This clearly implies that the corresponding compacta
are $F$-spaces.
\end{proof}

Argyros proved in \cite{argyros83} that none of the spaces in the
above example is injective. This is very simple in the Borel case:
the characteristic functions of the singletons generate a copy of
$c_0(\R)$ in the space of bounded Borel functions. The density
character of the latter space is the continuum, as there are
$\mathfrak c$ Borel subsets. Therefore it cannot contain a copy of
$\ell_\infty(\R)$, whose density character is $2^\mathfrak c$.\\

\subsection{M-ideals} A closed subspace
$J\subset X$ is called an $M$-ideal \cite[Definition 1.1]{hww} if
its annihilator $J^\perp=\{x^*\in X^*: \langle x^*,x\rangle=0\quad
\forall x\in J\}$ is an $L$-summand in $X^*$. This just means that
there is a linear projection $P$ on $X^*$ whose range is $J^\perp$
and such that $\|x^*\|=\|P(x^*)\|+\|x^*-P(x^*)\|$ for all $x^*\in
X^*$. The easier examples of $M$-ideals are just ideals in
$C(K)$-spaces. In particular, if $M$ is a closed subset of the
compact space $K$ and $L=K\setminus M$ one has that $C_0(L)$ is an
$M$-ideal in $C(K)$ is straightforward from the Riesz
representation of $C(K)^*$. A remarkable generalization of
Borsuk-Dugundji theorem for $M$-ideals was provided by Ando
\cite{ando} and, independently, Choi and Effros
\cite{choi-effros}. In order to state it let us recall that a
Banach space $Z$ has the $\lambda$-approximation property
($\lambda$-AP, for short) if, for every $\varepsilon > 0$ and
every compact subset $K$ of $Z$, there exists a finite rank
operator $T$ on $Z$, with $\|T\|\leq \lambda$, such that
$\|Tz-z\|< \varepsilon$, for every $z\in K$. We say that $Z$ has
the bounded approximation property (BAP for short) if it has the
$\lambda$-AP, for some $\lambda$.

We refer the reader to \cite{casazza} for background and basic
information about approximation properties and, in particular, the
BAP and the uniform approximation property (UAP in short).

\begin{teor}[\cite{hww}, Theorem 2.1]\label{mideal}

Let $J$ be an $M$-ideal in the Banach space $E$ and $\pi:E\to E/J$
the natural quotient map. Let $Y$ be a separable Banach space and
$t:Y\to E/J$ be an operator.  Assume further that one of the
following conditions is satisfied:
\begin{enumerate}
\item $Y$ has the $\lambda$-AP. \item $J$ is a Lindenstrauss\
space.
\end{enumerate}
Then $t$ can be lifted to $E$, that is, there is an operator $T: Y
\to E$ such that $ \pi T = t$. Moreover one can get $\|T\|\leq
\lambda\|t\|$ under the assumption {\rm(1)} and $\|T\|=\|t\|$
under {\rm(2)}.
\end{teor}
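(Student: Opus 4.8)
The statement is Theorem~2.1 of \cite{hww} (the separable case of the Ando and Choi--Effros lifting theorem for $M$-ideals), so the plan is to reproduce its proof. Normalize $\|t\|=1$. The decisive ingredient is the local characterization of $M$-ideals by the \emph{(restricted) $n$-ball property}: $J$ is an $M$-ideal in $E$ exactly when, for every finite family of closed balls $B(e_i,r_i)\subset E$ ($1\le i\le n$) such that each $B(e_i,r_i)$ meets $J$ and any two of them meet each other, and every $\varepsilon>0$, one has $J\cap\bigcap_i B(e_i,r_i+\varepsilon)\neq\varnothing$. From this I would extract the key \textbf{local lifting lemma}: if $F\subseteq E/J$ is finite-dimensional and $\varepsilon>0$, there is an operator $u\colon F\to E$ with $\pi u=\mathrm{id}_F$ and $\|u\|\le 1+\varepsilon$. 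One starts from an arbitrary linear lifting of a basis of $F$ (available because $F$ is finite-dimensional), which has some finite norm $c>1$, and then \emph{drives the norm towards $1$} by successively subtracting $J$-valued operators: on a sufficiently fine net of the unit sphere of $F$ the $n$-ball property produces elements of $J$ that simultaneously pull the net-images into a definitely smaller ball, and a convexity/averaging step turns this into a genuine linear correction; iterating geometrically brings the norm below $1+\varepsilon$. This iteration is the technical heart of the argument and the step I expect to be the main obstacle.

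With the local lifting lemma in hand, case~(1) is assembled by exhaustion. Using the $\lambda$-AP of $Y$, fix finite-rank operators $R_n\colon Y\to Y$ with $\|R_n\|\le\lambda$ and $R_n\to\mathrm{id}_Y$ pointwise, and (after the standard modification) arrange their ranges to sit in an increasing sequence of finite-dimensional subspaces $F_n\subseteq Y$. Lift $t(F_n)$ almost isometrically by the lemma and splice these liftings with the $R_n$, choosing the errors $\varepsilon_n$ summable so that the resulting sequence of operators $Y\to E$ is Cauchy for the point-norm topology; its limit $T$ satisfies $\pi T=t$, and bookkeeping of the constants gives $\|T\|\le\lambda\|t\|$, as claimed.

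For case~(2) the extra rigidity comes from the Lindenstrauss structure of $J$: its bidual $J^{**}$ is an $L_1$-predual whose unit ball is the $C(K)$ of an extremally disconnected $K$, hence $1$-injective, and $J^{\perp\perp}=J^{**}$ is an $M$-summand of $E^{**}$. Consequently $E^{**}=J^{**}\oplus_\infty W$ with $W\cong (E/J)^{**}$, so the inclusion $W\hookrightarrow E^{**}$ is an isometric section of $\pi^{**}$ and $t$ lifts isometrically into $E^{**}$; the task is then to descend this to an $E$-valued lifting of the same norm, which one does again via the $n$-ball property, exploiting the separability of $Y$. Equivalently, and more in parallel with case~(1), the corrections produced in the local lifting lemma can be confined to subspaces of $E$ that are $1$-complemented (finite-dimensional subspaces of $J$ lie, up to arbitrarily small error, in copies of $\ell_\infty^n$ by Lazar--Lindenstrauss), so in the splicing the errors are absorbed exactly and the limit satisfies $\|T\|=\|t\|$. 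The main obstacle throughout remains the norm-reduction iteration in the local lifting lemma, together with the bookkeeping that upgrades a point-norm-convergent sequence of approximate liftings to an exact one with the advertised constant.
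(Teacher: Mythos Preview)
The paper does not give its own proof of this statement: it is quoted verbatim as a known result from \cite{hww} (Ando, Choi--Effros; see Theorem II.2.1 there) and used as a black box in the subsequent Proposition~\ref{cor:mideal}. So there is no ``paper's proof'' to compare against; your proposal is, as you say, an attempt to reproduce the proof from the cited reference.

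As a sketch of that proof your outline is broadly on target: the $3$-ball (or restricted $n$-ball) characterization of $M$-ideals is indeed the engine, and the assembly in case~(1) via the $\lambda$-AP is the right mechanism. One point to tighten: the splicing step needs a \emph{relative} local lifting lemma, not just the absolute one you state. It is not enough to lift each $t(F_n)$ independently with norm $\le 1+\varepsilon_n$; you must be able to adjust a given lifting on $F_n$ to one on $F_{n+1}$ by adding a $J$-valued correction whose size is controlled (this is where the ball property is invoked at each stage, producing a telescoping series of $J$-valued perturbations that converges). Without this relative version the sequence of partial liftings has no reason to be Cauchy. In case~(2) the cleaner route in \cite{hww} avoids passing through $E^{**}$: one uses directly that a Lindenstrauss $M$-ideal satisfies the strict $n$-ball property (no $\varepsilon$ needed), so the norm-reduction iteration terminates exactly at~$1$ and the same relative scheme yields $\|T\|=\|t\|$. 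Your bidual argument as written only gives a lifting into $E^{**}$, and ``descending'' it to $E$ is not automatic; the separability of $Y$ alone does not suffice without re-running the iterative construction.
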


One has.
\begin{prop}\label{cor:mideal} Let $J$ be an $M$-ideal in a Banach space $E$.
\begin{itemize}
\item[(a)] If $E$ is $\lambda$-(universally) separably injective, then  $E/J$ is $\lambda^2$-(universally) separably injective.
\item[(b)] If $E$ is $\lambda$-separably injective, then $J$ is $2\lambda^2$-separably injective.
\end{itemize}

When $J$  is a Lindenstrauss\ space (which is always the case if
$E$ is), then the exponent 2 disappears. In particular, if $K_1$
is a closed subset of the compact space $K$ and $K_0=K\setminus
K_1$ one has:
\begin{itemize}
\item[(c)] If $C(K)$ is $\lambda$-(universally) separably
injective, then so is $C(K_1)$. \item[(d)] If $C(K)$ is
$\lambda$-separably injective, then $C_0(K_0)$ is
$2\lambda$-separably injective.
\end{itemize}

\end{prop}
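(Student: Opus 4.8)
The plan is to obtain all four statements from the lifting Theorem~\ref{mideal} combined with the (universal) separable injectivity of $E$: a \emph{lift--extend--project} argument yields (a), an \emph{extend--and--correct} argument yields (b), and (c)--(d) are the specializations to the $M$-ideal $J=C_0(K_0)\subset C(K)$ (with $K_0=K\setminus K_1$), which is automatically a Lindenstrauss space (its dual is an $L_1$-space of measures), so that the exponents collapse.

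\emph{Proof of (a).} Let $Y\subset X$ with $X$ separable (so $Y$ is separable; in the universal version $X$ is arbitrary and $Y$ separable by hypothesis) and let $t\colon Y\to E/J$ with $\|t\|\le1$. The reduction is: it is enough to produce \textbf{some} operator $\widehat t\colon Y\to E$ with $\pi\widehat t=t$, because then the (universal) separable injectivity of $E$ extends $\widehat t$ to $T\colon X\to E$ with $\|T\|\le\lambda\|\widehat t\|$, and $\pi T$ extends $t$ with $\|\pi T\|\le\lambda\|\widehat t\|$. When $J$ is a Lindenstrauss space, Theorem~\ref{mideal}(2) provides such a $\widehat t$ with $\|\widehat t\|=\|t\|$, giving the bound $\lambda$; this is the ``exponent $2$ disappears'' case, and together with the facts that $C_0(K_0)$ is an $M$-ideal in $C(K)$ (from the Riesz representation of $C(K)^*$, as recalled above) and is a Lindenstrauss space, it yields (c). For a general $M$-ideal $J$ one cannot lift on $Y$ directly, so one routes the lifting through a space with the metric approximation property: take a metric surjection $q\colon\ell_1\to Y$, apply Theorem~\ref{mideal}(1) to $tq$ (using that $\ell_1$ has the $1$-AP) to get $S\colon\ell_1\to E$ with $\pi S=tq$ and $\|S\|\le\|t\|$, and then correct $S$ so that it vanishes on $N:=\ker q$ (note $S(N)\subset J$) and hence descends to an operator $\widehat t\colon Y\to E$ lifting $t$; the correction is where the second factor $\lambda$ enters, producing $\|\widehat t\|\le\lambda$ and hence the bound $\lambda^2$.

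\emph{Proof of (b).} Given $s\colon W_0\to J$ with $W_0\subset W$ separable, view $s$ as an operator into $E$ and extend it, by separable injectivity of $E$, to $\widetilde s\colon W\to E$ with $\|\widetilde s\|\le\lambda\|s\|$. The defect $\pi\widetilde s\colon W\to E/J$ vanishes on $W_0$, hence factors through the separable quotient $W/W_0$; lift this factor back across the $M$-ideal --- isometrically via Theorem~\ref{mideal}(2) if $J$ is a Lindenstrauss space, otherwise again through an $\ell_1$ and a further correction --- and subtract its composition with $W\to W/W_0$ from $\widetilde s$. The result is an extension of $s$ taking values in $J=\ker\pi$; tracking the constants (the double use of the $\lambda$-extension of $E$ accounts for $\lambda^2$, the correction for the factor $2$) gives $2\lambda^2$, which drops to $2\lambda$ when $J$ is Lindenstrauss, and this case is exactly (d). Finally, the residual remark that $J$ is a Lindenstrauss space whenever $E$ is follows from the $L$-decomposition $E^*=J^\perp\oplus_1\ker P$ defining the $M$-ideal: $\ker P$ is isometric to $J^*$ and is an $L$-summand of $E^*=L_1(\mu)$, hence itself an $L_1$-space.

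The main obstacle is the approximation-property gap in the general case: Theorem~\ref{mideal}(1) lifts operators only from spaces having the AP, whereas the separable spaces occurring ($Y$, $W_0$, $W/W_0$) need not have it. Replacing them by an $\ell_1$ is easy, but one must then genuinely descend the lifting along the non-injective metric surjection $\ell_1\twoheadrightarrow Y$: this requires checking, much as in Lemma~\ref{isom}, that the relevant push-out maps remain (isometric) embeddings, or replacing the naive push-out by one built from honest embeddings, and it is precisely this bookkeeping that keeps the constants down to $\lambda^2$ and $2\lambda^2$ rather than something worse.
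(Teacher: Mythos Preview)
Your treatment of the Lindenstrauss case---and hence of (c) and (d)---is correct and essentially identical to the paper's: lift through the $M$-ideal via Theorem~\ref{mideal}(2), extend using the (universal) separable injectivity of $E$, and project back. The constants come out right.

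The gap is in the general case of (a) and (b). Your $\ell_1$ detour gives $S\colon\ell_1\to E$ with $\pi S=tq$ and $S(N)\subset J$, but the ``correction'' you allude to cannot be carried out. To make $S$ vanish on $N$ you would need to extend $S|_N\colon N\to J$ to an operator $\ell_1\to J$; extending instead into $E$ (which is all the separable injectivity of $E$ buys you) produces a map that no longer lifts $t$ after subtraction, since the extension need not land in $J$. Equivalently, the push-out sequence $0\to J\to\PO\to Y\to 0$ you are implicitly trying to split is precisely the obstruction in $\Ext(Y,J)$ to lifting $t$, and nothing in your argument forces it to vanish. This is not bookkeeping---it is the heart of the matter, and your last paragraph underestimates it.

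The paper sidesteps this by a different route: from Proposition~\ref{(V)} one knows $E^{**}$ is $\lambda$-injective, hence has the $\lambda$-AP; the $M$-decomposition $E^{**}=J^{**}\oplus_\infty(E/J)^{**}$ then gives the $\lambda$-AP to $(E/J)^{**}$ and therefore to $E/J$. Now take a separable subspace $S\subset E/J$ containing $t(Y)$; by \cite[Theorem~9.7]{casazza} one may enlarge $S$ so that it has the $\lambda$-AP, and Theorem~\ref{mideal}(1) applies directly to the inclusion $S\hookrightarrow E/J$, giving a lift $s\colon S\to E$ with $\|s\|\le\lambda$. Then $st\colon Y\to E$ is the honest lift of $t$ you were after, with norm $\le\lambda\|t\|$, and extending it yields the bound $\lambda^2$. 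The same $\lambda$-AP manoeuvre handles (b). So the missing idea is: separable injectivity of $E$ already forces $E/J$ to have the $\lambda$-AP, which lets you apply Theorem~\ref{mideal}(1) on a well-chosen separable subspace of $E/J$ rather than trying to descend from $\ell_1$.
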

\begin{proof}
(a) By (the proof of) Proposition~\ref{(V)}, $E^{**}$ is
$\lambda$-injective and so it has the $\lambda$-AP. Since
$E^{**}=J^{**}\oplus_\infty (E/J)^{**}$  we see that also $J^{**}$
and $(E/J)^{**}$ have the $\lambda$-AP. Hence both $J$ and $(E/J)$
have the  $\lambda$-AP. Let $Y$ be a separable subspace of $X$ and
$t:Y\To E/J$ an operator. Let $S$ be a separable subspace of $E/J$
containing the image of $t$. By \cite[Theorem 9.7]{casazza} we may
assume $S$ has the $\lambda$-AP. Let $s: S\To E$ be the lifting
provided by Theorem~\ref{mideal}, so that $\|s\|\leq \lambda$.
Now, if $T:X\To E$ is an extension of $st$, then $\pi T:X\To E/J$
is an extension of $t$, and this can be achieved with $\|\pi
T\|=\|T\|\leq \lambda^2 \|t\|$.

(d) --and (b)--. Let us remark that if $S$ is a subspace of $C(K)$
containing $C_0(K_0)$ and $S/C_0(K_0)$ is separable, then there is
a projection $p:S\To C_0(K_0)$ of norm at most 2. Indeed,
$S/C_0(K_0)$ is a separable subspace of $C(K_1)$ and there is a
lifting $s: S/C_0(K_0)\To C(K)$, with $\|s\|=1$, and ${p=\bf
1}_S-sr$ is the required projection. Now,  let $t:Y\To C_0(K_0)$
be an operator, where $Y$ is a subspace of a separable Banach
space $X$. Considering $t$ as taking values in $C(K)$, there is an
extension $T:X\To C(K)$ with $\|T\|\leq\lambda\|t\|$. Let $S$
denote the least closed subspace of $C(K)$ containing the range of
$T$ and $C_0(K_0)$ and $p:S\To C_0(K_0)$ a projection with
$\|p\|\leq 2$. The composition $pT:X\To C_0(K_0)$ is an extension
of $t$ and clearly, $\|pT\|\leq 2\lambda\|t\|$.
\end{proof}

\subsection{Ultraproducts of type $\mathcal L_\infty$}\label{ch:ultraproducts}
Let us briefly recall the definition and some basic properties of
ultraproducts of Banach spaces. For a detailed study of this
construction at the elementary level needed here we refer the
reader to Heinrich's survey paper \cite{heinrich} or Sims' notes
\cite{sims}. Let $I$ be a set, $\U$ be an ultrafilter on $I$, and
$\Xii$ a family of Banach spaces. Then $ \ell_\infty(X_i)$ endowed
with the supremum norm, is a Banach space, and $ c_0^\U(X_i)=
\{(x_i) \in \ell_\infty(X_i) : \lim_{\U(i)} \|x_i\|=0\} $ is a
closed subspace of $\ell_\infty(X_i)$. The ultraproduct of the
spaces $\Xii$ following $\U$ is defined as the quotient
$$
[X_i]_\U = {\ell_\infty(X_i)}/{c_0^\U(X_i)}.
$$
We denote by $[(x_i)]$ the element of $[X_i]_\U$ which has the
family $(x_i)$ as a representative. It is not difficult to show
that $ \|[(x_i)]\| = \lim_{\U(i)} \|x_i\|. $ In the case $X_i = X$
for all $i$, we denote the ultraproduct by $X_\U$, and call it the
ultrapower of $X$ following $\U$. If $T_i:X_i\to Y_i$ is a
uniformly bounded family of operators, the ultraproduct operator
$[T_i]_\U: [X_i]_\U\to [Y_i]_\U$ is given by $[T_i]_\U[(x_i)]=
[T_i(x_i)]$. Quite clearly, $ \|[T_i]_\U\|= \lim_{\U(i)}\|T_i\|. $

\begin{defin}\label{CI}
An ultrafilter $\U$ on a set $I$ is countably incomplete if there
is a decreasing sequence $(I_n)$ of subsets of $I$ such that
$I_n\in \U$ for all $n$, and $\bigcap_{n=1}^\infty
I_n=\varnothing$.
\end{defin}

Notice that $\U$ is countably incomplete if and only if there is a
function $n:I\to \N$ such that $n(i)\to\infty$ along $\U$
(equivalently, there is a family $\e(i)$ of strictly positive
numbers converging to zero along $\U$). It is obvious that any
countably incomplete ultrafilter is non-principal  and also that
every non-principal (or free) ultrafilter on $\N$ is countably
incomplete. Assuming all free ultrafilters  countably incomplete
is consistent with {\textsf{ZFC}}, since the cardinal of a set
supporting a free countably complete ultrafilter should
be measurable, hence strongly inaccessible.\\

It is clear that the classes of $\mathcal{L}_{p,\lambda^+}$ spaces
are stable under ultraproducts \cite[Proposition
1.22]{BourgainLNM}. In the opposite direction, a Banach space is a
$\mathcal{L}_{p,\lambda^+}$ space if and only if some (or every)
ultrapower is. In particular, a Banach space is an
$\mathcal{L}_{\infty}$ space or a Lindenstrauss\  space if and
only if so are its ultrapowers; see, e.g., \cite{heinrichL1}. It
is possible however to produce a Lindenstrauss\ space out of
non-even-$\mathcal L_\infty$-spaces: indeed, if $p(i)\to\infty$
along $\U$, then the ultraproduct $[L_{p(i)}]_\U$ is a
Lindenstrauss\ space (and, in fact, an abstract $\mathcal
M$-space; see \cite[Lemma 3.2]{c:M}).

The following result about the structure of separable subspaces of
ultraproducts of type $\mathcal L_\infty$ will be fundamental for
us.

\begin{lema}\label{S}
Supppose $\XiU$ is an $\mathcal L_{\infty, \lambda^+}$-space. Then
each separable subspace of $\XiU$ is contained in a subspace of
the form $[F_i]_\U$, where $F_i\subset X_i$ is finite dimensional
and $\lim_{\U(i)}d(F_i,\ell_\infty^{k(i)})\leq \lambda$, where
$k(i)=\dim F_i$.
\end{lema}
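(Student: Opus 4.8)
The plan is to build the subspace $[F_i]_\U$ by exhausting a countable dense set of the given separable subspace and, at each stage, enlarging a family of finite-dimensional subspaces $F_i\subset X_i$ so as to capture one more vector while keeping control of the Banach--Mazur distance to $\ell_\infty^{k(i)}$. First I would fix a countable dense sequence $(\xi^{(m)})_{m\in\N}$ in the separable subspace $S\subset\XiU$, and choose for each $m$ a bounded representative $\xi^{(m)}=[(x^{(m)}_i)_i]$ with $\sup_i\|x^{(m)}_i\|<\infty$. The idea is to produce, for each $i\in I$, an increasing chain of finite-dimensional subspaces $F_i^{(1)}\subset F_i^{(2)}\subset\cdots\subset X_i$ and then amalgamate them into a single family $F_i$, using a countable incompleteness sequence $(I_n)$ of the ultrafilter to decide, for each $i$, how far along the chain to go. Concretely: since $\XiU$ is an $\mathcal L_{\infty,\lambda^+}$-space, so is $\ell_\infty(X_i)/c_0^\U(X_i)$, and a short argument shows that $\U$-almost all $X_i$ are $\mathcal L_{\infty,\lambda'}$-spaces for every $\lambda'>\lambda$ (otherwise one could build a finite-dimensional subspace of the ultraproduct witnessing a larger constant).

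The core construction proceeds by induction on $m$. Suppose $F_i^{(m-1)}\subset X_i$ has been chosen (with $F_i^{(0)}=0$). Pick $\lambda_m\downarrow\lambda$. For $\U$-almost every $i$ we want a finite-dimensional $G_i\supset F_i^{(m-1)}$ with $x^{(m)}_i\in G_i$ and $d(G_i,\ell_\infty^{\dim G_i})\le\lambda_m$: since $X_i$ is an $\mathcal L_{\infty,\lambda_m}$-space for $\U$-almost all $i$, the finite-dimensional space $F_i^{(m-1)}+\span\{x^{(m)}_i\}$ sits inside such a $G_i$, and we set $F_i^{(m)}=G_i$. Then I would define $F_i=F_i^{(n(i))}$ where $n\colon I\to\N$ is the function coming from countable incompleteness, i.e.\ $n(i)\to\infty$ along $\U$, adjusted so that $i\in I_{n(i)}$ and, on the set $I_m$, the subspace $F_i^{(m)}$ is already defined and has the distance bound $\lambda_m$. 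By construction $\dim F_i<\infty$, and $\lim_{\U(i)}d(F_i,\ell_\infty^{\dim F_i})\le\lim_m\lambda_m=\lambda$, since on each $I_m\in\U$ the distance is at most $\lambda_m$. It remains to check $S\subset[F_i]_\U$: for a fixed $m$, eventually (on $I_m\in\U$) one has $n(i)\ge m$ and hence $x^{(m)}_i\in F_i$, so $\xi^{(m)}=[(x^{(m)}_i)]$ lies in $[F_i]_\U$; since $[F_i]_\U$ is closed and $(\xi^{(m)})$ is dense in $S$, we get $S\subseteq[F_i]_\U$.

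The main obstacle, and the step deserving the most care, is the bookkeeping that synchronizes three things at once: the countable incompleteness sequence $(I_n)$, the approximating sequence $\lambda_m\downarrow\lambda$, and the nested families $(F_i^{(m)})_m$ — one must choose the function $n(i)$ so that $i\in I_{n(i)}$ forces $F_i^{(n(i))}$ to be among the spaces already constructed with distance bound $\lambda_{n(i)}$, while still having $n(i)\to\infty$ along $\U$. This is routine once set up correctly, but it is where the argument could go wrong: one needs the sets $\{i: n(i)\ge m\}$ to belong to $\U$ for every $m$, which follows by arranging $\{i:n(i)\ge m\}\supseteq I_m$. A secondary point requiring a line or two is the passage from "$\XiU$ is an $\mathcal L_{\infty,\lambda^+}$-space" to "$\U$-almost all $X_i$ are $\mathcal L_{\infty,\lambda'}$-spaces," which is a standard local-reflexivity/ultraproduct transfer argument (a finite-dimensional obstruction in enough of the factors would produce one in the ultraproduct) and can be cited from \cite{heinrich} or \cite{BourgainLNM}.
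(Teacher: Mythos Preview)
Your bookkeeping with the countable-incompleteness sequence and the nested $F_i^{(m)}$ is fine, but the step you flag as ``secondary'' is in fact wrong, and it is the heart of your argument. The transfer you propose---that if $\XiU$ is an $\mathcal L_{\infty,\lambda^+}$-space then $\U$-almost all $X_i$ are $\mathcal L_{\infty,\lambda'}$-spaces---fails for ultraproducts (it is true for ultra\emph{powers}). The paper itself supplies the counterexample just before this Lemma: if $p(i)\to\infty$ along $\U$, then $[L_{p(i)}]_\U$ is a Lindenstrauss space, hence $\mathcal L_{\infty,1^+}$, while no individual $L_{p(i)}$ is even an $\mathcal L_\infty$-space. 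Your heuristic (``a finite-dimensional obstruction in enough factors would produce one in the ultraproduct'') breaks down because a finite-dimensional subspace $[E_i]_\U$ of $\XiU$ sitting inside a nice $F=[F_i]_\U$ with $d(F_i,\ell_\infty^{\dim F_i})$ small does \emph{not} force $E_i\subset F_i$ for $\U$-many $i$: representatives are only determined modulo $c_0^\U$, so the containment happens only at the ultraproduct level.

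The paper avoids this by never descending the $\mathcal L_\infty$ property to the factors. Instead it uses the $\mathcal L_{\infty,\lambda^+}$ hypothesis \emph{in} $\XiU$: for each $n$ it finds a finite-dimensional $F^n\subset\XiU$ containing $\span\{s^1,\dots,s^n\}$ with $d(F^n,\ell_\infty^{\dim F^n})\leq\lambda+1/n$, then represents $F^n$ as $[F_i^n]_\U$ with $F_i^n\subset X_i$ finite-dimensional (choosing representatives of a basis of $F^n$ extending the chosen representatives of $s^1,\dots,s^n$). The standard fact that $d(F^n,[F_i^n]_\U)=\lim_{\U(i)}d(F_i^n,\ell_\infty^{\dim F^n})$ yields $\U$-many $i$ with $d(F_i^n,\ell_\infty^{\dim F^n})\leq\lambda+2/n$, and from there the amalgamation via a function $k:I\to\N$ proceeds exactly as in your outline. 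The moral is that the right object to apply the $\mathcal L_{\infty,\lambda^+}$ definition to is $\XiU$, not the $X_i$.
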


\begin{proof} Let us assume $S$ is an infinite-dimensional separable subspace of $\XiU$.
Let $(s^n)$ be a linearly independent sequence spanning a dense
subspace in $S$ and, for each $n$, let $(s_i^n)$ be a fixed
representative of $s^n$ in $\ell_\infty(X_i)$. Let
$S^n=\span\{s^1,\dots,s^n\}$. Since $\XiU$ is an $\mathcal
L_{\infty,\lambda^+}$-space there is, for each $n$, a finite
dimensional $F^n\subset \XiU$ containing $S^n$ with
$d(F^n,\ell_\infty^{\dim F^n })\leq \lambda+1/n$.

For fixed $n$, let $(f^m)$ be a basis for $F^n$ containing
$s^1,\dots, s^n$. Choose representatives $(f_i^m)$ such that
$f_i^m=s_i^\ell$ if $f^m=s^\ell$. Moreover, let $F_i^n$ be the
subspace of $X_i$ spanned by $f_i^m$ for $1\leq m\leq \dim F^n$.

Let $(I_n)$ be a decreasing sequence of subsets $I_n\in \U$ such
that  $\bigcap_{n=1}^\infty I_n=\varnothing$. For each integer $n$
put$$ J'_n=\{i\in I: d(F_i^n, \ell_\infty^{\dim F^n}) \leq
\lambda+2/n \}\cap I_n
$$
and $J_m=\bigcap_{n\leq m}J'_n$. All these sets are in $\U$.
Finally, set $J_\infty=\bigcap_{n}J_n$. Next we define a function
$k:I\to \N$. Set
$$
k(i)=\begin{cases}1& i\in J_\infty\\
\sup\{n:i\in J_n\}& i\notin J_\infty\\
\end{cases}
$$For each $i\in I$, take $F_i=F_i^{k(i)}$. This is a
finite-dimensional subspace of $X_i$ whose Banach-Mazur distance
to the corresponding $\ell_\infty^k$ is at most $\lambda+2/k(i)$.
It is clear that $[F_i]_{\mathscr U}$ contains $S$ and also that
$k(i)\to\infty$ along $\U$, which completes the proof.
\end{proof}

\begin{teor}\label{ultra-Linf}
Let $\Xii$ be a family of Banach spaces such that $\XiU$ is a
$\mathcal L_{\infty,\lambda^+}$-space. Then $\XiU$ is
$\lambda$-universally separably injective.
\end{teor}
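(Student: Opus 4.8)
The plan is to take an arbitrary separable subspace $Y$ of an arbitrary Banach space $X$ and a norm-one operator $t\colon Y\to [X_i]_\U$, and extend $t$ to $X$ with norm at most $\lambda+\e$ for every $\e>0$ (the usual $\lambda^+$-type conclusion, which is what being $\lambda$-universally separably injective with the ``$+$'' reading amounts to here). First I would use Lemma~\ref{S}: since the separable range $t(Y)$ lives in $[X_i]_\U$, there is a subspace of the form $[F_i]_\U$ with $F_i\subset X_i$ finite dimensional, $\lim_{\U(i)}d(F_i,\ell_\infty^{k(i)})\le\lambda$ and $k(i)=\dim F_i\to\infty$ along $\U$, containing $t(Y)$. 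So without loss of generality $t$ takes values in $[F_i]_\U$, and it suffices to extend an operator into $[F_i]_\U$.

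The heart of the argument is a local, coordinate-wise extension followed by an ultraproduct limit. For each $i$, fix an isomorphism $u_i\colon F_i\to \ell_\infty^{k(i)}$ with $\|u_i\|\,\|u_i^{-1}\|\le d(F_i,\ell_\infty^{k(i)})$, normalized so that $\|u_i^{-1}\|=1$ and $\|u_i\|\le d(F_i,\ell_\infty^{k(i)})$. Each $\ell_\infty^{k(i)}$ is $1$-injective, so the composite $Y\to[F_i]_\U$ ``coordinate $i$'' map — made precise below — extends to $X$ coordinatewise with good control on the norm. Concretely: pick for $t$ a ``representative'' family, i.e.\ lift $t$ through the quotient $\ell_\infty(X_i)\to[X_i]_\U$ to get, for each $y\in Y$, a bounded family $(t_i(y))_i$ with $[t_i(y)]=t(y)$; one must do this on a countable dense set, or better, realize $t$ as $[t_i]_\U$ for a uniformly bounded family of operators $t_i\colon Y\to F_i$ with $\limsup_\U\|t_i\|\le 1$. (That a separable-domain ultraproduct-valued operator can be so represented is a standard fact: build the $t_i$ on a dense sequence and use separability of $Y$; this is where I would be careful but expect no real trouble.) Then, since $\ell_\infty^{k(i)}$ is $1$-injective, extend $u_it_i\colon Y\to\ell_\infty^{k(i)}$ to $T_i\colon X\to\ell_\infty^{k(i)}$ with $\|T_i\|=\|u_it_i\|\le\|u_i\|\,\|t_i\|$. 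Set $S_i=u_i^{-1}T_i\colon X\to F_i\subset X_i$, so $\|S_i\|\le\|u_i\|\,\|t_i\|$ and $S_i$ extends $t_i$. Now $(S_i)_i$ is uniformly bounded since $\limsup_\U\|S_i\|\le \bigl(\lim_\U d(F_i,\ell_\infty^{k(i)})\bigr)\cdot\limsup_\U\|t_i\|\le\lambda$, so the ultraproduct operator $T=[S_i]_\U\colon X\to[X_i]_\U$ is well defined with $\|T\|\le\lambda$, and for $y\in Y$ we have $T(y)=[S_i(y)]=[t_i(y)]=t(y)$, i.e.\ $T$ extends $t$.

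The main obstacle — really the only subtle point — is the representation step: upgrading a single operator $t\colon Y\to[X_i]_\U$ into a uniformly bounded family $t_i\colon Y\to X_i$ with $t=[t_i]_\U$ in the operator sense, not merely pointwise on a dense set. The fix is to choose a dense sequence $(y_n)$ in the unit ball of $Y$, lift the vectors $t(y_n)$ simultaneously using the countable incompleteness of $\U$ (the sets $I_n$ and the function $n(i)\to\infty$) so that on $I_n$ the finite partial data $\{t(y_1),\dots,t(y_n)\}$ is represented with error below $1/n$ and the partial linear relations among the $y_1,\dots,y_n$ are respected up to $1/n$; then glue these finite-dimensional partial maps into operators $t_i$ defined on all of $Y$ (extending from $\span\{y_1,\dots,y_{n(i)}\}$ to $Y$ by Hahn--Banach-type boundedness), with $\|t_i\|\le 1+1/n(i)\to 1$ along $\U$. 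This is exactly the pattern already used inside the proof of Lemma~\ref{S}, so I would either cite that lemma's technique directly or, cleanest of all, simply invoke Lemma~\ref{S} to reduce to the case of an operator into $[F_i]_\U$ and then note that an operator from a separable space into an ultraproduct of finite-dimensional spaces is automatically of the form $[t_i]_\U$ (each $t(y)$ has a canonical representative once a basis of $Y$'s dense span is fixed, and uniform boundedness is immediate from $\|[(x_i)]\|=\lim_\U\|x_i\|$). Once that bookkeeping is in place, the $1$-injectivity of $\ell_\infty^{k(i)}$ does all the real work and the norm estimate $\|T\|\le\lambda$ drops out of $\lim_\U d(F_i,\ell_\infty^{k(i)})\le\lambda$.
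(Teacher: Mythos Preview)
Your argument is correct, but it takes a different second half from the paper's. Both proofs start with Lemma~\ref{S} to trap the range of $t$ inside a subspace $[F_i]_\U$ with $\lim_\U d(F_i,\ell_\infty^{k(i)})\le\lambda$. From there the paper does \emph{not} represent $t$ as an ultraproduct of operators. Instead it argues abstractly: a space is $\lambda$-universally separably injective as soon as every separable subspace sits inside a $\lambda$-universally separably injective subspace, so it suffices to show that $[F_i]_\U$ itself has this property. Since $[F_i]_\U$ is $\lambda$-isomorphic to $[\ell_\infty^{k(i)}]_\U$, one only needs that $[\ell_\infty^{k(i)}]_\U$ is $1$-universally separably injective. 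This is obtained in one line by identifying $\ell_\infty(I,\ell_\infty^{k(i)})$ with $\ell_\infty(\Gamma)=C(\beta\Gamma)$ for $\Gamma=\coprod_i\{1,\dots,k(i)\}$ and noting that $c_0^\U(\ell_\infty^{k(i)})$ is an ideal in this $C(K)$-space, so Proposition~\ref{cor:mideal}(a) applies.

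Your route---lifting $t$ to a family $t_i:Y_{n(i)}\to F_i$ via countable incompleteness, extending each $u_it_i$ to $X$ by $1$-injectivity of $\ell_\infty^{k(i)}$, and reading off the extension $x\mapsto[(u_i^{-1}T_i x)]$---is more hands-on and entirely self-contained: it needs nothing beyond Hahn--Banach and the definition of the ultraproduct, whereas the paper's proof rests on the $M$-ideal lifting theorem of Ando and Choi--Effros hidden inside Proposition~\ref{cor:mideal}. The price you pay is the representation/bookkeeping step you correctly flag as the only subtlety (and which your sketch handles adequately: work on the finite-dimensional spans $Y_{n(i)}$, extend directly to $X$, and verify agreement with $t$ on $Y$ by density). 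The paper's approach, in exchange for invoking heavier machinery, avoids that step completely and isolates Lemma~\ref{kinf} as a clean standalone statement.
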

\begin{proof}
It is clear that a Banach space is $\lambda$-universally separably
injective if and only if every separable subspace is contained in
some larger $\lambda$-universally separably injective subspace. By
the previous lemma, everything that has to be proved is:

\begin{lema}\label{kinf}
For every function $k:I\to\N$, the space
$[\ell_\infty^{k(i)}]_{\U}$ is 1-universally separably injective.
\end{lema}

\begin{proof} Let $\Gamma$ be the disjoint union of the sets $\{1,2,\dots, k(i)\}$ viewed as a discrete set.
Now observe that $c_0^\U(\ell_\infty^{k(i)})$ is an ideal in
$\ell_\infty(\ell_\infty^{k(i)})=\ell_\infty(\Gamma)=C(\beta\Gamma)$
and apply Proposition~\ref{cor:mideal}(a).
\end{proof}
\end{proof}

\begin{cor}\label{lind2-sep}
Let $(X_i)_{i\in I}$ be a family of Banach spaces. If $[X_i]_\U$
is a Lindenstrauss\ space, then it is $1$-universally separably
injective.
\end{cor}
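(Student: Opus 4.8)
The plan is to deduce the statement directly from Theorem~\ref{ultra-Linf}. Recall from the Background section that the Lindenstrauss\ spaces are exactly the $\mathcal{L}_{\infty,1+}$-spaces, i.e., the $\mathcal{L}_{\infty,\lambda^+}$-spaces with $\lambda=1$. So if $[X_i]_\U$ is a Lindenstrauss\ space, then it is in particular a $\mathcal{L}_{\infty,1^+}$-space, and Theorem~\ref{ultra-Linf} applied with $\lambda=1$ gives at once that $[X_i]_\U$ is $1$-universally separably injective.

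If one wishes to avoid invoking Theorem~\ref{ultra-Linf} as a black box, I would instead retrace its proof in the case $\lambda=1$. First I would apply Lemma~\ref{S}: every separable subspace of $[X_i]_\U$ sits inside some $[F_i]_\U$ with $F_i\subset X_i$ finite dimensional and $\lim_{\U(i)}d(F_i,\ell_\infty^{k(i)})\leq 1$, where $k(i)=\dim F_i$. Since the Banach--Mazur distance is always at least $1$, this limit is exactly $1$. Then I would pick isomorphisms $T_i\colon F_i\to\ell_\infty^{k(i)}$ with $\|T_i\|=1$ and $\|T_i^{-1}\|\to 1$ along $\U$ (on the $\U$-negligible set of indices where no near-isometry is available, any isomorphism will do, its effect being washed out by the ultrafilter), and observe that the ultraproduct operator $[T_i]_\U$ is an isometry of $[F_i]_\U$ onto $[\ell_\infty^{k(i)}]_\U$, since $\|[T_i]_\U\|=\lim_{\U(i)}\|T_i\|=1$ and $\|[T_i]_\U^{-1}\|=\lim_{\U(i)}\|T_i^{-1}\|=1$. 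Lemma~\ref{kinf} tells us $[\ell_\infty^{k(i)}]_\U$ is $1$-universally separably injective, hence so is $[F_i]_\U$, and therefore (every separable subspace of $[X_i]_\U$ being contained in such a subspace, and this being precisely the criterion recalled at the start of the proof of Theorem~\ref{ultra-Linf}) $[X_i]_\U$ is $1$-universally separably injective.

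I do not expect a genuine obstacle: the corollary is essentially an immediate specialization of Theorem~\ref{ultra-Linf}. The only point that deserves attention — and it is already handled in the sketch above — is that for $\lambda=1$ the inequality $\lim_{\U(i)}d(F_i,\ell_\infty^{k(i)})\leq 1$ must be upgraded to an \emph{isometric} identification of $[F_i]_\U$ with $[\ell_\infty^{k(i)}]_\U$ (not merely an isomorphic one), which is exactly what the ultraproduct of the near-isometries $T_i$ achieves; everything else is a direct appeal to results already proved in this section.
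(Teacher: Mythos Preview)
Your proposal is correct and matches the paper's approach: the corollary is stated without proof precisely because it is the immediate specialization of Theorem~\ref{ultra-Linf} to $\lambda=1$, using the identification of Lindenstrauss\ spaces with $\mathcal L_{\infty,1^+}$-spaces recalled in the Background section. Your additional unpacking in the second and third paragraphs is accurate but unnecessary for the corollary as stated.
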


\begin{remark} Ultraproducts of type $\mathcal
L_\infty$ are universally separably injective, while an infinite
dimensional ultraproduct via a countably incomplete ultrafilter is
never injective (see \cite[Theorem 2.6]{hensonmoore}; and also
\cite[ Section~8]{sims}.
\end{remark}

\section{Two characterizations of universally separably injective spaces}\label{usispaces}
In Proposition~\ref{(V)} (b) it was proved that universally
separably injective spaces contain $\ell_\infty$. Much more is
indeed true:

\begin{teor}\label{Thetastar=Sinfty}
An infinite-dimensional Banach space is universally separably injective if and only if
it is $\ell_\infty$-upper-saturated.
\end{teor}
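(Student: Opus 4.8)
The plan is to prove the two implications separately. The easy direction is already observed in the paper: if $E$ is $\ell_\infty$-upper-saturated, then given a separable $Y\subset X$ and $t:Y\to E$, one may first note that $t(Y)$ is separable, hence contained in a copy $W\cong\ell_\infty$ inside $E$; then, since $\ell_\infty$ is injective, $t$ viewed as an operator into $W$ extends to all of $X$, and composing with the inclusion $W\hookrightarrow E$ gives the desired extension. So the real content is the forward direction: assuming $E$ is universally separably injective (hence, by the remarks following the definition, universally $\lambda$-separably injective for some $\lambda$), I must show that every separable subspace $S\subset E$ sits inside a copy of $\ell_\infty$ inside $E$.

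For this direction, the idea is to build the copy of $\ell_\infty$ by an internal inductive construction that "absorbs" $S$. Start with $S_0=S$. The guiding principle is that $\ell_\infty$ is characterized (among separable-free, relevant spaces) by a suitable enlargement/extension property; concretely, I would use that $E$ being universally separably injective gives, for every separable $Y\subset E$, an extension of $\mathbf{1}_Y$ along any isometric embedding of $Y$ into any separable superspace — in particular into $C[0,1]$, which contains an isometric copy of $\ell_\infty$. But a single step only produces a copy of $\ell_\infty$ receiving $S$ by an operator, not an inclusion. So instead I would iterate: using Proposition~\ref{(V)}(b) (which says $E$ contains $\ell_\infty$ and more precisely, from its proof, that any separable subspace of $E$, regarded inside $\ell_\infty$, admits an operator $J:\ell_\infty\to E$ extending the inclusion) together with a back-and-forth argument. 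At stage $n$, having a separable $S_n\subset E$, embed $S_n$ into $\ell_\infty$ and extend the inclusion to $J_n:\ell_\infty\to E$; then let $S_{n+1}$ be a separable subspace of $E$ containing both $S_n$ and a countable norming set of data recording the behaviour of $J_n$, arranged so that in the limit the maps $J_n$ glue into an isometric (or at least isomorphic) embedding of $\ell_\infty$ whose range contains $\bigcup_n S_n\supset S$.

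The main obstacle — and the step I expect to require care — is controlling the norms and the gluing so that the limiting map is an \emph{isomorphic embedding} onto a subspace of $E$ that genuinely \emph{contains} $S$, rather than merely a bounded operator from $\ell_\infty$ into $E$ whose range contains $S$. Two points need attention: (i) the extension constant $\lambda$ is fixed along the construction, so the composed maps do not blow up, and one can pass to the pointwise-limit operator on $\ell_\infty$ using weak$^*$-compactness of bounded sets in $E^{**}$ — but one must then check the limit actually lands in $E$ and is bounded below; and (ii) the copy of $\ell_\infty$ obtained a priori need only be isomorphic, which is fine for the statement, so one can afford crude constants. I would reconcile (i) by using that $\ell_\infty$ is injective and a finite-dimensional-by-finite-dimensional exhaustion: on each $\ell_\infty^n\subset\ell_\infty$ the restriction of the glued map is an isomorphic embedding with uniform constants, and a standard principle of local determination upgrades this to $\ell_\infty$. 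Once the embedding $\iota:\ell_\infty\to E$ is in hand with $S\subset \iota(\ell_\infty)$, the proof is complete; and conversely the construction shows the constant can be taken to depend only on $\lambda$, which is what Proposition~\ref{quotsi}(3) will later exploit.
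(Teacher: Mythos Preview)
The easy direction is fine and matches the paper. The hard direction, however, has a genuine gap: your iterative scheme never supplies a mechanism that forces the limiting operator $\ell_\infty\to E$ to be an \emph{embedding} whose range \emph{contains} $S$.

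Concretely: each $J_n:\ell_\infty\to E$ is merely an operator extending the inclusion $S_n\hookrightarrow E$; nothing prevents $J_n$ from collapsing large pieces of $\ell_\infty$. The maps $J_n$ and $J_{n+1}$ agree only on (the image of) $S_n$, so there is no compatibility that would make them ``glue''. Passing to a weak* limit in $E^{**}$ does not help --- you correctly flag that there is no reason for the limit to land in $E$, and you give no argument that it would. The sentence ``on each $\ell_\infty^n\subset\ell_\infty$ the restriction of the glued map is an isomorphic embedding with uniform constants'' is unsupported: the $J_n$ are controlled only on the separable $S_n$, not on the finite-dimensional $\ell_\infty^k$'s, so no uniform lower bound is available. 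Finally, ``a countable norming set of data recording the behaviour of $J_n$'' cannot be made sense of, since $J_n(\ell_\infty)$ is typically non-separable; you cannot shrink back to a separable $S_{n+1}$ carrying that information.

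The paper's proof avoids iteration entirely and uses a different idea you are missing. Using Rosenthal's property $(V)$ (Proposition~\ref{(V)}(b)), one finds a \emph{complemented} copy $M\cong\ell_\infty$ inside $E$ that is transversal to $S$: there is a projection $p$ on $E$ with $S\subset\ker p$ and $\operatorname{ran}p=M$. One then extends the inclusion $S\hookrightarrow\ker p$ to $v:\ell_\infty\to\ker p$ (here universal separable injectivity of the complemented subspace $\ker p$ is used), picks a ``back'' map $u:E\to\ell_\infty$ extending the inverse identification on $S$, and an embedding $w:\ell_\infty\to M$. The single operator
\[
T=v+w(\mathbf{1}_{\ell_\infty}-uv):\ell_\infty\To E
\]
does the job: on (the copy of) $S$ one has $uv=\mathbf{1}$, so $T=v$ equals the inclusion there; and since $v$ lands in $\ker p$ while $w(\cdot)$ lands in $\operatorname{ran}p$, a short dichotomy shows $\|Tx\|\geq c\|x\|$ for all $x$. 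The crucial ingredient your proposal lacks is precisely this transversal complemented $\ell_\infty$, which turns the problem from an uncontrolled limit into a one-step algebraic correction.
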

\begin{proof}
The sufficiency is a consequence of the injectivity of
$\ell_\infty$. In order to show the necessity, let $Y$ be a
separable subspace of a universally separably injective space $X$.
We consider a subspace $Y_0$ of $\ell_\infty$ isomorphic to $Y$
and an isomorphism $t:Y_0\to Y$. We can find projections $p$ on
$X$ and $q$ on $\ell_\infty$ such that $Y\subset \ker p,
Y_0\subset \ker q$, and both $p$ and $q$ have  range  isomorphic
to $\ell_\infty$.

Indeed, let $\pi:X\to X/Y$ be the quotient map. Since $X$ contains
$\ell_\infty$ and $Y$ is separable, $\pi$ is not weakly compact
so, by Proposition~\ref{(V)}(b), there exists a subspace $M$ of $X$
isomorphic to $\ell_\infty$ where $\pi$ is an isomorphism.
Now $X/Y = \pi(M)\oplus N$, with $N$ a closed subspace. Hence $X = M \oplus \pi^{-1}(N)$, and it is enough to take  $p$ as the
projection with range $M$ and kernel $\pi^{-1}(N)$.

Since $\ker p$ and $\ker q$ are universally separably injective
spaces, we can take operators $u:X\to \ker q$ and $v:\ell_\infty\to
\ker p$ such that $v= t$ on $Y_0$ and $u = t^{-1}$ on $Y$.

Let $w:\ell_\infty \to \ran p$ be an operator satisfying
$\|w(x)\|\geq\|x\|$ for all $x\in\ell_\infty$. We will show that the operator
$$
T = v + w(\mathbf{1}_{\ell_\infty}-uv): \ell_\infty \To X
$$
is an isomorphism (into). This suffices to end the proof since $\ran T$
is isomorphic to $\ell_\infty$ and both $T$ and $v$ agree with $t$ on $Y_0$, so
 $Y\subset \ran {T} \subset X$.

Since $\ran v\subset \ker p$ and $\ran w\subset \ran p$, there exists
$C>0$ such that
$$
\|Tx\| \geq C \max\{ \|v(x)\|, \|w(\mathbf{1}_{\ell_\infty}-uv)x\|\}\quad\quad(x\in \ell_\infty).
$$
Now, if $\|vx\|< (2\|u\|)^{-1}\|x\|$, then $\|uvx\| < {1\over 2}\|x\|$;
hence
$$
\|w(\mathbf{1}_{\ell_\infty}-uv)x\|\geq \|(\mathbf{1}_{\ell_\infty}-uv)x\| > \frac{1}{2}\|x\|.
$$
Thus $\|Tx\| \geq C (2\|u\|)^{-1}\|x\|$ for every $x\in X$.
\end{proof}

We can now complete the proof of Proposition~\ref{quotsi}(3) and
show that the class of universally separably injective spaces has
the 3-space property.
\begin{prop}\label{3spu} The class of universally separably injective spaces has
the 3-space property.
\end{prop}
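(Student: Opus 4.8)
The goal is to prove that universal separable injectivity is a $3$-space property: if $0\to F\to E\to G\to0$ is exact with $F$ and $G$ universally separably injective, then so is $E$. By Theorem~\ref{Thetastar=Sinfty} this is equivalent to showing that $E$ is $\ell_\infty$-upper-saturated, i.e.\ every separable subspace $S\subset E$ sits inside a copy of $\ell_\infty$ contained in $E$. So the plan is to fix a separable $S\subset E$, write $\pi\colon E\to G$ for the quotient map, and build the desired copy of $\ell_\infty$ by combining a copy coming from $G$ (pulled back through $\pi$) with a copy coming from $F$.

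\textbf{Key steps.} First, $\pi(S)$ is a separable subspace of $G$, so by $\ell_\infty$-upper-saturation of $G$ there is a subspace $L\subset G$ isomorphic to $\ell_\infty$ with $\pi(S)\subset L$. Since $\ell_\infty$ is injective, the sequence $0\to F\to \pi^{-1}(L)\to L\to 0$ splits; fix a bounded linear selection $\sigma\colon L\to\pi^{-1}(L)\subset E$ for $\pi$, and set $M=\sigma(L)$, a copy of $\ell_\infty$ inside $E$ with $\pi|_M$ an isomorphism onto $L$. Now $E=M\oplus\pi^{-1}(N)$ where $G=L\oplus N$ (the projection onto $M$ along $\pi^{-1}(N)$ being $\sigma\pi$ composed with the projection $G\to L$), so in particular $F\subset\pi^{-1}(N)$ and we have a projection $p\colon E\to\pi^{-1}(N)$ with kernel $M$. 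Next, consider $S'=(\mathbf 1_E-\sigma\pi)(S)$: this is a separable subspace of $\pi^{-1}(N)$ that actually lies in $F$, because $\pi(\mathbf 1_E-\sigma\pi)=\pi-\pi$ (using $\pi\sigma=\mathrm{id}_L$ on the relevant piece, after composing with the projection $G\to L$) — so $\pi$ kills it. Apply $\ell_\infty$-upper-saturation of $F$: there is a copy $M_1\cong\ell_\infty$ of $\ell_\infty$ inside $F$ containing $S'$.

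\textbf{Assembling the copy of $\ell_\infty$.} Now I have two subspaces of $E$: $M\cong\ell_\infty$ with $M\cap F=0$ (since $\pi|_M$ is injective) and $S\subset M\oplus S'\subset M\oplus M_1$; and $M_1\cong\ell_\infty$ inside $F$. The natural candidate is to show $M\oplus M_1$ (direct sum inside $E$, which is legitimate since $M\cap F=0$ and $M_1\subset F$) is itself a copy of $\ell_\infty$ and contains $S$. The containment $S\subset M\oplus M_1$ holds because each $s\in S$ decomposes as $\sigma\pi(s)+(\mathbf 1_E-\sigma\pi)(s)\in M\oplus S'$. That $M\oplus M_1$ is isomorphic to $\ell_\infty$ follows from $\ell_\infty\cong\ell_\infty\oplus\ell_\infty$, provided the sum is \emph{topologically} direct — this is where a projection is needed: the map $p$ above projects $E$ onto $\pi^{-1}(N)$ with kernel $M$, so $M$ is complemented in $M\oplus M_1$, and one checks $p(M\oplus M_1)=M_1$, giving $M\oplus M_1\cong M\oplus M_1$ as an internal topological direct sum.

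\textbf{Main obstacle.} The delicate point is ensuring the sum $M\oplus M_1$ is topologically complemented inside $E$ (or at least inside itself as a closed subspace), not merely an algebraic direct sum; this is exactly what the projection $p=\mathbf 1_E-\sigma\pi$ (suitably precomposed with the coordinate projection $G\to L$) delivers, and it is the reason we needed to split the pulled-back sequence over $L\cong\ell_\infty$ at the outset rather than over a general separable space. A secondary technical care is verifying that $M\oplus M_1$ is closed in $E$, which again follows once we exhibit the bounded projection onto one summand along the other. Everything else is the bookkeeping of decomposing $S$ through $\sigma\pi$ and invoking Theorem~\ref{Thetastar=Sinfty} in both directions.
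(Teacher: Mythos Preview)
Your overall strategy (reduce via Theorem~\ref{Thetastar=Sinfty} to $\ell_\infty$-upper-saturation, then assemble an $\ell_\infty$-copy from the quotient together with one from the kernel) is the same as the paper's, but there is a genuine gap at the decisive step. You write ``since $\ell_\infty$ is injective, the sequence $0\to F\to \pi^{-1}(L)\to L\to 0$ splits''. Injectivity of $L\cong\ell_\infty$ concerns extensions of operators \emph{into} $L$, not liftings through a quotient map \emph{onto} $L$; it does not by itself produce a section $\sigma\colon L\to E$ of $\pi$. What would be needed is projectivity of $\ell_\infty$, and $\ell_\infty$ is not projective (only $\ell_1(\Gamma)$-spaces are). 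The splitting you need amounts to $\Ext(\ell_\infty,F)=0$ for $F$ universally separably injective; this is true (Proposition~\ref{elluniv}), but the paper proves that proposition \emph{using} the present 3-space result (via Proposition~\ref{quotsi}(3)), so invoking it here would be circular.

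The paper avoids this trap by reversing the order of the two enlargements. It first isolates separable $S_0\subset Y$ and $S_1\supset S$ with $S_1/S_0=[q(S)]$, and uses $\ell_\infty$-upper-saturation of the kernel $Y$ to obtain $Y_\infty\cong\ell_\infty$ inside $Y$ containing $S_0$. Injectivity of $\ell_\infty$ is then applied \emph{correctly}: since $Y_\infty$ sits as a \emph{subspace}, it is complemented wherever it lies, so $Y_\infty+S_1\cong Y_\infty\oplus[q(S)]$ inside $X$, and $S$ is contained in it. Only afterwards is the quotient copy $Z_\infty\supset[q(S)]$ brought in to replace the separable second summand. Your ordering instead forces you to split an exact sequence whose quotient is the nonseparable space $L\cong\ell_\infty$ over a kernel $F$ that is merely universally separably injective, and there is no non-circular tool available for that at this stage of the development. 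Once the section $\sigma$ is (illegitimately) granted, the rest of your argument---the decomposition $s=\sigma\pi(s)+(s-\sigma\pi(s))$, the containment $S\subset M\oplus M_1$, and the verification that $M\oplus M_1$ is a closed topological direct sum isomorphic to $\ell_\infty$---is correct.
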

\begin{proof} By  Theorem~\ref{Thetastar=Sinfty} one has to show
that being $\ell_\infty$-upper-saturated is a 3-space property.

Let $0\To Y \To X \stackrel{q}\To Z \To 0$ be an exact sequence in
which both $Y,Z$ are $\ell_\infty$-uppersaturated, and let $S$ be
a separable subspace of $X$. It is not hard to find separable
subspaces $S_0, S_1$ of $X$ such that $S\subset S_1$ and $S_1/S_0
= [q(S)]$. Let $Y_\infty$ be a copy of $\ell_\infty$ inside $Y$
containing $S_0$. By the injectivity of $\ell_\infty$, $S$ is
contained in the subspace $Y_\infty \oplus [q(S)]$ of $X$. And
since there exists a copy $Z_\infty$ of $\ell_\infty$ containing
$[q(S)]$, $S$ is therefore contained in the subspace $Y_\infty
\oplus Z_\infty$ of $X$, which is isomorphic to $\ell_\infty$.
\end{proof}

A homological characterization of universally separably injective
spaces is also possible. We need first to  show:

\begin{prop}\label{elluniv} If $U$ is a universally separably injective
space then $\Ext(\ell_\infty,U)=0.$
\end{prop}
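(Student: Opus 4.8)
The plan is to show that every short exact sequence $0\to U\to W\to \ell_\infty\to 0$ splits, where $U$ is universally separably injective. The natural idea is to exploit the separable injectivity of $U$ together with the fact that $\ell_\infty$ is ``built from'' separable pieces — more precisely, that $\ell_\infty = c_0(\Gamma)$ for no $\Gamma$, but $\ell_\infty$ does contain $c_0$ with separable quotient behaviour is false; instead I would use the structure of $\ell_\infty$ as an $\mathscr{L}_\infty$-space and Sobczyk-type arguments. Concretely, first I would form the pull-back/push-out machinery of Section~2: given the extension $0\to U\stackrel{\imath}\to W\stackrel{\pi}\to \ell_\infty\to 0$, by Lemma~\ref{crit-po} it suffices to lift the identity ${\bf 1}_{\ell_\infty}$ along $\pi$, i.e. find $L:\ell_\infty\to W$ with $\pi L={\bf 1}_{\ell_\infty}$.

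The key step is to reduce the lifting problem to a separable one. I would look at the restriction of the sequence to $c_0\subset \ell_\infty$: since $U$ is separably injective and $c_0$ is separable, the pulled-back sequence $0\to U\to \PB\to c_0\to 0$ splits (this is characterization (e) in Proposition~\ref{sepinj-char}, $\Ext(c_0,U)=0$). So there is a lifting $L_0:c_0\to W$ of the inclusion $c_0\hookrightarrow\ell_\infty$. The hard part will be to extend $L_0$ (or rather to produce a genuine global lifting) from $c_0$ to all of $\ell_\infty$: a naive extension need not land in the right fibre of $\pi$. Here is where universal separable injectivity must be used in an essential way, not just separable injectivity — otherwise one would be proving $\Ext(\ell_\infty,E)=0$ for all separably injective $E$, which is false (e.g. $E=c_0$).

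The route I would actually take is to invoke Sobczyk's theorem in the following disguised form: the quotient map $\pi:W\to\ell_\infty$, restricted to a suitable separable-ish sublattice, together with the fact that $\ell_\infty/c_0$ is universally separably injective (shown in Section~4) and that $U$ is $\ell_\infty$-upper-saturated (Theorem~\ref{Thetastar=Sinfty}), should let me find inside $W$ a copy of $\ell_\infty$ mapping isomorphically under $\pi$. More precisely: since $U$ is $\ell_\infty$-upper-saturated and $W\supset U$, apply Proposition~\ref{(V)}(b) to the quotient map $q:W\to W/U=\ell_\infty$ — it cannot be weakly compact (its range is $\ell_\infty$, non-reflexive), so there is a subspace $M\subset W$ with $q|_M$ an isomorphism onto a copy of $\ell_\infty$ inside $\ell_\infty$. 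But $\ell_\infty$, being injective, is complemented in $\ell_\infty$ via a projection onto $q(M)$; pulling back, one gets $W=M\oplus q^{-1}(\ker)$, and then a standard argument produces the splitting of the original sequence (the complement $q^{-1}(N)$ of $M$ contains $U$ with separable-type quotient, so $U$ is complemented there by separable injectivity, yielding a projection $W\to U$). The main obstacle I anticipate is making the last bookkeeping step precise: one must check that $q^{-1}(N)/U\cong N$ sits appropriately so that separable injectivity of $U$ (via characterization (d) of Proposition~\ref{sepinj-char}, which needs $q^{-1}(N)/U$ separable — it is \emph{not} separable, it is a copy of a subspace of $\ell_\infty$!). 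So in fact one needs $U$ to be complemented in $q^{-1}(N)$ where $q^{-1}(N)/U\cong N\subset\ell_\infty$ — and this again requires universal separable injectivity or the upper-saturation property applied once more, closing the induction. I expect the clean writeup to iterate the Proposition~\ref{(V)}(b) dichotomy exactly once and then finish with the push-out splitting criterion Lemma~\ref{crit-po}.
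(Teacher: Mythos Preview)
Your proposal contains the right ingredients but has a genuine gap at the point you yourself flag. Applying Rosenthal's property $(V)$ to $q:W\to\ell_\infty$ (legitimate, since $W$ is universally separably injective by the 3-space property, Proposition~\ref{3spu}) does produce a subspace $M\subset W$ with $M\cong\ell_\infty$ and $q|_M$ an isomorphism. Complementing $q(M)$ in $\ell_\infty$ gives $\ell_\infty=q(M)\oplus N$, and then $W=M\oplus q^{-1}(N)$. But now you face the residual sequence
\[
0\To U\To q^{-1}(N)\To N\To 0,
\]
and $N$, being an infinite-dimensional complemented subspace of $\ell_\infty$, is isomorphic to $\ell_\infty$. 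So the residual problem is \emph{isomorphic to the original one}: same $U$, quotient still $\ell_\infty$. Your claim that ``iterating once'' closes the argument is unjustified --- there is no decreasing invariant, and you are in an infinite regress. This is not a bookkeeping issue; it is the heart of the difficulty.

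The paper's proof repairs exactly this. Rather than applying Rosenthal's $(V)$ directly to $q$, it first amplifies: let $\Gamma$ be the set of \emph{all} $2$-isomorphic copies $E$ of $\ell_\infty$ inside $\ell_\infty$, take the product sequence $0\to\ell_\infty(\Gamma,U)\to\ell_\infty(\Gamma,X)\to\ell_\infty(\Gamma,\ell_\infty)\to 0$, and pull back along the diagonal-type embedding $J:\ell_\infty\to\ell_\infty(\Gamma,\ell_\infty)$, $J(x)(E)=u_Ep_E(x)$. The point of this construction is that if the resulting quotient map is an isomorphism on \emph{some} copy $E\in\Gamma$, then projecting onto the $E$-th coordinate and pushing out recovers precisely the original sequence, forcing it to split. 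So assuming the original sequence nontrivial, the pull-back quotient map is an isomorphism on \emph{no} copy of $\ell_\infty$ --- contradicting Rosenthal's $(V)$ for the pull-back space, which is universally separably injective by the 3-space property. The product over $\Gamma$ is what converts your ``there exists one good copy'' into ``every copy is good enough'', eliminating the regress.
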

\begin{proof} James's well known distortion theorem for $\ell_1$ (resp. $c_0$)
asserts that a Banach space containing a copy of $\ell_1$ (resp.
$c_0$) also contains an almost isometric copy of $\ell_1$ (resp.
$c_0$). Not so well known is Partington's distortion theorem for
$\ell_\infty$ \cite{part}: a Banach space containing $\ell_\infty$
contains an almost isometric copy of $\ell_\infty$ (see also
Dowling \cite{dowl}). This last copy will therefore be, say,
$2$-complemented.

Let $\Gamma$ denote the set of all the $2$-isomorphic copies of
$\ell_\infty$ inside $\ell_\infty$. For each $E \in\Gamma$ let
$\imath_E: E \to \ell_\infty$ be the canonical embedding, $p_E$ a
projection onto $E$ of norm at most 2 and $u_E : E \to \ell_\infty$ an
isomorphism. Assume that a nontrivial exact sequence
$$
 0 \To U\To X \To\ell_\infty \To 0
$$
exists.
We consider, for each $E\in \Gamma$, a copy of the preceding sequence,
and form the product of all these copies
$0 \To \ell_\infty(\Gamma, U)\To \ell_\infty(\Gamma, X)\To \ell_\infty(\Gamma, \ell_\infty)\To 0$.
Let us consider the embedding $J: \ell_\infty \to
\ell_\infty(\Gamma, \ell_\infty)$ defined as $J(x)(E) =
u_E p_E(x)$ and then form the pull-back sequence
$$\begin{CD}
0 @>>> \ell_\infty(\Gamma, U)@>>> \ell_\infty(\Gamma, X)
@>>> \ell_\infty(\Gamma, \ell_\infty) @>>> 0\\
&&@| @AAA @AAJA \\
0 @>>> \ell_\infty(\Gamma, U)@>>> \PB @>q>> \ell_\infty @>>> 0
\end{CD}$$
Let us show that $q$ cannot be an isomorphism on a copy of
$\ell_\infty$. Otherwise, it would be an isomorphism on some $E
\in \Gamma$ and thus the new pull-back sequence
$$\begin{CD}
0 @>>> \ell_\infty(\Gamma, U)@>>> \PB @>q>> \ell_\infty @>>> 0\\
&&@| @AAA @AA{\imath_E}A \\
0 @>>> \ell_\infty(\Gamma, U)@>>> \PB_E @>>> E @>>> 0
\end{CD}$$
would split. And therefore the same would be true making push-out
with the canonical projection $\pi_E: \ell_\infty(\Gamma, U) \to U$
onto the $E$-th copy of $U$:
$$\begin{CD}
0 @>>> \ell_\infty(\Gamma, U)@>>> \PB_E @>>> E @>>>
0\\
&&@V{\pi_E}VV @VVV @|\\
0 @>>> U @>>> \PO_E @>>> E @>>> 0
\end{CD}$$But it is not hard to see that new pull-back with
$u_E^{-1}$
$$\begin{CD}
0 @>>> U @>>> \PO_E @>>> E @>>> 0\\
&&@| @AAA @AA{u_E^{-1}}A \\
0 @>>> U @>>> X @>>> \ell_\infty @>>> 0
\end{CD}$$produces exactly the starting sequence
which, by assumption, was nontrivial.

However, the space $\PB$ should be universally separably injective
by Proposition \ref{quotsi}(3), hence it must have Rosenthal's
property $(V)$, by Proposition \ref{(V)}(b). This contradiction
shows that the starting nontrivial  sequence cannot exist.
\end{proof}

We are thus ready to prove:

\begin{teor}\label{carauniv} A Banach space $U$ is universally separably injective
if and only if for every separable space $S$ one has
$\Ext(\ell_\infty/S, U)=0$.
\end{teor}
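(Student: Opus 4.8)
The plan is to prove both implications, exploiting the homological machinery built in Section~2 together with Theorem~\ref{Thetastar=Sinfty} and Proposition~\ref{elluniv}. For the ``only if'' direction, suppose $U$ is universally separably injective and let $S$ be a separable space; we must show every exact sequence $0\to U\to X\to \ell_\infty/S\to 0$ splits. The natural idea is to pull back along the quotient map $\rho:\ell_\infty\to \ell_\infty/S$: form the pull-back of $(X\to \ell_\infty/S,\ \rho)$ to obtain a commutative diagram with rows $0\to U\to X\to \ell_\infty/S\to 0$ and $0\to U\to \PB\to \ell_\infty\to 0$. By Proposition~\ref{elluniv}, $\Ext(\ell_\infty,U)=0$, so the lower row splits; one then has to transfer this splitting back to the upper row. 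This is exactly the kind of diagram chase governed by the long exact sequence in homology: the bottom row being trivial, together with the fact that the connecting map factors appropriately, forces the top row to be trivial provided the original extension is, in a suitable sense, ``induced'' by the bottom one. Concretely, since $S\to\ell_\infty\to\ell_\infty/S$ is exact, applying $\Hom(-,U)$ and $\Ext(-,U)$ gives the segment $\Lc(S,U)\to \Ext(\ell_\infty/S,U)\to \Ext(\ell_\infty,U)=0$, so $\Ext(\ell_\infty/S,U)$ is a quotient of $\Lc(S,U)$; every class in it is the image under the connecting map $\delta$ of some operator $t:S\to U$. But $S$ is separable and $U$ is (universally) separably injective, so $t$ extends to $\ell_\infty$, which means $\delta(t)=0$. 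Hence $\Ext(\ell_\infty/S,U)=0$.

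For the ``if'' direction, assume $\Ext(\ell_\infty/S,U)=0$ for every separable $S$; I want to conclude that $U$ is universally separably injective, and by Theorem~\ref{Thetastar=Sinfty} it suffices (in the infinite-dimensional case) to show $U$ is $\ell_\infty$-upper-saturated, i.e.\ every separable subspace sits inside a copy of $\ell_\infty$. Given a separable subspace $Y\subset U$, embed $Y$ isometrically into $\ell_\infty$ and form the push-out of the inclusions $Y\hookrightarrow U$ and $Y\hookrightarrow \ell_\infty$, producing a commutative push-out diagram
\[
\begin{CD}
0 @>>> Y @>>> \ell_\infty @>>> \ell_\infty/Y @>>> 0\\
@. @VVV @VVV @|\\
0 @>>> U @>>> \PO @>>> \ell_\infty/Y @>>> 0.
\end{CD}
\]
Since $Y$ is separable, $\ell_\infty/Y$ is a quotient of $\ell_\infty$ by a separable subspace, so by hypothesis the lower row splits: there is a projection $P:\PO\to U$. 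By Lemma~\ref{isom}(a) the map $\ell_\infty\to\PO$ is an isomorphic embedding, and its composition with the quotient onto $\PO/U\cong \ell_\infty/Y$ is the original quotient map $\ell_\infty\to\ell_\infty/Y$; this forces $U$ to be complemented inside $\PO$ \emph{compatibly} with $\ell_\infty$ sitting in $\PO$. I should check that the image of $\ell_\infty$ in $\PO$, which is isomorphic to $\ell_\infty$, contains (the image of) $Y$ — indeed by construction of the push-out the composite $Y\to U\to\PO$ and $Y\to\ell_\infty\to\PO$ coincide up to the identification $\beta'\alpha=\alpha'\beta$. Taking this copy of $\ell_\infty$ inside $\PO$ and applying $P$ back to $U$ requires a little care, because $P$ restricted to that copy need not be injective; the cleaner route is to argue that since the lower sequence splits, $\PO\cong U\oplus(\ell_\infty/Y)$, and under this identification the embedded $\ell_\infty$ together with $Y\subset U\subset\PO$ gives $Y$ inside a copy of $\ell_\infty$ in $U$ — this is essentially the same maneuver as in the proof of Theorem~\ref{Thetastar=Sinfty}, where an explicit isomorphism onto a copy of $\ell_\infty$ through $Y$ is manufactured from the relevant projections and liftings.

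The main obstacle I anticipate is the ``if'' direction: splitting of the push-out sequence gives a projection $\PO\to U$, but producing from it an isomorphic copy of $\ell_\infty$ \emph{inside $U$} that actually contains $Y$ is not a one-line deduction — one genuinely has to combine the splitting with the embedding $\ell_\infty\hookrightarrow\PO$ and massage the two to land inside $U$, and the natural candidate map can fail to be injective, exactly the difficulty already met and resolved in Theorem~\ref{Thetastar=Sinfty}. So I expect the proof to invoke (or re-run) the construction there: with $q:\PO\to U$ the splitting projection, $\jmath:\ell_\infty\to\PO$ the embedding, and $w$ a suitable operator bounded below, one forms something like $q\jmath + (\text{correction term})$ and checks it is an isomorphism into $U$ agreeing with the inclusion on $Y$. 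The ``only if'' direction, by contrast, should be a short and formal consequence of Proposition~\ref{elluniv} plus the long exact homology sequence, and the finite-dimensional case is trivial since then $\ell_\infty/S$ makes the statement vacuous or immediate. I would also remark that this theorem, combined with $\ell_\infty/c_0$ being universally separably injective, yields $\Ext(\ell_\infty/c_0,\ell_\infty/c_0)=0$, the promised application.
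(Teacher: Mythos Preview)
Your ``only if'' direction matches the paper's proof exactly: the long exact sequence applied to $0\to S\to\ell_\infty\to\ell_\infty/S\to 0$, combined with $\Ext(\ell_\infty,U)=0$ from Proposition~\ref{elluniv}, shows every class in $\Ext(\ell_\infty/S,U)$ is a push-out along some $t:S\to U$; since $U$ is universally separably injective, $t$ extends to $\ell_\infty$ and the class vanishes.

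For the ``if'' direction you take an unnecessary detour and, as you yourself note, get stuck. You set up the correct push-out diagram but then try to extract a copy of $\ell_\infty$ inside $U$ containing $Y$, aiming at $\ell_\infty$-upper-saturation. This is harder than what is needed, and re-running the construction of Theorem~\ref{Thetastar=Sinfty} would require ingredients (Rosenthal's property $(V)$, the auxiliary projections $p,q$) that are not obviously available from the bare hypothesis $\Ext(\ell_\infty/S,U)=0$. The paper's route is much shorter and avoids this entirely: take an \emph{arbitrary} operator $t:S\to U$ with $S$ separable (not just an inclusion of a subspace), form the same push-out diagram, and observe that the lower row $0\to U\to\PO\to\ell_\infty/S\to 0$ splits by hypothesis. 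Now apply the splitting criterion for push-out sequences (Lemma~\ref{crit-po}): splitting of the lower row is \emph{equivalent} to $t$ extending to an operator $\ell_\infty\to U$. Thus every operator from a separable space into $U$ extends to the ambient $\ell_\infty$; since $\ell_\infty$ is injective, this immediately gives extension to any superspace, and $U$ is universally separably injective. There is no need to produce an embedded copy of $\ell_\infty$ in $U$, and Theorem~\ref{Thetastar=Sinfty} is not used at all in this direction.
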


\begin{proof} Let $S$ be separable and let $U$ be universally separably
injective. Applying $\mathfrak L(-,U)$ to the sequence $0\to S \to
\ell_\infty \to \ell_\infty/S \to 0$ one gets the exact sequence
$$\dots\To \mathfrak L(\ell_\infty, U) \To  \mathfrak  L(S, U)\To \Ext(\ell_\infty/S, U)\To \Ext(\ell_\infty,
U)$$ Since $\Ext(\ell_\infty, U)=0$, one obtains that every exact
sequence $0 \to U \to X \to \ell_\infty/S \to 0$ fits in a
push-out diagram
$$\begin{CD} 0 @>>> S @>>> \ell_\infty @>>>
\ell_\infty/S @>>> 0\\
&& @VVV @VVV @|\\
0 @>>> U @>>> X @>>> \ell_\infty/S @>>> 0.
\end{CD}$$ Since $U$ is universally separably injective, the lower
sequence splits.

The converse is clear: every operator $t: S \to
U$ from a separable Banach space into a space $U$ produces a
push-out diagram
$$\begin{CD} 0 @>>> S @>>> \ell_\infty @>>>
\ell_\infty/S @>>> 0\\
&& @VtVV @VVV @|\\
0 @>>> U @>>> \PO @>>> \ell_\infty/S @>>> 0.
\end{CD}$$ The lower sequence splits by the assumption $\Ext(\ell_\infty/S,
U)=0$ and so $t$ extends to $\ell_\infty$, according to the
splitting criterion for push-out sequences.
\end{proof}

Which leads to the unexpected:

\begin{cor}\label{unexpected} $\Ext( \ell_\infty/c_0, \ell_\infty/c_0)=0$; i.e., every short
exact sequence $0\to \ell_\infty/c_0\to X\to \ell_\infty/c_0\to 0$
splits.
\end{cor}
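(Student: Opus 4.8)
The plan is to read this off directly from Theorem~\ref{carauniv}. That theorem says a Banach space $U$ is universally separably injective if and only if $\Ext(\ell_\infty/S,U)=0$ for \emph{every} separable space $S$. So it suffices to exhibit $\ell_\infty/c_0$ as a universally separably injective space, and to observe that $c_0$ is one admissible choice of the separable space $S$.

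First I would recall that $\ell_\infty/c_0$ is universally separably injective. This was already noted in Subsection~4.3: $\ell_\infty$ is injective, hence universally separably injective, and $c_0$ is separably injective by Sobczyk's theorem, so Proposition~\ref{quotsi}(4) (the quotient of a universally separably injective space by a separably injective space is again universally separably injective) applies. Next I would simply note that $c_0$ is separable, so that $\ell_\infty/c_0$ is of the form $\ell_\infty/S$ with $S$ separable. Applying Theorem~\ref{carauniv} with $U=\ell_\infty/c_0$ and $S=c_0$ then yields $\Ext(\ell_\infty/c_0,\ell_\infty/c_0)=0$, which is precisely the assertion that every short exact sequence $0\to\ell_\infty/c_0\to X\to\ell_\infty/c_0\to 0$ splits.

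There is essentially no obstacle here: the content is entirely in Theorem~\ref{carauniv} (whose proof in turn rests on Proposition~\ref{elluniv}, i.e. on $\Ext(\ell_\infty,U)=0$ for universally separably injective $U$, together with the long exact sequence in $\Ext$), and the corollary is just the observation that the hypothesis ``$S$ separable'' is flexible enough to let the target space play the role of the quotient. The only point worth a sentence of emphasis is why this counts as ``unexpected'': the equation $\Ext(X,X)=0$ is satisfied by $\ell_\infty/c_0$, a space that is neither injective nor separable, providing a solution of a different nature from the classical ones.
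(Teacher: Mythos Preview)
Your proposal is correct and is exactly the intended argument: the corollary is immediate from Theorem~\ref{carauniv} once one knows that $\ell_\infty/c_0$ is universally separably injective (Proposition~\ref{quotsi}(4)) and that $c_0$ is separable. The paper does not even write out a proof, since the corollary is stated as a direct consequence of the theorem just proved.
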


This result provides a new solution for equation $\Ext(X, X)=0$.
The other three previously known types of solutions are: $c_0$ (by
Sobczyk theorem), the injective spaces (by the very definition)
and the $L_1(\mu)$-spaces (by Lindenstrauss' lifting).

Also, in rough contrast with Proposition \ref{3spu}, one has:

\begin{cor} Rosenthal's property (V) is not a 3-space
property\end{cor}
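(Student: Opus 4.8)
The plan is to produce, using the machinery just developed, an exact sequence whose end spaces both have Rosenthal's property $(V)$ but whose middle term fails it. The natural candidate is the exact sequence
$$
0\To c_0 \To \ell_\infty \To \ell_\infty/c_0 \To 0,
$$
or a closely related one. Here $c_0$ has Pe\l czy\'nski's property $(V)$ but \emph{not} Rosenthal's $(V)$ (the identity on $c_0$ is non-weakly-compact but fixes no copy of $\ell_\infty$), so this exact sequence is not immediately of the desired form. The fix is to replace $c_0$ by a quotient space that \emph{does} have Rosenthal's $(V)$: combine Corollary~\ref{unexpected} with the observation that $\ell_\infty/c_0$ itself is universally separably injective, hence has Rosenthal's property $(V)$ by Proposition~\ref{(V)}(b).

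Concretely, I would argue as follows. Write $Q=\ell_\infty/c_0$. Since $Q$ is universally separably injective (Subsection on $\ell_\infty/c_0$, via Proposition~\ref{quotsi}), it has Rosenthal's property $(V)$. Now consider any nontrivial exact sequence
$$
0\To Q \To X \To Q \To 0;
$$
by Corollary~\ref{unexpected} \emph{every} such sequence splits, so $X\cong Q\oplus Q$, which again has Rosenthal's $(V)$ --- so this particular sequence does not separate the classes. Instead I would look for a space $W$ with Rosenthal's $(V)$ admitting a \emph{nontrivial} extension by some space with Rosenthal's $(V)$ whose middle term can be shown to contain a \emph{complemented} copy of $c_0$. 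The cleanest route: take the Johnson--Lindenstrauss-type or $\ell_\infty$-sum construction. Specifically, consider the sequence obtained by pulling back along a non-injective-but-nice map; but the simplest is to exploit that $c_0$ sits as an $M$-ideal in its natural superspaces. Let me instead use the sequence $0\to c_0\to C(K)\to C(K)/c_0\to 0$ for $K=\N^*$... — no; the truly economical argument is:

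\begin{proof}
It suffices to exhibit a short exact sequence $0\to A\to B\to C\to 0$ of Banach spaces in which $A$ and $C$ have Rosenthal's property $(V)$ but $B$ does not. Take $A=C=\ell_\infty/c_0$, which has Rosenthal's property $(V)$ by Proposition~\ref{(V)}(b), since $\ell_\infty/c_0$ is universally separably injective. Consider the exact sequence
$$
0\To c_0 \To \ell_\infty \To \ell_\infty/c_0 \To 0
$$
and push it out along the quotient map $\ell_\infty\to\ell_\infty/c_0$ restricted suitably; more directly, form the pull-back of $\ell_\infty\to\ell_\infty/c_0$ with itself to obtain
$$
0\To c_0 \To \PB \To \ell_\infty/c_0 \To 0,
$$
where $\PB=\{(x,y)\in\ell_\infty\times\ell_\infty: x-y\in c_0\}$ and the quotient map sends $(x,y)$ to the class of $x$. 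Now $\PB$ contains the complemented copy of $\ell_\infty$ given by the diagonal $\{(x,x):x\in\ell_\infty\}$, on which the identity restricts; but it also contains the copy of $c_0$ given by $\{(x,0):x\in c_0\}$, and the projection $(x,y)\mapsto(x-y,0)$ shows this $c_0$ is complemented in $\PB$. An operator which is the identity on this complemented $c_0$ and zero elsewhere is non-weakly-compact but fixes no copy of $\ell_\infty$ (such a copy would have to map into $c_0$, impossible since $c_0$ contains no $\ell_\infty$). Hence $\PB$ fails Rosenthal's property $(V)$, while both end terms of the displayed sequence have it.
\end{proof}

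The main obstacle is verifying that the candidate middle space genuinely fails Rosenthal's $(V)$ while the outer spaces satisfy it — i.e., pinning down a clean exact sequence where the failure is transparent. The tension is that Corollary~\ref{unexpected} forces \emph{self-}extensions of $\ell_\infty/c_0$ to split, so the sequence witnessing the counterexample cannot be a self-extension of $\ell_\infty/c_0$; one must instead put $c_0$ (or another space lacking Rosenthal's $(V)$ but having Pe\l czy\'nski's $(V)$) in the \emph{middle}, sandwiched so that the outer terms are controlled. Checking that the relevant copy of $c_0$ inside the middle term is \emph{complemented} (so that it obstructs Rosenthal's $(V)$) is the one genuinely non-formal point; everything else is bookkeeping with the push-out/pull-back lemmas and Proposition~\ref{(V)}.
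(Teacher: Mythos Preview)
Your proof has a fatal gap: the left-hand term of the exact sequence you actually display,
$$0\To c_0 \To \PB \To \ell_\infty/c_0\To 0,$$
is $c_0$, not $\ell_\infty/c_0$, and $c_0$ does \emph{not} have Rosenthal's property $(V)$ --- precisely as you yourself observed in your opening paragraph. So this sequence cannot witness the failure of the 3-space property: one of the outer terms already lacks the property. Your announced plan to take $A=C=\ell_\infty/c_0$ is, as you also noted, blocked by Corollary~\ref{unexpected}: every such extension splits, and the middle term $(\ell_\infty/c_0)^2$ is universally separably injective, hence has Rosenthal's $(V)$. You cannot have it both ways. (There are also cosmetic slips: the pull-back of $\pi$ along $\pi$ has quotient $\ell_\infty$, not $\ell_\infty/c_0$, and your map $(x,y)\mapsto[x]$ has kernel $c_0\times c_0$; but these are minor next to the main error.)

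The paper's argument is entirely different and recycles the construction from the proof of Proposition~\ref{elluniv}. One starts from a \emph{nontrivial} exact sequence $0\to\ell_2\to E\to\ell_\infty\to 0$ (such sequences exist; see \cite{cabecastuni}) and repeats the product--and--pull-back trick over the set $\Gamma$ of all $2$-copies of $\ell_\infty$ inside $\ell_\infty$ to produce
$$0\To \ell_\infty(\Gamma,\ell_2)\To X \stackrel{q}\To \ell_\infty\To 0$$
in which $q$ cannot be an isomorphism on any copy of $\ell_\infty$. Hence $X$ fails Rosenthal's $(V)$, while both outer terms have it: $\ell_\infty$ trivially, and $\ell_\infty(\Gamma,\ell_2)$ because it is a quotient of the injective space $\ell_\infty(\Gamma,\ell_\infty)=\ell_\infty(\Gamma\times\N)$ and Rosenthal's $(V)$ passes to quotients. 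The idea you were missing is to begin with a genuinely nontrivial twisted sum over $\ell_\infty$ and then amplify it so that the kernel becomes an $\ell_\infty$-product (hence inherits Rosenthal's $(V)$ for structural reasons) while the quotient map is rigged, exactly as in Proposition~\ref{elluniv}, to fix no copy of $\ell_\infty$.
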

\begin{proof} With the same construction as above, start with a
nontrivial exact sequence $0 \to \ell_2 \to E \to
\ell_\infty \to 0$ (see \cite[Section 4.2]{cabecastuni}) and
construct an exact sequence
$$0 \To \ell_\infty(\Gamma, \ell_2) \To X \stackrel{q}\To \ell_\infty
\To 0,$$
 where $q$ cannot be an isomorphism on a copy of
$\ell_\infty$. So $X$ has not Rosenthal's property $(V)$. The
space $\ell_\infty(\Gamma, \ell_2)$ has Rosenthal's property $(V)$
as a quotient of $ \ell_\infty(\Gamma, \ell_\infty)=\ell_\infty
(\N\times \Gamma)$, since the property obviously passes to
quotients.
\end{proof}

It is not however true that $\Ext(U,V)=0$ for all universally
separably injective spaces $U$ and $V$ as any exact sequence $0\to
U\to \ell_\infty(\Gamma)\to \ell_\infty(\Gamma)/U \to 0$ in which
$U$ is a universally separably injective non-injective space
shows.

\section{1-separably injective spaces}\label{special}

Our first  result here establishes a major difference between
$1$-separably injective and general separably injective spaces:
$1$-separably injective spaces must be Grothendieck (hence they
cannot be separable or WCG) while a $2$-separably injective space,
such as $c_0$, can be even separable. The following lemma due to
Lindenstrauss\ \cite[p. 221, proof of (i) $\Rightarrow$
(v)]{lindpams} provides a quite useful technique.

\begin{lema}\label{aleph1sepiny}
Let $E$ be a 1-separably injective space and $Y$ a separable subspace of $X$, with $\dens X=\aleph_1$. Then
every operator $t:Y\to E$ can be extended
to an operator $T:X\to E$ with the same norm.
\end{lema}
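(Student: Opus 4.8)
The plan is to extend $t$ in $\aleph_1$ steps along a continuous chain of separable subspaces that exhausts a dense part of $X$, invoking $1$-separable injectivity at successor stages and completeness of $E$ at limit stages. Concretely, since $\dens X=\aleph_1$, fix a dense family $\{x_\alpha:\alpha<\omega_1\}$ in $X$ and set $X_\alpha=\overline{\span}\big(Y\cup\{x_\beta:\beta<\alpha\}\big)$ for $\alpha<\omega_1$. Then $X_0=Y$; each $X_\alpha$ is separable, being the closed span of the separable space $Y$ together with a countable set; the chain $(X_\alpha)$ is increasing; it is continuous, i.e. $X_\lambda=\overline{\bigcup_{\alpha<\lambda}X_\alpha}$ for every limit $\lambda<\omega_1$, because a limit ordinal is the union of the smaller ones; and $\bigcup_{\alpha<\omega_1}X_\alpha$ is dense in $X$.

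Next I would build, by transfinite recursion on $\alpha<\omega_1$, a coherent family of operators $t_\alpha:X_\alpha\to E$ with $t_0=t$, with $t_\beta|_{X_\alpha}=t_\alpha$ whenever $\alpha\le\beta$, and with $\|t_\alpha\|\le\|t\|$. At a successor stage, given $t_\alpha$, the inclusion $X_\alpha\subset X_{\alpha+1}$ with $X_{\alpha+1}$ separable lets $1$-separable injectivity of $E$ produce an extension $t_{\alpha+1}:X_{\alpha+1}\to E$ of $t_\alpha$ with $\|t_{\alpha+1}\|\le\|t_\alpha\|\le\|t\|$; coherence with all earlier $t_\beta$ follows from the inductive hypothesis. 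At a limit stage $\lambda$, the operators $t_\alpha$ with $\alpha<\lambda$ agree on overlaps, hence glue to a single operator $s$ on the dense subspace $\bigcup_{\alpha<\lambda}X_\alpha$ of $X_\lambda$ with $\|s\|\le\|t\|$; since $E$ is complete, $s$ extends uniquely and without increasing the norm to $t_\lambda:X_\lambda\to E$, and coherence is automatic.

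Finally, the whole family $(t_\alpha)_{\alpha<\omega_1}$ defines an operator of norm at most $\|t\|$ on the dense subspace $\bigcup_{\alpha<\omega_1}X_\alpha$ of $X$, which extends by continuity to an operator $T:X\to E$ with $\|T\|\le\|t\|$. Since $T$ restricts to $t$ on $Y$ one also has $\|T\|\ge\|t\|$, whence $\|T\|=\|t\|$, as required.

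The only genuinely delicate point is the limit stage: one must verify that passing from a coherent chain of norm-$\le\|t\|$ extensions to their union and then to its closure does not enlarge the norm. This is where completeness of $E$, rather than injectivity, does the work, and it is precisely the reason the argument is tied to $\dens X=\aleph_1$: for larger density the base and successor gaps would already be of uncountable density, so $1$-separable injectivity could not be applied directly and the recursion would not get off the ground.
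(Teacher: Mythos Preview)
Your argument is correct and is precisely the transfinite chain technique that the paper attributes to Lindenstrauss\ (the paper gives no proof of its own, merely citing \cite[p.~221]{lindpams}). The recursion along a continuous $\omega_1$-chain of separable subspaces, applying $1$-separable injectivity at successors and gluing/completing at limits, is exactly that method; your comments on why the norm is preserved at limit stages and why the argument is tied to $\dens X=\aleph_1$ are accurate.
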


This yields

\begin{prop}\label{1-sepinj-CH=univ} Under {\sf CH} every 1-separably injective Banach space is
universally 1-separably injective and therefore a Grothendieck space.
\end{prop}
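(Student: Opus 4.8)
The plan is to combine Lemma~\ref{aleph1sepiny} with the structure of separable subspaces under {\sf CH}. The key point is that {\sf CH} forces $\dens\ell_\infty=\frak c=\aleph_1$, so the ambient space in any universal extension problem can be taken to have density character $\aleph_1$ once we reduce to an $\ell_\infty$-valued situation.

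First I would recall that a Banach space $E$ is universally $1$-separably injective if and only if every operator $t:Y\to E$ from a separable subspace $Y$ of an arbitrary Banach space $X$ extends to $X$ with the same norm; and, as noted after Definition~\ref{CI}'s neighbourhood in Section~3, it suffices to extend $E$-valued operators with separable range to any superspace. So let $t:Y\to E$ with $Y$ separable, $Y\subset X$, and $\|t\|=1$. Since $Y$ is separable it embeds isometrically into $\ell_\infty$; by the injectivity of $\ell_\infty$ (with constant $1$) the inclusion $Y\hookrightarrow\ell_\infty$ extends to a norm-one map $X\to\ell_\infty$, and in fact we may first extend the embedding $Y\hookrightarrow\ell_\infty$ to an isometric embedding of a suitable superspace; more cleanly, it is enough to treat the universal extension problem for the particular superspace $\ell_\infty\supset Y$, because any extension to $X$ factors through an extension to $\ell_\infty$ composed with the norm-one map $X\to\ell_\infty$ that restricts to the identity on $Y$. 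Thus the whole problem reduces to: extend $t:Y\to E$ to $\ell_\infty$ with $\|T\|=1$.

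Next I would invoke {\sf CH}: $\dens\ell_\infty=2^{\aleph_0}=\aleph_1$. Therefore Lemma~\ref{aleph1sepiny} applies verbatim with $X=\ell_\infty$ (a space of density character $\aleph_1$) and the separable subspace $Y$, yielding an extension $T:\ell_\infty\to E$ with $\|T\|=\|t\|$. Composing with the norm-one retraction-type map $X\to\ell_\infty$ gives the desired norm-preserving extension to $X$. Hence $E$ is universally $1$-separably injective. The Grothendieck conclusion is then immediate from Proposition~\ref{(V)}(b), since universally separably injective spaces are Grothendieck.

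The main obstacle — really the only delicate point — is the reduction step: checking that extending $E$-valued operators to $\ell_\infty$ (a fixed superspace of controlled density) is enough to obtain universal $1$-separable injectivity for arbitrary superspaces, \emph{with no loss in norm}. This hinges on the $1$-injectivity of $\ell_\infty$ to get a norm-one map $X\to\ell_\infty$ extending the chosen isometric embedding $Y\hookrightarrow\ell_\infty$, so that the composition $X\to\ell_\infty\xrightarrow{T}E$ has norm exactly $\|t\|$ and agrees with $t$ on $Y$. Everything else is Lemma~\ref{aleph1sepiny} plus the cardinal arithmetic of {\sf CH}.
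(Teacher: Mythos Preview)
Your proposal is correct and follows essentially the same route as the paper: reduce the universal extension problem to the superspace $\ell_\infty$ via its $1$-injectivity, then invoke Lemma~\ref{aleph1sepiny} using that $\dens\ell_\infty=\aleph_1$ under {\sf CH}, and finish with Proposition~\ref{(V)}(b). The only cosmetic difference is that the paper embeds the separable \emph{range} $[t(Y)]$ into $\ell_\infty$ and extends the inclusion $[t(Y)]\hookrightarrow E$, whereas you embed the separable \emph{domain} $Y$ into $\ell_\infty$ and extend $t$ itself; both are equivalent executions of the same idea.
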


\begin{proof} Let $E$ be 1-separably injective, $X$ an arbitrary Banach space and $t:Y\to E$ an operator, where $Y$ is a separable subspace of $X$. Let $[t(Y)]$ be the closure of the image of $t$. This is a separable subspace of $E$ and so there is an isometric embedding $u:[t(Y)]\to \ell_\infty$. As $\ell_\infty$ is 1-injective there is an operator $T:X\to\ell_\infty$ whose restriction to $Y$ agrees with $ut$. Thus it suffices to extend the inclusion of $[t(Y)]$ into $E$ to $\ell_\infty$. But, under {\sf CH}, the density character
of $\ell_\infty$ is $\aleph_1$ and the preceding  Lemma applies.
The `therefore' part is now a consequence of Proposition~\ref{(V)}(b).
\end{proof}

The ``therefore'' part survives in {\sf ZFC}:

\begin{teor}\label{1vsG}
Every 1-separably injective space is a Grothendieck and a
Lindenstrauss\ space.
\end{teor}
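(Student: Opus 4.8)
The plan is to prove the two assertions in turn; the statement about Lindenstrauss spaces is almost a by-product of what has already been proved, while the Grothendieck property is the point where a genuinely new argument is needed.

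For the Lindenstrauss part, recall from the proof of Proposition~\ref{(V)} that the bidual of a $\lambda$-separably injective space is $\lambda$-injective; hence $E^{**}$ is $1$-injective and therefore, as recalled in the Introduction, isometric to a $C(K)$-space with $K$ extremally disconnected. In particular $E^{**}$ is an $\mathcal L_{\infty,1}$-space, and then the principle of local reflexivity makes $E$ an $\mathcal L_{\infty,1+}$-space: every finite-dimensional $F\subset E$ is carried, by an operator of distortion arbitrarily close to $1$ fixing $F$, into a finite-dimensional subspace of $E^{**}$ whose Banach--Mazur distance to the appropriate $\ell_\infty^n$ is arbitrarily close to $1$. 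Thus $E$ is a Lindenstrauss space. (Equivalently, $E^{*}$ is an $L$-summand in $E^{***}=C(K)^{*}$, which is isometric to an $L_1(\mu)$, so $E^{*}$ is isometric to an $L_1(\nu)$.)

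For the Grothendieck property the plan is to argue by contradiction, replacing the {\sf CH}-based reasoning of Proposition~\ref{1-sepinj-CH=univ} by a {\sf ZFC} one. Being separably injective, $E$ has Pe\l czy\'nski's property $(V)$ by Proposition~\ref{(V)}(a), and a space with property $(V)$ is a Grothendieck space if and only if it has no complemented subspace isomorphic to $c_0$; so assume that $\theta\colon c_0\to Y$ is an isomorphism onto a subspace $Y\subset E$ admitting a bounded projection $P\colon E\to Y$. The idea is to transplant, using Lemma~\ref{aleph1sepiny}, the uncomplemented copy of $c_0$ sitting inside a Johnson--Lindenstrauss space into $E$, and then project it back to obtain a contradiction. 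Let $C(\Delta_{\mathcal M})$ be the Johnson--Lindenstrauss space built from an almost disjoint family $\mathcal M$ of size $\aleph_1$; then $\dens C(\Delta_{\mathcal M})=\aleph_1$ and the distinguished $c_0\subset C(\Delta_{\mathcal M})$, the kernel of the nontrivial quotient map onto $c_0(\mathcal M)$, is not complemented in $C(\Delta_{\mathcal M})$. Viewing $\theta$ as an operator from this separable subspace $c_0\subset C(\Delta_{\mathcal M})$ into $E$ and applying Lemma~\ref{aleph1sepiny} (legitimate since $E$ is $1$-separably injective and $\dens C(\Delta_{\mathcal M})=\aleph_1$), we obtain an operator $T\colon C(\Delta_{\mathcal M})\to E$ extending $\theta$. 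Then $\theta^{-1}PT\colon C(\Delta_{\mathcal M})\to c_0$ restricts to the identity on the distinguished $c_0$ (since $PT$ agrees with $\theta$ there and $P$ fixes $Y$), hence is a bounded projection of $C(\Delta_{\mathcal M})$ onto it, contradicting the nontriviality of the Johnson--Lindenstrauss sequence. So $E$ has no complemented copy of $c_0$ and is a Grothendieck space.

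The decisive step --- and the only non-routine one --- is the choice of test space for Lemma~\ref{aleph1sepiny}: that lemma produces extensions only from subspaces of spaces of density $\aleph_1$, so under {\sf CH} one can afford to work with $\ell_\infty$ as in Proposition~\ref{1-sepinj-CH=univ}, but in {\sf ZFC} one needs a space of density \emph{exactly} $\aleph_1$ harbouring an uncomplemented $c_0$; the Johnson--Lindenstrauss space over an $\aleph_1$-sized almost disjoint family is precisely such an object. All the remaining ingredients --- the description of the Grothendieck property among spaces with property $(V)$, and the passage from $E^{**}$ to $E$ for Lindenstrauss spaces --- are standard.
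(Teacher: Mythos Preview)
Your proof is correct and follows essentially the same route as the paper's. For the Lindenstrauss part, both you and the paper invoke the argument in Proposition~\ref{(V)} to see that $E^{**}$ is $1$-injective and then deduce that $E$ is $\mathcal L_{\infty,1^+}$; you simply spell out the local-reflexivity (or $L$-summand) step that the paper leaves implicit. For the Grothendieck part, the paper does exactly what you do: it takes an embedding $\jmath:c_0\to E$, extends it via Lemma~\ref{aleph1sepiny} to a Johnson--Lindenstrauss space $C(\Delta_{\mathscr M})$ built from an almost-disjoint family of size $\aleph_1$, and observes that a projection of $E$ onto $\jmath(c_0)$ would pull back to a projection of $C(\Delta_{\mathscr M})$ onto its distinguished $c_0$, which does not exist. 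Your explicit formula $\theta^{-1}PT$ for the would-be projection is precisely the content of the paper's concluding sentence.
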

\begin{proof}The proof of Proposition \ref{(V)} yields that $1$-separably
injective spaces are of type $\mathcal L_{\infty,1^+}$, that is,
Lindenstrauss\ spaces. It remains to prove that a 1-separably
injective space $E$ must be Grothendieck. It suffices to show that
$c_0$ is not complemented in $E$, so let $\jmath: c_0 \To E$ be an
embedding. Consider an almost-disjoint family $\mathscr M$ of size
$\aleph_1$ formed by infinite subsets of $\N$ and construct the
associated  Johnson-Lindenstrauss twisted sum space
$$\begin{CD}
0 @>>> c_0 @>>> C(\Delta_\mathscr M) @>>> c_0(\aleph_1)@>>> 0.
\end{CD}
$$Observe that the space  $C(\Delta_{\mathscr M})$ has density
character $\aleph_1$, we have therefore a commutative diagram
$$\begin{CD}
0 @>>> c_0 @>>> C(\Delta_\mathscr M) @>>> c_0(\aleph_1)@>>> 0\\
&&@|  @VVV @VVV\\
0 @>>> c_0 @>\jmath >> E @>>> E/\jmath(c_0)@>>> 0.
\end{CD}
$$ If $c_0$ was complemented in $E$ then it would be
complemented in $C(\Delta_\mathscr M)$ as well, which is not.
\end{proof}

Proposition \ref{1-sepinj-CH=univ} leads to the question about the
necessity of the hypothesis {\sf CH}. We will prove now that it
cannot be dropped.

\subsection{A $1$-separably injective but not $1$-universally
separably injective $C(K)$}

\begin{lema}\label{positiveextension}
Let $K,L,M$ be compact spaces and let $f:K\To M$ be a continuous
map, with \mbox{$\jmath=f^{\circ}:C(M)\To C(K)$} its induced
operator, and let $\imath: C(M)\To C(L)$ be a positive norm one
operator. Suppose that $S:C(L)\To C(K)$ is an operator with
$\|S\|=1$ and $S\imath = \jmath$. Then $S$ is a positive operator.
\end{lema}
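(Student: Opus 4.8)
The plan is to exploit the rigidity of norm-one extensions of lattice homomorphisms between $C(K)$-spaces. The key observation is that $\jmath=f^{\circ}$ is a positive norm-one operator which moreover sends the constant function $\mathbf 1_M$ to $\mathbf 1_K$ (indeed $f^{\circ}\mathbf 1_M=\mathbf 1_M\circ f=\mathbf 1_K$), and $\imath$ is positive of norm one, so $\imath\mathbf 1_M$ is a positive function of norm at most one. First I would use the relation $S\imath=\jmath$ applied to $\mathbf 1_M$: writing $g=\imath\mathbf 1_M\in C(L)$, we have $0\le g\le\mathbf 1_L$ and $Sg=\mathbf 1_K$. Since $\|S\|=1$ we get $\|S(\mathbf 1_L-g)\|\le\|\mathbf 1_L-g\|\le 1$, while $S(\mathbf 1_L-g)=\mathbf 1_K-Sg+S\mathbf 1_L-\mathbf 1_K$; more usefully, $S\mathbf 1_L=\mathbf 1_K+S(\mathbf 1_L-g)$ has norm at most $1+\|\mathbf 1_L-g\|$. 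This alone is not enough, so the real input has to be a pointwise evaluation argument.

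The heart of the argument is to fix a point $k\in K$ and consider the functional $\delta_k\circ S\in C(L)^*$, which has norm at most one. Evaluating $S\imath=\jmath$ at $k$ gives $(\delta_k\circ S)\circ\imath=\delta_k\circ\jmath=\delta_{f(k)}$, i.e.\ the measure $\mu_k:=(\delta_k\circ S)\in C(L)^*$ satisfies $\imath^*\mu_k=\delta_{f(k)}$. Now I would use that $\imath$ is positive of norm one together with $\imath^*\mu_k=\delta_{f(k)}$ a positive measure to conclude $\mu_k$ itself is positive: decompose $\mu_k=\mu_k^+-\mu_k^-$; since $\imath\ge 0$ we have $\langle\imath^*\mu_k^\pm,h\rangle=\langle\mu_k^\pm,\imath h\rangle\ge 0$ for $h\ge 0$, so $\imath^*\mu_k^+$ and $\imath^*\mu_k^-$ are positive measures whose difference is $\delta_{f(k)}$; but then $\|\mu_k\|=\|\mu_k^+\|+\|\mu_k^-\|=\langle\imath^*\mu_k^+,\mathbf 1\rangle+\langle\imath^*\mu_k^-,\mathbf 1\rangle$ unless some cancellation occurs, and comparing with $\|\mu_k\|\le\|S\|=1=\|\delta_{f(k)}\|=\langle\imath^*\mu_k^+-\imath^*\mu_k^-,\mathbf 1\rangle\le\langle\imath^*\mu_k^+,\mathbf 1\rangle$ forces $\langle\imath^*\mu_k^-,\mathbf 1\rangle=0$, hence $\mu_k^-=0$ because $\imath^*\mu_k^-$ is positive and $\imath$ has dense enough range in the relevant sense. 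Actually the cleanest route: $\|\mu_k\|\le 1$ and $\langle\mu_k,g\rangle=\langle\imath^*\mu_k,\mathbf 1_M\rangle=\langle\delta_{f(k)},\mathbf 1_M\rangle=1$ where $g=\imath\mathbf 1_M$ satisfies $0\le g\le\mathbf 1_L$; a measure of norm $\le 1$ that pairs to $1$ against a function bounded by $1$ in absolute value must be positive on the set $\{g=1\}$ and must be concentrated there, hence positive. Once every $\mu_k=\delta_k\circ S$ is a positive measure, positivity of $S$ follows immediately: for $h\ge 0$ in $C(L)$ and any $k\in K$, $(Sh)(k)=\langle\mu_k,h\rangle\ge 0$.

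The main obstacle I anticipate is making the ``norm-one functional pairing to $1$ against a function $\le\mathbf 1_L$ must be positive'' step fully rigorous, since in general a contractive functional that attains value $1$ on some $g$ with $0\le g\le\mathbf 1$ need only be supported where $g=1$; one must check that on that set the functional has no negative part, which follows from $\|\mu_k\|\le 1=\mu_k(g)\le\mu_k^+(g)\le\mu_k^+(L)\le\|\mu_k\|$, forcing $\mu_k^-(g)=0$ and $\mu_k^-(L)=0$, hence $\mu_k=\mu_k^+\ge 0$. I would present exactly this chain of inequalities. The rest is bookkeeping with the Riesz representation theorem and the identities $f^{\circ}\mathbf 1_M=\mathbf 1_K$ and $S\imath=\jmath$; no deep tool beyond duality of $C(K)$-spaces is needed.
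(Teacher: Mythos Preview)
Your argument is correct and follows essentially the same route as the paper: fix $k\in K$, set $\mu_k=S^*\delta_k$, use $\imath^*\mu_k=\delta_{f(k)}$ together with $\|\mu_k\|\le 1$ and the Hahn--Jordan decomposition to force $\mu_k^-=0$. The only cosmetic difference is that the paper squeezes via $1=\|\delta_{f(k)}\|\le\|\imath^*\mu_k^+\|+\|\imath^*\mu_k^-\|\le\|\mu_k^+\|+\|\mu_k^-\|\le 1$ to conclude $\|\mu_k^-\|=\|\imath^*\mu_k^-\|=0$, whereas your ``clean route'' pairs directly against $g=\imath\mathbf 1_M$ and runs the chain $1=\mu_k(g)\le\mu_k^+(g)\le\mu_k^+(L)\le\|\mu_k\|\le 1$; both reach $\mu_k^-=0$ by the same idea.
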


\begin{proof}
Obviously $S\geq0$ if and only if $S^\ast\delta_x\geq 0$ for all
$x\in K$, where $\delta_x$ is the unit mass at $x$ and
$S^\ast:C(K)^\ast\to C(L)^\ast$ is the adjoint operator. Fix $x\in
K$. By Riesz theorem we have that $S^\ast\delta_x = \mu$ is a
measure of total variation $\|\mu\|\leq 1$. Let $\mu=\mu^+-\mu^-$
be the Hahn-Jordan decomposition of $\mu$, so that
$\|\mu\|=\|\mu^+\|+\|\mu^-\|$, with $\mu^+,\mu^-\geq 0$. We have
that $\delta_{f(x)} = \jmath^\ast\delta_x=\imath^\ast
S^\ast\delta_x=\imath^\ast\mu$, thus
$$
\delta_{f(x)}=\imath^\ast\mu^+-\imath^\ast\mu^-\quad\text{and}\quad
\|\delta_{f(x)}\|=\|\imath^\ast\mu^+\|+\|\imath^\ast\mu^-\|.
$$
Since $\imath$ is a positive operator these imply that the above
is the Hahn-Jordan decomposition of $\delta_{f(x)}$ and so
$\imath^\ast\mu^-=0$, hence $\mu^-=0$.
\end{proof}

\begin{defin}
Let $L$ be a zero-dimensional compact space. An $\aleph_2$-Lusin
family on $L$ is a family $\mathscr{F}$ of pairwise disjoint
nonempty clopen subsets of $L$ with $|\mathscr{F}|=\aleph_2$, such
that whenever $\mathscr{G}$ and $\mathscr{H}$ are subfamilies of
$\mathscr{F}$ with $|\mathscr{G}|=|\mathscr{H}|=\aleph_2$, then
$$\overline{\bigcup\{G\in\mathscr{G}\}} \cap
\overline{\bigcup\{G\in\mathscr{H}\}} \neq\varnothing.$$
\end{defin}

The following lemma shows the consistency of the existence of an
$\aleph_2$-Lusin family on $\mathbb{N}^\ast$. This is rather
folklore of set-theory, but we did not find a reference so we
state it and give a hint of the proof.

\begin{lema} Under {\sf MA} and the assumption $\mathfrak c = \aleph_2$ there exists an
$\aleph_2$-Lusin family on $\mathbb{N}^\ast$.
\end{lema}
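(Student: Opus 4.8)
The plan is to construct the family by a transfinite recursion of length $\aleph_2$, building clopen subsets of $\mathbb{N}^\ast$ one at a time while using $\mathsf{MA}$ to keep the ``Lusin'' intersection property intact. First I would fix a base of clopen sets for $\mathbb{N}^\ast$: these are exactly the sets $A^\ast = \overline{A}\cap\mathbb{N}^\ast$ for $A\subseteq\mathbb{N}$ infinite, and $A^\ast\cap B^\ast=\varnothing$ precisely when $A\cap B$ is finite, while $A^\ast\subseteq B^\ast$ precisely when $A\setminus B$ is finite. So the problem translates entirely to the combinatorics of $\mathcal P(\mathbb{N})/\mathrm{fin}$: I want an almost disjoint family $\{A_\alpha:\alpha<\aleph_2\}$ of infinite subsets of $\mathbb{N}$ such that for any two subfamilies indexed by sets $\mathscr G,\mathscr H$ of size $\aleph_2$, the ``pseudo-unions'' of $\{A_\alpha:\alpha\in\mathscr G\}$ and $\{A_\alpha:\alpha\in\mathscr H\}$ are not almost disjoint — equivalently, there is no single $C\subseteq\mathbb{N}$ almost containing all $A_\alpha$, $\alpha\in\mathscr G$, and almost disjoint from all $A_\alpha$, $\alpha\in\mathscr H$.

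The recursion step is where $\mathsf{MA}$ enters. Having chosen $\{A_\beta:\beta<\alpha\}$ for some $\alpha<\aleph_2$ — a family of size $<\mathfrak c=\aleph_2$, so of size $\le\aleph_1<\mathfrak c$ — I need to choose $A_\alpha$ infinite, almost disjoint from all earlier $A_\beta$, and sufficiently ``generic.'' For the mere almost-disjointness one could use the standard fact that an a.d. family of size $<\mathfrak c$ is never maximal; but to secure the Lusin property I would instead invoke $\mathsf{MA}$ for the natural ccc forcing (equivalently, a suitable $\sigma$-centered poset of finite approximations) that diagonalizes against all $<\mathfrak c$ many ``threats'' simultaneously. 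Concretely, a threat is a potential set $C$ that might later separate a large $\mathscr G$ from a large $\mathscr H$; the key observation is that it suffices to handle this at the level of the already-built family plus $\aleph_1$-sized ``pre-commitments,'' because any eventual $\mathscr G,\mathscr H$ of size $\aleph_2$ must, by a counting/$\Delta$-system argument, contain $\aleph_2$ indices beyond any fixed stage. So I would set up the recursion so that at stage $\alpha$ I enumerate (using $\mathfrak c=\aleph_2$ and bookkeeping) all pairs $(\mathscr G',\mathscr H')$ of countable subsets of $\alpha$ together with candidate separators $C$, and use $\mathsf{MA}_{\aleph_1}$ applied to the $\sigma$-centered poset whose conditions are finite subsets of $\mathbb{N}$ ordered by end-extension-plus-side-conditions, with dense sets forcing $A_\alpha$ to meet each $A_\beta$ only finitely, to be infinite, and — crucially — to intersect $C$ infinitely whenever $C$ almost-contains cofinally many of the $A_\beta$'s we have promised to $\mathscr G$. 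This guarantees no $C$ can separate a final $\mathscr G$ from a final $\mathscr H$.

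The main obstacle, and the place I would spend the most care, is formulating the side conditions so that the forcing is genuinely ccc (indeed $\sigma$-centered, which is automatic for posets of finite subsets of $\mathbb{N}$ provided the side conditions only constrain finitely much) \emph{and} the dense sets I need are really dense. The subtle point is that a ``threat'' $C$ separating potential $\aleph_2$-sized $\mathscr G$ and $\mathscr H$ cannot be captured by any countably many indices, so one must argue that it is enough to kill, at each stage, only the threats visible so far, and that a $\Delta$-system / pigeonhole argument at the end shows any genuine pair $(\mathscr G,\mathscr H)$ of size $\aleph_2$ was ``decided'' cofinally often along the recursion. I would phrase the final verification as: given $\mathscr G,\mathscr H\subseteq\aleph_2$ of size $\aleph_2$ and a putative separating $C$, pick $\alpha\in\mathscr H$ large enough that cofinally many $\beta\in\mathscr G$ are below $\alpha$ and already committed; then by construction $A_\alpha\cap C$ is infinite, so $A_\alpha^\ast\cap C^\ast\ne\varnothing$, contradicting that $C^\ast$ contains $\overline{\bigcup\{A_\beta^\ast:\beta\in\mathscr G\}}$ while being disjoint from $A_\alpha^\ast$. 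Since the lemma asks only for a hint of the proof, I would present this recursion schematically, citing the standard $\mathsf{MA}$ machinery for the combinatorics of $\mathcal P(\mathbb{N})/\mathrm{fin}$ rather than grinding through the density arguments in full.
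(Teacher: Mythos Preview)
Your approach is correct and is essentially the paper's: translate to almost disjoint families in $\mathcal P(\mathbb N)/\mathrm{fin}$, run a transfinite recursion of length $\aleph_2$, and at each step apply $\mathsf{MA}$ to a ccc forcing whose conditions are pairs consisting of a finite partial $\{0,1\}$-function on $\mathbb N$ together with a finite set of earlier indices as side conditions.

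The one place where the paper is noticeably cleaner is the handling of the ``threats''. Rather than book-keeping triples $(\mathscr G',\mathscr H',C)$ with $\mathscr G',\mathscr H'$ countable subfamilies, the paper simply enumerates \emph{all} infinite $B\subseteq\mathbb N$ as $\{B_\gamma:\gamma<\aleph_2\}$ (possible precisely because $\mathfrak c=\aleph_2$) and at stage $\alpha$ imposes, for every $\gamma<\alpha$, the requirement that $A_\alpha\cap B_\gamma$ be infinite \emph{unless} $B_\gamma$ is already almost-contained in a finite union of earlier $A_\delta$'s. No reference to future $\mathscr G,\mathscr H$ is needed during the construction, and the final verification becomes a one-liner: if some $C$ separated $\aleph_2$-sized $\mathscr G$ from $\aleph_2$-sized $\mathscr H$, write $C=B_\gamma$; any $\alpha\in\mathscr H$ with $\alpha>\gamma$ has $A_\alpha\cap C$ finite, so the hypothesis of the requirement must fail, i.e.\ $C\subseteq^* A_{\delta_1}\cup\cdots\cup A_{\delta_k}$ for finitely many $\delta_i$; almost-disjointness then forces $\{\beta:A_\beta\subseteq^* C\}\subseteq\{\delta_1,\ldots,\delta_k\}$, contradicting $|\mathscr G|=\aleph_2$. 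This bypasses entirely the $\Delta$-system/pigeonhole layer you sketch, and in particular avoids the delicate point in your outline of ensuring that the relevant triple is ``active'' at a stage $\alpha$ that actually lies in $\mathscr H$.
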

\begin{proof} By Stone duality, since the Boolean algebra associated to
$\mathbb{N}^\ast$ is $\wp(\mathbb{N})/\fin$, an
$\aleph_2$-Lusin family on $\mathbb{N}^\ast$ is all the same as an
almost disjoint family $\{A_\alpha\}_{\alpha<\omega_2}$ of
infinite subsets of $\mathbb{N}$ such that for every $B\subset
\mathbb{N}$ either $\{\alpha : |A_\alpha\setminus B|\text{ is
finite}\}$ or $\{\alpha : |A_\alpha\cap B|\text{ is finite}\}$ has
cardinality $<\aleph_2$. Let $\{B_\alpha : \alpha<\omega_2\}$ be
an enumeration of all infinite subsets of $\mathbb{N}$. We
construct the sets $A_\alpha$ inductively on $\alpha$. Suppose
$A_\gamma$ has been constructed for $\gamma<\alpha$. We define a
forcing notion $\mathbb{P}$ whose conditions are pairs $p=(f_p,
F_p)$ where $f_p$ is a $\{0,1\}$-valued function on a finite
subset $\dom(f_p)$ of $\mathbb{N}$ and $F_p$ is a finite subset of
$\alpha$. The order relation is that $p<q$ if $f_p$ extends $f_q$,
$F_p\supset F_q$ and $f_p$ vanishes in $A_\gamma\setminus
\dom(f_q)$ for $\gamma\in F_q$. One checks that this forcing is
ccc. Hence, by {\sf MA}, using a big enough generic filter the forcing
provides an infinite set $A_\alpha\subset \mathbb{N}$ such that,
for all $\gamma<\alpha$,
\begin{enumerate}\item $A_\alpha \cap
A_\gamma$ is finite, and \item If $B_\gamma$ is not contained in
any finite union of $A_\delta$'s, then $A_\alpha\cap B_\gamma$ is
infinite. \end{enumerate} \end{proof}

\begin{teor}\label{consistent}
It is consistent that there exists a compact space $K$ for which the Banach space $C(K)$ is
1-separably injective but not universally 1-separably injective.
\end{teor}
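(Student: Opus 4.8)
The plan is to run the whole argument inside a model of {\sf ZFC} in which Martin's Axiom holds together with $\mathfrak c=\aleph_2$; since this theory is consistent with {\sf ZFC}, what results is a consistency statement. In such a model the preceding lemma provides an $\aleph_2$-Lusin family $\mathscr F=\{F_\alpha:\alpha<\omega_2\}$ of pairwise disjoint nonempty clopen subsets of $\mathbb N^\ast$; dually, an almost disjoint family $\{A_\alpha:\alpha<\omega_2\}$ of infinite subsets of $\mathbb N$ of which no $B\subseteq\mathbb N$ almost contains $\aleph_2$ of the $A_\alpha$ while $\mathbb N\setminus B$ almost contains $\aleph_2$ of the rest.

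First I would fix the compact space $K$. It must be a compact $F$-space, for then $C(K)$ is automatically $1$-separably injective by Proposition~\ref{people-sep}, and it must carry the Lusin family; the simplest candidate is $K=\mathbb N^\ast$ itself, inside which $\{F_\alpha\}$ already lives, and more generally one may use any compact $F$-space built from $\mathbb N^\ast$ and the $F_\alpha$ (closed subspaces of $F$-spaces being again $F$-spaces). The substance of this step lies in the auxiliary data attached to $K$: a metrizable compactum $M$ with a continuous surjection $f\colon K\to M$, and a compact space $L$ with a continuous surjection $g\colon L\to M$ that ``splits'' $M$ above a distinguished closed set $P\subseteq M$ in the manner of a split-interval (double-arrow) space, arranged so that $C(L)$ has density $\aleph_2$, the positive norm-one operator $\imath=g^\circ\colon C(M)\to C(L)$ exhibits $C(M)$ as a \emph{separable} subspace of $X:=C(L)$, and the way $f$ interacts with $f^{-1}(P)$ and the family $\{F_\alpha\}$ is rigged so that $f^\circ$ admits no isometric extension across $L$.

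With this package in hand, I take $Y=C(M)\subseteq X=C(L)$ --- which is separable, $M$ being metrizable --- and $t=\jmath=f^\circ\colon C(M)\to C(K)$, a norm-one operator. Assume, towards a contradiction, that $C(K)$ is $1$-universally separably injective, so $t$ extends to some $S\colon C(L)\to C(K)$ with $\|S\|=1$ and $S\imath=\jmath$. By Lemma~\ref{positiveextension}, $S$ is positive; and since $\imath(\mathbf{1}_M)=\mathbf{1}_L$ and $\jmath(\mathbf{1}_M)=\mathbf{1}_K$, we get $S\mathbf{1}_L=\mathbf{1}_K$, so $S$ is unital. Dualizing, $x\mapsto\mu_x:=S^\ast\delta_x$ is a weak$^*$-continuous map from $K$ into the probability measures on $L$ whose push-forward along $g$ is $\delta_{f(x)}$; that is, $\mu_x$ is supported on the $g$-fibre over $f(x)$ --- a single point where $f(x)\notin P$, and the two ``split'' points of $L$ over $f(x)$ where $f(x)\in P$. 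So $S$ would amount to a globally continuous assignment, compatible with the forced point masses off $f^{-1}(P)$, of a convex combination of the two split points to each $x\in f^{-1}(P)$. The closing step is to show no such assignment exists: reading it off on the clopen pieces $F_\alpha$ and using continuity of $x\mapsto\mu_x$ at points of the common ``boundary'' $\overline{\bigcup_\alpha F_\alpha}\setminus\bigcup_\alpha F_\alpha$, one would obtain a separation of an $\aleph_2$-subfamily of $\{F_\alpha\}$ from another $\aleph_2$-subfamily by disjoint closed sets --- exactly what an $\aleph_2$-Lusin family forbids. (At the level of two, or even countably many, pieces no contradiction arises, because $\mathbb N^\ast$ is an $F$-space; the obstruction is a genuine phenomenon of size $\aleph_2$, which is why {\sf CH} cannot be the operative hypothesis --- under {\sf CH}, Proposition~\ref{1-sepinj-CH=univ} would force $C(K)$ to be $1$-universally separably injective.)

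I expect the main obstacle to be the construction in the second paragraph: one must produce concrete compact spaces $K,L,M$ and maps $f,g$ such that $K$ is genuinely a compact $F$-space while, simultaneously, the Lusin combinatorics is transported through $f$ faithfully enough that the measure-valued selection of the third paragraph is driven into the forbidden separation. Once those objects are in place, the remaining work --- positivity from Lemma~\ref{positiveextension}, weak$^*$-compactness of the probability measures, and the clash with the defining property of an $\aleph_2$-Lusin family --- is routine.
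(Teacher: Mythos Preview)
Your proposal has a genuine gap at the very first substantive choice: you take $K=\mathbb N^\ast$. But $C(\mathbb N^\ast)=\ell_\infty/c_0$ is $1$-universally separably injective in {\sf ZFC}; this is established earlier in the paper (Proposition~\ref{cor:mideal}(a), applied to the $M$-ideal $c_0$ in $\ell_\infty$, with the remark that the exponent~$2$ drops because $c_0$ is a Lindenstrauss space). So no amount of cleverness in the choice of $L$, $M$, $f$, $g$ will produce a counterexample with $K=\mathbb N^\ast$: the putative extension $S$ always exists. The sentence ``more generally one may use any compact $F$-space built from $\mathbb N^\ast$ and the $F_\alpha$'' doesn't rescue the plan, because you give no indication of what property of $K$ you would then exploit to force a contradiction --- and the contradiction you sketch (separating two $\aleph_2$-subfamilies of a Lusin family on $\mathbb N^\ast$ by closed sets) is aimed at $\mathbb N^\ast$, not at some other $K$.

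The paper's argument inverts your assessment of where the difficulty lies. The embedding $Y\subset X$ is the trivial one $c\subset\ell_\infty$, and $t\colon c\to C(K)$ just sends a convergent sequence to the obvious locally constant function on a disjoint sequence of clopens $U_n\subset K$; no split-interval or metrizable $M$ is needed. All the work is in the choice of $K$: it is the Stone space of the Cohen--Parovi\v{c}enko algebra of Dow and Hart, an $F$-space with the crucial additional property that $K\setminus U$ (where $U=\bigcup_n U_n$) \emph{does not} carry any $\aleph_2$-Lusin family. A norm-one extension $T\colon\ell_\infty\to C(K)$, being positive by Lemma~\ref{positiveextension}, is used to push the given $\aleph_2$-Lusin family on $\mathbb N^\ast$ forward into a family of clopens in $K\setminus U$; the Dow--Hart result then yields the separation of two $\aleph_2$-subfamilies in $K\setminus U$, and pulling back along the canonical map $g\colon\beta\mathbb N\to K$ contradicts the Lusin property on $\mathbb N^\ast$. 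So the Lusin family is the \emph{witness} transported into $K$, and the obstruction is a structural fact about $K$ that you do not mention.
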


\begin{proof} We will suppose that $\mathfrak c=\aleph_2$ and that there exists
an $\aleph_2$-Lusin family in $\mathbb{N}^\ast$. Under these
hypotheses, let $K$ be the Stone dual compact space of the
Cohen-Parovi\v{c}enko Boolean algebra of \cite[Theorem
5.6]{DowHart}. The definition of that Boolean algebra implies that
$K$ is an $F$-space and thus $C(K)$ is 1-separably injective by
Theorem \ref{people-sep}. We show that it is not universally
1-separably injective. The argument follows the scheme of
\cite[Theorem 5.10]{DowHart}, where they prove that $K$ does not
map onto $\beta\mathbb{N}$, but we use $\aleph_2$-Lusin families
instead of $\omega_2$-chains because they fit better in the
functional analytic context.

Let $\{U_n : n\in\mathbb{N}\}$ be a sequence of pairwise disjoint
clopen subsets of $K$, and let $U=\bigcup_{n}U_n$.
Let $c\subset \ell_\infty$ be the Banach space of convergent
sequences, and $t:c\To C(K)$ be the operator given by $t(z)(x) z_n$ if $x\in U_n$ and  $t(z)(x) = \lim z_n$ if $x\not\in U$.

If $C(K)$ were universally 1-separably injective, we should have
an extension ${T}:\ell_\infty\To C(K)$ of $t$ with
$\|{T}\|=1$. We shall derive a contradiction from the
existence of such operator.

Notice that the
conditions of Lemma~\ref{positiveextension} are applied, so
${T}$ is positive (observe that
$c=C(\mathbb{N}\cup\{\infty\})$ and $T=f^{\circ}$ where
$f:K\To\mathbb{N}\cup\{\infty\}$ is given by $f(x)=n$ if $x\in U_n$ and
$f(x) = \infty$ if $x\notin U$).

For every $A\subset \mathbb{N}$ we will denote $[A] =
\overline{A}^{\beta\mathbb{N}}\setminus\mathbb{N}$. The clopen
subsets of $\mathbb{N}^\ast$ are exactly the sets of the form
$[A]$, and we have that $[A]=[B]$ if and only if $(A\setminus
B)\cup (B\setminus A)$ is finite.

Let $\mathscr F$ be an $\aleph_2$-Lusin family in
$\mathbb{N}^\ast$. For $F=[A]\in \mathscr F$ and
$0<\varepsilon<\frac{1}{2}$, let
$$
F_\varepsilon = \{x\in K\setminus U : {T}(1_A)(x)>1-\varepsilon\}.
$$

Let us remark that $F_\varepsilon$ depends only on $F$ and not on
the choice of $A$. This is because if $[A]=[B]$, then $1_A - 1_B
\in c_0$, hence ${T}(1_A-1_B) = t(1_A-1_B)$ which vanishes out of
$U$, so ${T}(1_A)|_{K\setminus U} = {T}(1_B)|_{K\setminus U}$.

{\sc Claim 1.} If $\delta < \varepsilon$ and $F\in\mathscr F$,
then $\overline{F_{\delta}} \subset F_{\varepsilon}$.
\smallskip

{\sc Claim 2.} $F_\varepsilon\cap G_\varepsilon=\varnothing$ for
every $F\neq G$.
\smallskip

\emph{Proof of Claim 2.} Since $F\cap G=\emptyset$ we can choose
$A,B\subset\mathbb{N}$ such that $F=[A]$, $G=[B]$ and $A\cap
B=\emptyset$. If $x\in F_\varepsilon\cap G_\varepsilon$,
$\bar{T}(1_A+1_B)(x)>2-2\varepsilon>1$ which is a contradiction
because $1_A+1_B = 1_{A\cup B}$ and $\|\bar{T}(1_{A\cup B})\|\leq
\|\bar{T}\|\|1_{A\cup B}\| = 1$. \emph{End of the Proof of Claim
2.}
\smallskip

For every $F\in\mathscr F$, let $\tilde{F}$ be a clopen subset of
$K\setminus U$ such that $\overline{{F}_{0.2}}\subset
\tilde{F}\subset F_{0.3}$. By the preceding claims, this is a
disjoint family of clopen sets. It follows from Proposition 2.6
and Corollary 5.12 in \cite{DowHart} that $K\setminus U$ does not
contain any $\aleph_2$-Lusin family. Therefore we can find
$\mathscr G, \mathscr H\subset \mathscr F$ with $|\mathscr
G|=|\mathscr H|=\aleph_2$ such that
$$
\overline{\bigcup\{\tilde{G} : G\in\mathscr G\}}\cap
\overline{\bigcup\{\tilde{H} : H\in\mathscr H\}}= \varnothing.
$$

Now, for every $n\in\mathbb{N}$ choose a point $p_n\in U_n$. Let
$g:\beta\mathbb{N}\To K$ be a continuous function such that
$g(n)=p_n$.

{\sc Claim 3.} For $u\in\beta\mathbb{N}$, $A\subset \mathbb{N}$,
${T}(1_A)(g(u)) =\left\{%
\begin{array}{ll}
    1, & \text{ if }u\in [A]; \\
    0, & \text{ if }u\not\in [A].\\
\end{array}%
\right.$\\

\emph{Proof of Claim 3.} It is enough to check it for
$u=n\in\mathbb{N}$. This is a consequence of the fact that ${T}$
is positive, because if $m\in A$, $n\not\in A$, then $0\leq
t(1_m)\leq {T}(1_A)\leq t(1_{\mathbb{N}\setminus\{n\}})\leq 1$.
\emph{End of the Proof of Claim 3.}
\smallskip

The function $g$ is one-to-one because
$$
\overline{\{p_n : n\in A\}}\cap \overline{\{p_n : n\not\in A\}}=\varnothing
$$
for every $A\subset\mathbb{N}$, as the function ${T}(1_A)$ separates
these sets. On the other hand, as a consequence of Claim 3 above,
for every $F\in\mathscr F$ and every $\varepsilon$,
$g^{-1}(F_\varepsilon) = F$, and also $g^{-1}(\tilde{F}) = F$.
These facts make the families $\mathscr{H}$ and $\mathscr{G}$
above to contradict that $\mathscr{F}$ is an $\aleph_2$-Lusin family
in $\mathbb{N}^\ast$.
\end{proof}


\end{document}